\newtheorem{definition}{Definition}[section]
\newtheorem{theorem}{Theorem}[section]
\newtheorem{lemma}{Lemma}[section]
\theoremstyle{definition}
\theoremstyle{remark}
\newtheorem{remark}[theorem]{Remark}
\numberwithin{equation}{section}
\newcounter{saveeqn}
\begin{document}

\title[Inverse spectral problem with low regularity refractive index]{Inverse spectral problem with low regularity refractive index}

\author{Kewen Bu}
\address{Department of Mathematics, Jilin University, Changchun, Jilin, China.}\email{bukw22@mails.jlu.edu.cn}
\address{Department of Mathematics, Simon Fraser University, Burnaby, BC, Canada}\email{kba136@sfu.ca}

\author{Youjun Deng}
\address{School of Mathematics and Statistics, Central South University, Changsha, Hunan, China.}
\email{youjundeng@csu.edu.cn}

\author{Yan Jiang}
\address{Department of Mathematics, City University of Hong Kong, Hong Kong SAR, China.}
\email{yjian24@cityu.edu.hk}

\author{Kai Zhang}
\address{Department of Mathematics, Jilin University, Changchun, Jilin, China.}
\email{zhangkaimath@jlu.edu.cn}

\keywords{interior transmission problems; special transmission eigenvalues; low regularity; Sturm-Liouville theory.}

\thanks{}

\date{}

\subjclass[2010]{35P20; 35J25; 35R30; 35J05; 35P25}

\maketitle

\begin{abstract}

This article investigates the unique determination of a radial refractive index $n$ from spectral data. First, we demonstrate that for piecewise twice continuously differentiable functions, $n$ is not uniquely determined by the special transmission eigenvalues associated with radially symmetric eigenfunctions. Subsequently we prove that if $n \in \mathcal{M}$ is twice continuously differentiable functions (or continuously differentiable functions with Lipschitz continuous derivative), then $n$ is uniquely determined on $[0,1]$ by all special transmission eigenvalues when supplemented by partial a priori information on the refractive index.

\end{abstract}


\section{Introduction}\label{sec:Introduction}

The analysis of interior transmission problems has been a subject of considerable interest since their seminal introduction by Colton and Monk (\cite{CM88}). In the ensuing years, these problems have stimulated a rich body of literature dedicated to both their direct analysis—concerning existence, uniqueness, and spectral theory—and their inverse counterparts, which focus on the reconstruction of medium properties (\cite{CCH16,CK19}). Let $ k $ be the wave number,~$k\in\mathbb{C}$ and $ n(x) $ the refractive index, with $ n(x) = 1 $ for $ x \in \mathbb{R}^d \setminus \overline{D} $, where $ d = 2, 3 $, and $ D $ be a domain bounded, simply connected with a piecewise smooth boundary $\partial D$. The associated medium scattering problem is formulated as:
\begin{equation}\label{eq:medium scattering}
	\left\{
	\begin{aligned}
		\Delta u + k^2 n(x) u = 0, \quad x~\text{in} \quad \mathbb{R}^d, \\
		u(x) = \exp(ikx \cdot l) + u^s(x), \\
		\lim_{r \to \infty} r \left( \frac{\partial u^s}{\partial r} - ik u^s \right) = 0,
	\end{aligned}
	\right.
\end{equation}
where $r=|x|$ and $l$ is a unit vector. The inverse medium problem associated with \eqref{eq:medium scattering} is to reconstruct the refractive index $n$ from the far-field pattern of the scattered wave $u^s$. Sampling methods can be used to study the inverse medium problem; however, these methods can only be applied when the kernel of the far-field operator is empty (\cite{CK19}). If the incident pattern lies within the kernel of the far-field operator, the corresponding incident wave does not generate a reflected or scattered wave (\cite{CPS07}). In such cases, there will be no far-field pattern to reconstruct the refractive index.

The interior transmission problem provides a powerful tool for investigating the kernel of the far-field operator. The effectiveness of the linear sampling method for the case when $|\text{Re}(A)|<1$,~where $A$ is a $3\time3$ matrix valued function,~the real part of which describes the physical properties of the medium was established through the interior transmission problem in the foundational work \cite{CH03}. Building on this, Cakoni and collaborators further developed the theory across several key areas, providing a quantitative analysis of how small inclusions perturb eigenvalues \cite{CMR15} and deriving a rigorous electromagnetic variational formulation \cite{CH07}. The interior transmission problem corresponds to the medium scattering problem \eqref{eq:medium scattering} is:
\begin{equation}\label{eq:transmission eigenvalue problem}
	\left\{
	\begin{array}{rlllr}
		\Delta  u + k^2 n(x) u & = 0, &x~ \text{in } D, \\
		\Delta  v + k^2 v & = 0, &x~ \text{in } D, \\
		u & = v, & \text{on } \partial D, \\
		\frac{\partial u}{\partial \nu} & = \frac{\partial v}{\partial \nu}, & \text{on } \partial D, \\
	\end{array}
	\right.
\end{equation}
where $D = B(0, 1)$ is the unit ball and $n$ is a radial function with $n \in L^2[0, 1]$ (cf.\cite{CCH16}). The transmission eigenvalues of the interior transmission problem, which correspond to values of $k$ for which there exist nontrivial solutions $u$ and $v$ to the problem \eqref{eq:transmission eigenvalue problem}, are crucial to investigating the kernel of the far-field operator. These eigenvalues are not only important for studying the medium scattering problem and the inverse medium problem, but also help in solving the interior transmission problem itself. For example, the transmission eigenvalues can be determined from the far-field pattern of the scattered wave $u^s$ in \eqref{eq:medium scattering}, and these eigenvalues can then be used to estimate the refractive index $n(x)$ (\cite{CCH09,CCH10,CPS07}).

During the past decade, both direct and inverse problems associated with interior transmission have received significant attention. The direct problem focuses on the existence and distribution of transmission eigenvalues for a given refractive index $n$. Considerable progress has been made in solving this problem. For spherically stratified media, Colton proved the existence of an infinite discrete set of transmission eigenvalues, each associated with spherically symmetric eigenfunctions.~ It has also been shown that the set of transmission eigenvalues is at most discrete when the domain $D$ is a general bounded region (\cite{CKP89,RS91}). Furthermore, for a general connected region $D$ with a real-valued refractive index function $n$, Sylvester proved the existence of transmission eigenvalues and derived Faber-Krahn-type upper and lower bounds for the first transmission eigenvalue (\cite{PS08}). In \cite{CGH10}, Cakoni established the existence of countably infinite real transmission eigenvalues for isotropic and anisotropic media, addressing both the Helmholtz and Maxwell equations under more general conditions. For general regions $D$, these real eigenvalues are countable and accumulate only at infinity (\cite{CCH16}). This existence result extends to cases where an impenetrable obstacle $D_0$ is embedded within $D$ (\cite{CCH12}). In contrast, results on complex transmission eigenvalues are limited to specific geometries, such as circular or spherical domains (\cite{CL13,CL17,CLM15,LC12}). Alongside these theoretical advances, numerical methods for computing these eigenvalues have also been developed (\cite{AS15, CMS14, GJSX16, LHLL15}).

The inverse interior transmission eigenvalue problem---classified as an inverse spectral problem---concerns the reconstruction of the refractive index $n$ from a given set of transmission eigenvalues, which may be either partial or complete. The majority of established results for this problem pertain to the case where the domain $D$ is a circle in $\mathbb{R}^2$ or a sphere in $\mathbb{R}^3$. Initial research in this area focused on recovering $n$ using the complete spectrum of transmission eigenvalues. The foundational work on this inverse problem was conducted by Mclaughlin and Polyakov (\cite{MP94}), which serves as an inspiration for the present study. Their results were later expanded with additional theoretical and numerical findings by Mclaughlin, Polyakov, and Sacks (\cite{MPS94}). Subsequently, Cakoni proved that the refractive index $n$ is uniquely determined by the entire set of transmission eigenvalues when $D$ is the unit ball in $\mathbb{R}^3$ (\cite{CCG10}); this work also provided theoretical and numerical characterizations of eigenvalue-free zones. Furthermore, Gintides extended this uniqueness result to a broader class of media, demonstrating that it still holds for piecewise twice continuously differentiable radial functions with jump discontinuities, provided the function has well-defined left and right limits and derivatives at the points of discontinuity (\cite{GP17}).

In the context of the inverse spectral problem, partial transmission eigenvalue data have also been employed for the determination of the refractive index $n$. For instance, Cakoni established that in the case of a constant refractive index, the value of $n$ is uniquely determined by the smallest positive transmission eigenvalue alone (\cite{CCG10}). However, for a general (non-constant) refractive index $n$, the smallest positive eigenvalue can only be used to estimate its maximum value (\cite{CPS07}). Concerning the unique determination of a general function $n$, this was first achieved using transmission eigenvalues associated with radially symmetric eigenfunctions, a result established in \cite{AGP11}. Colton later provided a simplified proof of this uniqueness result in \cite{CL13}.

For inverse problems, a central objective is to achieve unique determination of the unknown with minimal information and under the least restrictive constraints. In the context of inverse spectral problems, existing results on determining the refractive index $n$ from partial transmission eigenvalues typically impose strong regularity conditions, such as requiring $n\in C^2[0,1]$ (\cite{CL13}) or $n\in C^1[0,1]$ with a square-integrable second derivative (\cite{AGP11}). The work of Gintides represents a significant relaxation of these smoothness requirements (\cite{GP17}). It considers a two-layer medium, reducing the assumption on $n$ to a two-segment piecewise continuously differentiable functions with prescribed jump conditions at the interface. Nevertheless, this approach is specifically tailored to a medium with exactly two layers, and extending it to a more general multi-layer or arbitrary piecewise smooth structure appears challenging. Furthermore, the uniqueness result in \cite{GP17} relies on knowledge of the entire spectrum of transmission eigenvalues. Motivated by these limitations, we seek to generalize these findings in two key directions: firstly, to arbitrary piecewise twice continuously differentiable ($C_p^2[0,1]$) refractive indices and to those that are continuously differentiable function with a Lipschitz continuous derivative($C^{1,1}[0,1]$); and secondly, to establish uniqueness using only a subset of special transmission eigenvalues coupled with partial a priori information on the refractive index.

This paper aims to relax the regularity requirements imposed on the refractive index $n$ in existing theory. As a novel contribution, we construct a counterexample demonstrating that unique determination of $n$ is impossible using only special transmission eigenvalues; this establishes a fundamental lower bound on the informational data necessary for the inverse problem. Conversely, we prove that supplementing these eigenvalues with partial information on $n$ does guarantee uniqueness. This finding suggests that the role of stringent regularity conditions can be functionally substituted by the a priori knowledge of $n$ on a subinterval, thereby establishing a corresponding upper bound on the required information. A central long-term objective is to narrow the gap between these lower and upper bounds, producing increasingly precise estimates that converge towards the minimal information needed for unique reconstruction. The results herein complement and significantly extend the foundational work found in~\cite{AGP11, CL13, GP17}. Moreover, based on Diophantine approximation, we only required $0<n_*<n(r)<n^{*}$ for $r\in [0,1]$, which is totally different from existing work either $ 0<n<1$ or $n>1$.

The remainder of this paper is structured as follows. In Section 2, we present the necessary preliminary material alongside our main results. Section 3 is devoted to establishing the limits and potential of unique determination. We begin by constructing a counterexample which proves that for a piecewise twice continuously differentiable index $n$, special transmission eigenvalues alone are insufficient for unique determination.  In Section 4, we prove Theorem~\ref{thm:uniqueness}, our central positive result, which asserts that for $n \in C_p^2[0,1] \cap \mathcal{M}$,~$\mathcal{M}$ a function space of piecewise continuous functions in which the difference between any two functions changes sign only finitely many times,~uniqueness is achieved when these special eigenvalues are supplemented with a priori information on $n$. In Section 5, we prove Theorem~\ref{thm:uniqueness_C1}, extending this uniqueness result to the case where $n\in C^{1,1}[0,1] \cap \mathcal{M}$, again using a priori information on $n$. The final section presents the conclusions and outlines potential directions for future work. 


\section{Mathematical setup and summary of major result}\label{sec:major result}

In this section, we provide the mathematical framework for both forward and inverse spectral problems and state the main results.
First of all, we present some fundamental definitions.

 \begin{definition}\label{def:transmission eigenvalue}
	 	A number $ k $ is called a transmission eigenvalue if the equations \eqref{eq:transmission eigenvalue problem} have nontrivial solutions $ u, v \in L^2(D) $ such that $ u - v \in H_0^2(D) $. Here, $ u $,~$ v $ are the eigenfunctions corresponding to the eigenvalue $ k $.
 \end{definition}

 \begin{definition}\label{def:special transmission eigenvalue}
	 A number \( k \) is termed a \emph{special transmission eigenvalue} if it is a transmission eigenvalue corresponding to a radially symmetric eigenfunction for the equations in \eqref{eq:transmission eigenvalue problem}.
	 \end{definition}

\begin{definition}\label{def:space M}
The space $\mathcal{M}$ is defined as the set of all piecewise continuous functions on $[0,1]$ satisfying the following property: for any $f_1, f_2 \in \mathcal{M}$, there exists a partition
\begin{equation*}
0 = x_0 < x_1 < \dots < x_{S-1} < x_S = 1
\end{equation*}
with $S \in \mathbb{N}^+$ such that for each $s = 0, 1, \dots, S-1$,
\begin{equation*}
H\bigl(f_1(x) - f_2(x)\bigr) = H\bigl(f_1(y) - f_2(y)\bigr)
\quad \text{for all } x, y \in (x_s, x_{s+1}).
\end{equation*}
Here $H$ denotes the Heaviside step function.
\end{definition}

 \begin{definition}\label{def:rational linear independent}
	    For $ \{\alpha_j \}_{j=1}^{n}\subset  \mathbb{R}$, if there exist rational numbers $ \{ r_j \}_{j=1}^{n} \subset \mathbb{Q}$, not all zero, such that $\sum_{j=1}^{n} r_j \alpha_j = 0 $, then $\{\alpha_j \}_{j=1}^{n}$ are linearly dependent over $ \mathbb{Q} $. Otherwise, they are linearly independent over $ \mathbb{Q} $.
\end{definition}

Since we employ special transmission eigenvalues, the radially symmetric solutions can be expressed in the form
\begin{equation}\label{eq:radial solutions}
	u = \frac{y}{r}, \quad  v = \frac{y_0}{r},
\end{equation}
and by substituting these expressions into equation \eqref{eq:transmission eigenvalue problem}, we obtain
\begin{equation}\label{eq:original_ode}
	\left\{
	\begin{aligned}
		&y'' + k^2 n(r) y = 0, \quad r \in (0, 1), \\
		&y_0'' + k^2 y_0 = 0, \quad r \in (0, 1), \\
		&y(1) = y_0(1), \\
		&y'(1) = y_0'(1). \\
	\end{aligned}
	\right.
\end{equation}
Here, we assume that $n$ is $C_p^2[0,1]$ (or $C^{1,1}([0,1])$) and $n\in \mathcal{M}$. Using the fundamental solution of the second equation and the last two boundary conditions of (\ref{eq:original_ode}), we can now define the characteristic function $d(k)$, whose zeros correspond to all special transmission eigenvalues (see \cite{CCH16}):
\begin{equation}\label{eq:d(k)}
	d(k) = \det \left|
	\begin{matrix}
		y(1) & -\frac{\sin k}{k} \\
		y'(1) & -\cos k
	\end{matrix}
	\right|.
\end{equation}

In traditional inverse problems, the refractive index is typically assumed to satisfy $n \in C^2[0,1]$, where the Liouville transform serves as a powerful analytical tool to establish uniqueness result. However, in this work, we consider refractive indices possessing either $C_p^2[0,1]$ regularity or $C^{1,1}[0,1]$ smoothness. This broader regularity class necessitates more sophisticated technical tools to establish uniqueness results.

Furthermore, when $n\in C_p^2[0,1]$, we assume an $L$-segment refractive index defined by:
\begin{equation}\label{eq:n_piecewise_C2_def}
	n(r)=\left\{
	\begin{aligned}
		&n_1(r),\quad R_0 \leq r < R_1,\\
		&n_2(r),\quad R_1 \leq r < R_2,\\
		&\vdots\\
		&n_{L}(r),\quad R_{L-1} \leq r \leq R_{L},
	\end{aligned}
	\right.
\end{equation}
where $R_0=0$ and $R_L=1$. Denote by $\hat{\delta}_l=\int_{R_{l-1}}^{R_l}\sqrt{n_{l}(r)}\mathrm{d}r$ for $l=1,2,\cdots,L$. We denote
\begin{equation*}
	\tilde{\beta}_{\mathbf{i}}=1+\sum\limits_{l=1}^{L}(-1)^{i_l}\hat{\delta}_l,
\end{equation*}
where $\mathbf{i} = (i_1, \cdots, i_L)$ is a binary vector (i.e., $i_l \in \{0, 1\}$ for $l = 1, \cdots, L$). Each $\mathbf{i}$ corresponds to a decimal value $d(\mathbf{i}) = i_L \cdot 2^{L-1} + \cdots + i_1 \cdot 2^0 \in \{0, \cdots, 2^L - 1\}$.
To simplify the notation, we reindex the coefficients $\{\alpha_{\mathbf{i}}\}$ by a single subscript $j \in \{1, \cdots, 2^L\}$. The new index $j$ is defined by the transformation:
\begin{equation*}
j(\mathbf{i}) = d(\mathbf{i})+ 1.
\end{equation*}
The reordered sequence is denoted $\{\tilde{\beta}_j\}_{j=1}^{2^L}$.

Let $\varepsilon > 0$, define $\delta_L = \int_{0}^{1} \sqrt{n(r)}\, dr$, and denote
\begin{equation*}
	\varepsilon_0=\max_{1\leq j\leq 2^L}\left|n_{j}(R_{j-1})-n_{j-1}(R_{j-1})\right|.~
\end{equation*}
The main results of this work are as follows.
\begin{theorem}\label{thm:uniqueness}
	Assume that $n\in C_p^2[0,1]$ with $L$ intervals and $n\in \mathcal{M}$. Assume further that on each interval, it satisfies $0 < n_* < n(r) < n^*$,~$ |n'(r)|<\tilde{n}^{*}$,~$|n''(r)|<\tilde{n}^{**}$. Suppose the values $\{\hat{\delta}_l\}_{l=1}^{L}$ and the value of $n$ on $[\alpha, 1]$ are known, and $n(1)\neq 1$. Here, the point $\alpha$ is defined by the relation $\int_{\alpha}^{1} \sqrt{n(t)}  \mathrm{d}t = (\delta_L - 1 + \varepsilon)/2$ when $\delta \geq 1$, and $\alpha =1$ when $\delta_L <1$.
	
	If one of the following conditions holds:
	\begin{enumerate}
		\item[(1)] $\tilde{\beta}_{1}$ and $\tilde{\beta}_{2^L}$ are rationally linearly independent, and $\varepsilon_0\leq \frac{n_{*}M_1}{24M_2^{L-1}}$;
		\item[(2)] $\tilde{\beta}_{1}$ and $\tilde{\beta}_{2^L}$ are rationally linearly dependent, i.e., 
		$\tilde{\beta}_{2^{L}} = \dfrac{\hat{q}}{\hat{p}}\tilde{\beta}_1$ for some integers 
		$\hat{q}, \hat{p} \in \mathbb{Z}$ with $0 < |\hat{q}| < \hat{p}$, and one of the following holds:
         \begin{enumerate}
           \item $\delta_L>1$,~$n_L(1)>1$ (or $\delta_L<1$,~$n_L(1)<1$), and $\varepsilon_0\leq \frac{M_1\widetilde{M}_1n_{*}}{12M_2^{L-1}}$,
           \item $\delta_L<1$,~$n_L(1)>1$ (or $\delta_L>1$,~$n_L(1)<1$), and $\varepsilon_0\leq \frac{n_{*}M_1}{24M_2^{L-1}}$,
         \end{enumerate}
		where
		\begin{equation*}
        \begin{aligned}
        &M_1=\left|\left[n_L(1)\right]^{1/4}-\frac{1}{\left[n_L(1)\right]^{1/4}}\right|, \quad
		M_2=\max\left\{\left(\frac{n_*}{n^*}\right)^{\frac{1}{4}} +\left(\frac{n^*}{n_*}\right)^{\frac{1}{4}},\widetilde{M}_2\right\},\\
        &\widetilde{M}_1=\min\left\{\left|\sin\left(\frac{\hat{p}-|\hat{q}|}{\hat{p}+|\hat{q}|}\pi+2\hat{p}\varepsilon_1\right)\right|,~\left|\sin\left(\frac{\hat{p}-|\hat{q}|}{\hat{p}+|\hat{q}|}\pi-2\hat{p}\varepsilon_1\right)\right|\right\},~\widetilde{M}_2=\left[n_L(1)\right]^{\frac{1}{4}}+\frac{1}{\left[n_L(1)\right]^{\frac{1}{4}}};\\
        \end{aligned}
    	\end{equation*}
	\end{enumerate}
then the special transmission eigenvalues uniquely determine $n$.
\end{theorem}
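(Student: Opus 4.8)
We argue by comparison: suppose $n_1,n_2$ both satisfy the hypotheses and generate the same sequence of special transmission eigenvalues (counted with multiplicity). Since $\{\hat\delta_l\}_{l=1}^{L}$ and the values of $n$ on $[\alpha,1]$ are part of the data, $n_1$ and $n_2$ agree on $[\alpha,1]$, share the same $\delta_L=\sum_l\hat\delta_l$ and the same point $\alpha$, so it suffices to prove $n_1\equiv n_2$ on $[0,\alpha]$. Let $d_1,d_2$ be the characteristic functions \eqref{eq:d(k)}; each is entire and even in $k$, since the solution $y$ of \eqref{eq:original_ode} depends on $k$ only through $k^2$. The first task is a transfer-matrix / WKB analysis of $y$ — which survives the $C_p^2$ regularity — showing that $d_i$ has exponential type $\tilde\beta_1=1+\delta_L$ and a large-$|k|$ expansion as a finite exponential sum over the $2^L$ phases $\tilde\beta_j$; the amplitude attached to $\tilde\beta_1$ is the ``all-transmission'' amplitude, a product of a known boundary factor at $r=1$ built from $n_L(1)^{1/4}\pm n_L(1)^{-1/4}$ (nonzero, of modulus $\ge M_1$, because $n_L(1)\neq1$) with interface factors $\tfrac12\bigl(x_\ell+x_\ell^{-1}\bigr)$, $x_\ell=\bigl(n_{\ell+1}(R_\ell)/n_\ell(R_\ell)\bigr)^{1/4}$, each of modulus at most $M_2/2$, whereas the amplitudes of the intermediate phases each carry a factor $\lesssim\varepsilon_0/n_*$ produced by reflections. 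The role of the bound $\varepsilon_0\le n_*M_1/(24M_2^{L-1})$ (resp. of its variants in case (2)) is to dominate these reflected contributions; together with $n_L(1)\neq1$ this makes $d_i$ of sine type, so that the large eigenvalues are asymptotically simple and $|d_i(k)|\gtrsim e^{\tilde\beta_1|\operatorname{Im}k|}$ away from the zeros.

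Next, $w\mapsto d_i(\sqrt w)$ is entire of order $1/2$ in $w=k^2$, so its Hadamard factorization has genus zero, hence no nontrivial exponential factor. As the two eigenvalue sequences coincide, $d_1/d_2$ and $d_2/d_1$ are both entire, hence $d_1/d_2=e^{g}$ with $g$ entire; the lower bound just noted forces $e^{g}$ to be bounded, so it is constant and $d_1=c\,d_2$. Determining $c$ (and the normalization it will carry) is where the dichotomy (1)/(2) enters: one matches the contributions of the two extreme phases $\tilde\beta_1$ and $\tilde\beta_{2^L}$ in the asymptotics of $d_1$ and $d_2$ along a sequence $k_m\to\infty$. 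When $\tilde\beta_1,\tilde\beta_{2^L}$ are rationally independent, Kronecker/Weyl equidistribution lets one steer $k_m\tilde\beta_1$ and $k_m\tilde\beta_{2^L}$ to prescribed residues independently; when they are rationally dependent the two exponentials recur in lockstep, and one must instead use the sign information ($\delta_L\gtrless1$ versus $n_L(1)\gtrless1$) together with the explicit lower bound $\widetilde M_1>0$ — positive precisely because $\tfrac{\hat p-|\hat q|}{\hat p+|\hat q|}=\min\{\delta_L,1/\delta_L\}$ is not an integer. I expect this rationally-dependent case, with the accompanying bookkeeping of the $2^L$ reflection paths behind the $\varepsilon_0$-bounds, to be the main technical obstacle.

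Granting $d_1=c\,d_2$, I would bring in the remaining a priori data. Since $n_1=n_2$ on $[\alpha,1]$, the transfer matrix of \eqref{eq:original_ode} from $\alpha$ to $1$ is common to both, so $d_i(k)=p(k)\,y_i(\alpha;k)+q(k)\,y_i'(\alpha;k)$ for $i=1,2$, with $p,q$ \emph{known} entire functions having no common zero (the transfer matrix has determinant one, and $\cos k$ and $\sin k/k$ never vanish simultaneously). Subtracting, $p\,G+q\,H\equiv0$, where $G=y_1(\alpha;\cdot)-c\,y_2(\alpha;\cdot)$ and $H=y_1'(\alpha;\cdot)-c\,y_2'(\alpha;\cdot)$ both have exponential type at most $\int_0^\alpha\sqrt n$. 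Because $n_L(1)\neq1$, the dominant exponential of $q$ has nonzero coefficient, so $q$ is of sine type; and by the very choice of $\alpha$ — this is the purpose of the parameter $\varepsilon$ — its exponential type strictly exceeds $\int_0^\alpha\sqrt n$. Since $p$ and $q$ share no zero, $G$ vanishes at every zero of $q$; but a sine-type function has more zeros per unit length than any entire function of the smaller type of $G$ can carry, so $G\equiv0$, and then $q\not\equiv0$ gives $H\equiv0$. Letting $k\to0$ in $y_1(\alpha;k)=c\,y_2(\alpha;k)$ and using $y_i(0)=0$, $y_i'(0)=1$ yields $c=1$; thus the solution of \eqref{eq:original_ode} normalized at $r=0$ has the same Cauchy data $\bigl(y(\alpha;k),\,y'(\alpha;k)\bigr)$ for $n_1$ and for $n_2$ at every $k$.

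It remains to recover $n$ on $[0,\alpha]$ from this identity of Cauchy data (equivalently, of the Weyl $m$-functions at $\alpha$). For a $C^2$ index this is classical Borg--Marchenko uniqueness, but at $C_p^2$ regularity the Liouville transform is not available, so instead I would peel $n$ off layer by layer from $r=\alpha$ inward: the large-$k$ expansion of $y_i(\alpha;k)$ along the imaginary axis has coefficients that localize near $r=\alpha$, down to the first jump of $n_i$, and reading them off recovers $n$ together with its one-sided derivatives just inside $\alpha$; one then transfers the identity across that layer and repeats. Here $n_i\in\mathcal M$ is essential: since $n_1-n_2$ changes sign only finitely often, if $n_1\not\equiv n_2$ there would be a well-defined innermost interval of disagreement on which $n_1-n_2$ keeps a fixed sign, and a Sturm-type comparison would force $y_1(\alpha;k)\neq y_2(\alpha;k)$ for suitable large $k$, contradicting $G\equiv0$. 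Combined with $n_1=n_2$ on $[\alpha,1]$, this gives $n_1\equiv n_2$ on $[0,1]$.
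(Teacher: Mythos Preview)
Your route is genuinely different from the paper's, and although the overall architecture could be made to work, you are taking a detour around the paper's main shortcut and leaving the last step underspecified.

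The paper never writes $d_1=c\,d_2$ via Hadamard, never factors through Cauchy data at $r=\alpha$, and never invokes Borg--Marchenko or layer peeling. Instead it introduces the single entire function
\[
f(k)=y_1'(1)y_2(1)-y_1(1)y_2'(1)=k^2\int_0^{\alpha}\bigl(n^{(2)}-n^{(1)}\bigr)\,y_1 y_2\,dr,
\]
which vanishes at every special transmission eigenvalue (both pairs $(y_i(1),y_i'(1))$ are proportional to $(\sin k/k,\cos k)$ there), so $g=f/d$ is entire. The entire Kronecker/Diophantine dichotomy you describe is then used \emph{not} to determine a constant $c$, but to build a family of contours $\gamma_s$ (two vertical segments $\Re k=\pm t_s$ joined by arcs of radius $(N_s+\tfrac12)\pi/\tilde\beta_1$) on which one has the uniform lower bound $|d(k)|\gtrsim|k|^{-1}e^{\tilde\beta_1|\Im k|}$. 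Combined with $|f(k)|\lesssim e^{(\tilde\beta_1-\varepsilon)|\Im k|}$, which holds precisely because of the definition of $\alpha$, this forces $|g(k)|\lesssim|k|e^{-\varepsilon|\Im k|}$ on each $\gamma_s$, hence (maximum modulus, then Liouville) $g$ is linear, and sampling $k=iN_s$ kills it. So $f\equiv0$, i.e.\ $\int_0^{\alpha}(n^{(2)}-n^{(1)})y_1y_2=0$ for all $k$, and a one-line appeal to Ramm's Property~C (the paper's Lemma~4.4, valid exactly under $n^{(i)}\in\mathcal{M}$) gives $n^{(1)}\equiv n^{(2)}$.

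Relative to this, your proposal has two soft spots. First, the sine-type lower bounds you invoke for $d_i$ and for $q$ are exactly the content of the paper's contour construction; asserting ``$d_i$ is of sine type'' hides the real work, and the role you assign to the $(1)/(2)$ split (``determining $c$'') mislocates where Kronecker is actually needed. Second, your endgame (``peel off layer by layer'', ``Sturm-type comparison'') is vague at $C_p^2$ regularity; the paper sidesteps this entirely because the Wronskian identity delivers the integral condition that Property~C consumes directly. If you want to salvage your route, note that once you have $y_1(\alpha;k)=y_2(\alpha;k)$ and $y_1'(\alpha;k)=y_2'(\alpha;k)$ for all $k$, the same Wronskian computation on $[0,\alpha]$ gives $\int_0^{\alpha}(n^{(2)}-n^{(1)})y_1y_2=0$ immediately, and then Property~C finishes --- so the layer-by-layer argument is unnecessary.
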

The schematic diagram of the refractive index in Theorem \ref{thm:uniqueness} is shown in Figure 1.
\begin{figure}[htbp]\label{fig:n}
  \centering
  \includegraphics[width=0.8\textwidth]{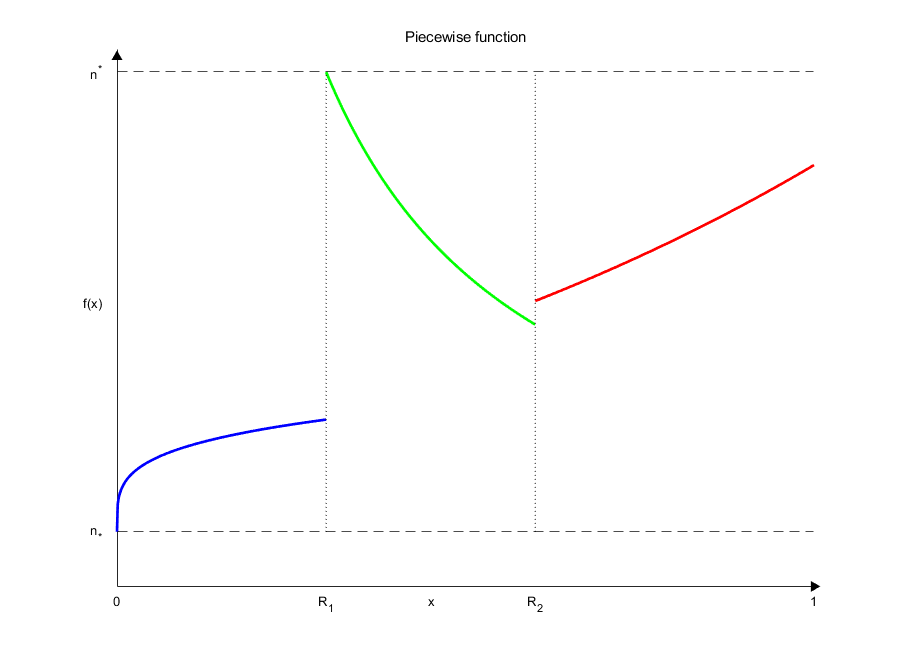}
  \caption{Diagram of the refractive index in Theorem 2.1.}
\end{figure}

\begin{theorem}\label{thm:uniqueness_C1}
	Assume that $n \in C^{1,1}[0,1] \cap \mathcal{M}$ and that it satisfies the bounds
	$0 < n_* < n(r), n'(r)< n^*$ for all $r \in [0,1]$. If the profile of $n$ on the interval $[\alpha, 1]$ are known, where $\alpha$ is as defined in Theorem~\ref{thm:uniqueness}, and if $n$ satisfies the boundary conditions $n(1) \neq 1$ and $n'(1) = 0$, then the special transmission eigenvalues uniquely determine $n$.
\end{theorem}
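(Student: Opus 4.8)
The plan is to reduce Theorem~\ref{thm:uniqueness_C1} to (a suitable special case of) Theorem~\ref{thm:uniqueness}, treating the $C^{1,1}[0,1]$ index as a ``single-segment'' piecewise-$C^2$ profile and exploiting the extra boundary constraint $n'(1)=0$ to suppress the first-order discontinuity term that would otherwise obstruct the argument. First I would set $L=1$, so that $\hat\delta_1=\delta_L=\int_0^1\sqrt{n(r)}\,\mathrm{d}r$, and observe that with $L=1$ there is no interface, hence $\varepsilon_0=0$ and every smallness hypothesis on $\varepsilon_0$ in Theorem~\ref{thm:uniqueness} is vacuously satisfied. The binary vector $\mathbf i$ has only two values, giving $\tilde\beta_1=1+\hat\delta_1=1+\delta_L$ and $\tilde\beta_{2}=1-\hat\delta_1=1-\delta_L$ (after the reindexing $j(\mathbf i)=d(\mathbf i)+1$). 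Thus the dichotomy in Theorem~\ref{thm:uniqueness} becomes the dichotomy of whether $1+\delta_L$ and $1-\delta_L$ are rationally dependent, i.e.\ whether $\delta_L\in\mathbb{Q}$; in either branch the corresponding numerical conditions reduce to constraints that are either empty (because $\varepsilon_0=0$) or controlled by $M_1=\bigl|[n(1)]^{1/4}-[n(1)]^{-1/4}\bigr|>0$, which is positive precisely because $n(1)\neq1$.

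Next I would justify why the $C^{1,1}$ regularity together with $n'(1)=0$ lets us run the same asymptotic analysis of $d(k)$ used in the proof of Theorem~\ref{thm:uniqueness}. The key point is the large-$k$ expansion of the solution $y(r;k)$ of $y''+k^2n(r)y=0$: for $n\in C^{1,1}$ the WKB/Liouville-Green approximation furnishes
\begin{equation*}
y(1;k)=\frac{1}{[n(1)]^{1/4}}\sin\!\Big(k\int_0^1\sqrt{n(t)}\,\mathrm{d}t\Big)+O(k^{-1}),\qquad
y'(1;k)=k[n(1)]^{1/4}\cos\!\Big(k\delta_L\Big)+O(1),
\end{equation*}
where the absence of a $k^{0}$-order oscillatory correction in $y(1;k)$ is exactly what $n'(1)=0$ buys us (the first correction term in the Liouville-Green series carries a factor proportional to $n'$ evaluated at the endpoint). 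Substituting into $d(k)=\det\big(\begin{smallmatrix} y(1)&-\sin k/k\\ y'(1)&-\cos k\end{smallmatrix}\big)$ yields, to leading order,
\begin{equation*}
k\,d(k)=\Big([n(1)]^{1/4}-[n(1)]^{-1/4}\Big)\sin\!\big(k(1+\delta_L)/2\cdot\text{-}\big)\cdots+O(k^{-1}),
\end{equation*}
so that the zero set of $d$ encodes the phases $1\pm\delta_L$ in precisely the way Theorem~\ref{thm:uniqueness} exploits, and the hypothesis $n(1)\neq1$ guarantees the leading amplitude $M_1$ does not vanish.

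From here I would invoke the machinery behind Theorem~\ref{thm:uniqueness} verbatim. The special transmission eigenvalues (zeros of $d(k)$) determine, via their counting function / density, the quantity $\delta_L$ and, by the finer asymptotics, the phase data that pin down $\int_\alpha^1\sqrt n$ once $n$ is known on $[\alpha,1]$. Since $\alpha$ is defined so that $\int_\alpha^1\sqrt{n(t)}\,\mathrm{d}t=(\delta_L-1+\varepsilon)/2$ (or $\alpha=1$ when $\delta_L<1$), knowledge of $n|_{[\alpha,1]}$ plus $\delta_L$ reduces the problem to reconstructing $n$ on $[0,\alpha]$ from the remaining eigenvalue information; the condition $n\in\mathcal{M}$ (difference of any two candidates changes sign finitely often) then upgrades ``same eigenvalues'' to ``same index'' by the sign-counting/oscillation argument of Section~4, now applied on $[0,\alpha]$ without any interface terms. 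The main obstacle I anticipate is not conceptual but bookkeeping: one must check that the error terms $O(k^{-1})$, $O(1)$ in the Liouville-Green expansion are genuinely uniform for $n$ ranging over the admissible class $0<n_*<n,n'<n^*$ with a uniform Lipschitz bound on $n'$, so that the asymptotic identification of the eigenvalue phases is stable; and one must verify that dropping to $L=1$ does not trivialize any hypothesis in a way that breaks the logical chain (in particular that $M_1>0$, i.e.\ $n(1)\neq1$, is exactly the surviving nondegeneracy condition, and that $n'(1)=0$ is precisely what removes the $\varepsilon_0$-type obstruction that a genuine jump in $n'$ would create). Once this uniformity is in place, Theorem~\ref{thm:uniqueness_C1} follows as the $L=1$, $\varepsilon_0=0$ instance of Theorem~\ref{thm:uniqueness}.
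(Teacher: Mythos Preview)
Your overall strategy---viewing the $C^{1,1}$ case as the single-segment ($L=1$) instance of the piecewise-$C^2$ framework, so that $\tilde\beta_1=1+\delta_L$, $\tilde\beta_2=1-\delta_L$, $\varepsilon_0=0$, and the only surviving nondegeneracy is $M_1>0$ (i.e.\ $n(1)\neq1$)---is exactly the structure the paper exploits. The paper likewise reduces to a two-term dominant expansion $D(d(k))=\tilde A_1\sin((1+\delta)k)/k+\tilde A_2\sin((1-\delta)k)/k$, then reruns the Kronecker/contour argument of Section~4 in the rationally independent and dependent cases.

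There is, however, a genuine gap in your plan. You cannot literally invoke Theorem~\ref{thm:uniqueness} (or ``its machinery verbatim''): that theorem, and the Lemmas~\ref{lem:y,y'}--\ref{lem:main_term_d(k)} feeding it, assume $n\in C^2_p[0,1]$, which is used through the Liouville transform and the potential $p(\xi)=n''/(4n^2)-5(n')^2/(16n^3)$. A $C^{1,1}$ function need not be piecewise $C^2$---$n''$ is only $L^\infty$---so these lemmas do not apply as stated. The paper closes this gap by first proving a characterisation lemma (Lemma~\ref{lem:C2 approximate C1}): $f\in C^{1,1}$ iff $f$ is a uniform limit of $C^2$ functions with \emph{uniformly} bounded first and second derivatives (via mollification of $f'$ and Arzel\`a--Ascoli for the converse). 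The uniform derivative bounds make the constants in the P\"oschel--Trubowitz estimates uniform along the approximating sequence, and passing to the limit yields the needed WKB asymptotics for $y$ and $y'$ directly for $C^{1,1}$ indices (Lemma~\ref{lem:estimation solution C1}). Only after this do they compute $D(d(k))$ and rerun the contour argument. Your proposal flags ``bookkeeping'' about uniformity of the $O$-terms but supplies no mechanism; the mollification/approximation step is the missing idea.

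Two smaller points. First, your asymptotic formulas for $y(1;k)$ and $y'(1;k)$ are off by a factor of $k$ and by $[n(0)]^{-1/4}$; compare with the paper's Lemma~\ref{lem:estimation solution C1}. Second, your explanation of the role of $n'(1)=0$---that it kills an $\varepsilon_0$-type interface obstruction or a leading WKB correction---does not match what happens: with $L=1$ there is no interface, and the $O(1/|k|)$ remainder for $y'(1)$ in Lemma~\ref{lem:estimation solution C1} already suffices for the $O(1/|k|^2)$ bound on $R(d(k))$ without using $n'(1)=0$. In the paper's proof this boundary condition is not invoked in any visible step, so you should not build your argument around that interpretation.
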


\begin{remark}\label{rem:low regularity counterexample}
If $n$ is a piecewise continuous function, then the special transmission eigenvalues cannot uniquely determine $n$. This demonstrates that reduced regularity of the refractive index increases the amount of data required for its unique determination in the inverse spectral problem. We will prove this claim in the next section by constructing an explicit counterexample.
\end{remark}

\begin{remark}\label{rem:theorem uniqueness_C1}
	In Theorem~\ref{thm:uniqueness}, we consider refractive indices that $n\in C_p^2[0,1]$. This mathematical formulation corresponds to multilayer structures commonly encountered in engineering applications, such as optical coatings or acoustic filters, thereby lending the result significant practical relevance. In practice, however, refractive indices may exhibit more complex variations. Notably, reduced smoothness requirements for the refractive index broaden the applicability of uniqueness results to more general scenarios. Theorem~\ref{thm:uniqueness_C1} extends this line of investigation by relaxing the smoothness requirement to $C^{1,1}[0,1]$ refractive indices.

The reduction in smoothness of the refractive index implies, mathematically speaking, that less spectral information is available for the inverse problem. Consequently, uniquely determining the refractive index necessarily requires additional data. In our theorem, the requirement that the refractive index exhibits only small jumps across layer interfaces corresponds physically to the condition that adjacent layers have relatively close material properties---an assumption naturally satisfied in many practical engineering applications.
\end{remark}

In the remainder of this section, we recall a classical result from Diophantine approximation that will be essential for our subsequent arguments.
\begin{lemma}[{\cite{HW80}}]\label{lem:Kronecker_approximation}
	If $\{\mathfrak{v}_j\}_{j=1}^{n}$ are linearly independent over $\mathbb{Q}$, and $\{\alpha_j\}_{j=1}^{n}$, $T$, and $\varepsilon_1$ are positive real numbers, then there exists a real number $t > T$ and integers $\{p_j\}_{j=1}^{n}$ such that
	\begin{equation*}
		\left| t \mathfrak{v}_j - p_j - \alpha_j \right| < \varepsilon_1, \quad j = 1, 2, \cdots, n.
	\end{equation*}
\end{lemma}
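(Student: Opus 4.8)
The plan is to recognize this as Kronecker's approximation theorem in its continuous (real-parameter) inhomogeneous form and to derive it from the density of a linear flow on the torus. Writing $\mathbf{v} = (\mathfrak{v}_1, \dots, \mathfrak{v}_n)$ and $\boldsymbol{\alpha} = (\alpha_1, \dots, \alpha_n)$, the claim is equivalent to the assertion that the one-parameter family $\{\, t\mathbf{v} \bmod \mathbb{Z}^n : t > T \,\}$ comes within $\varepsilon_1$ of the point $\boldsymbol{\alpha} \bmod \mathbb{Z}^n$ in the torus $\mathbb{T}^n = \mathbb{R}^n/\mathbb{Z}^n$; the integers $p_j$ are then exactly the components of the $\mathbb{Z}^n$-representative of $t\mathbf{v} - \boldsymbol{\alpha}$. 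Thus it suffices to prove that the orbit $\{\, t\mathbf{v} \bmod \mathbb{Z}^n : t \in \mathbb{R} \,\}$ is dense in $\mathbb{T}^n$, and then to upgrade density to the restricted range $t > T$.

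First I would reduce density to a statement about characters. The closure $H$ of the orbit is a closed subgroup of the compact abelian group $\mathbb{T}^n$, so by Pontryagin duality $H = \mathbb{T}^n$ if and only if every character of $\mathbb{T}^n$ trivial on $H$ is itself trivial. The characters are $\chi_{\mathbf{m}}(x) = e^{2\pi i\, \mathbf{m}\cdot x}$ indexed by $\mathbf{m} \in \mathbb{Z}^n$, and $\chi_{\mathbf{m}}$ restricted to the orbit is $t \mapsto e^{2\pi i (\mathbf{m}\cdot\mathbf{v}) t}$, which is identically $1$ precisely when $\mathbf{m}\cdot\mathbf{v} = 0$. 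The hypothesis that $\mathfrak{v}_1, \dots, \mathfrak{v}_n$ are linearly independent over $\mathbb{Q}$ says exactly that $\mathbf{m}\cdot\mathbf{v} = 0$ forces $\mathbf{m} = 0$; hence the only annihilating character is trivial, and $H = \mathbb{T}^n$.

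Alternatively, and in a manner closer to the elementary development in the cited reference, I would establish quantitative equidistribution of $t \mapsto t\mathbf{v} \bmod \mathbb{Z}^n$ via Weyl's criterion: for each nonzero $\mathbf{m}\in\mathbb{Z}^n$,
\[
\frac{1}{S}\int_0^S e^{2\pi i (\mathbf{m}\cdot\mathbf{v}) t}\, dt
= \frac{e^{2\pi i (\mathbf{m}\cdot\mathbf{v}) S} - 1}{2\pi i (\mathbf{m}\cdot\mathbf{v})\, S} \longrightarrow 0
\qquad (S \to \infty),
\]
the denominator being nonzero precisely by linear independence over $\mathbb{Q}$. Weyl's criterion then yields equidistribution with respect to Lebesgue measure on $\mathbb{T}^n$, which is strictly stronger than density and, crucially, supplies the $t > T$ refinement for free: since the orbit spends a set of times of positive asymptotic density inside any fixed ball $B(\boldsymbol{\alpha}, \varepsilon_1)$ about $\boldsymbol{\alpha} \bmod \mathbb{Z}^n$, it must enter that ball for some $t$ exceeding any prescribed $T$. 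Unwinding the membership $t\mathbf{v} \bmod \mathbb{Z}^n \in B(\boldsymbol{\alpha}, \varepsilon_1)$ produces the integers $p_j$ and the inequalities $|t\mathfrak{v}_j - p_j - \alpha_j| < \varepsilon_1$.

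The essential obstacle is the implication from arithmetic (rational linear independence) to dynamics (density and equidistribution); everything else is bookkeeping. In the character formulation this obstacle is packaged into the equivalence ``$\mathbf{m}\cdot\mathbf{v}=0 \Rightarrow \mathbf{m}=0$'', while in the Weyl-sum approach it appears as the nonvanishing of $\mathbf{m}\cdot\mathbf{v}$ in the denominator above. I would also record that the continuous parameter $t\in\mathbb{R}$ is exactly what allows the hypothesis to omit the number $1$ from the independent set: were $t$ restricted to integers, the relevant exponentials would be $t\mapsto e^{2\pi i(\mathbf{m}\cdot\mathbf{v}+m_0)t}$ with an extra integer frequency $m_0$, and one would instead need $1,\mathfrak{v}_1,\dots,\mathfrak{v}_n$ to be linearly independent over $\mathbb{Q}$.
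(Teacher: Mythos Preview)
The paper does not supply its own proof of this lemma: it is stated with a citation to Hardy and Wright \cite{HW80} and used as a black box throughout Sections~\ref{sec:thm2.1} and~5. Your proposal is a correct and standard derivation of the continuous Kronecker approximation theorem via equidistribution of the linear flow on $\mathbb{T}^n$ (equivalently, via Pontryagin duality on the orbit closure), and your closing remark about why the hypothesis need not include $1$ among the independent set is exactly the point that distinguishes the real-parameter version used here from the integer-parameter version.
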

By virtue of this lemma, we need only impose the condition $0 < n_* < n(r) < n^{*}$ for $r \in [0,1]$, in contrast to previous approaches that assume either $0 < n < 1$ or $n > 1$ (see \cite{CCH16,CL13}).


\section{A counterexample of uniqueness and preliminaries}\label{sec:A counterexample and preliminaries}


\subsection{A counterexample of uniqueness}\label{sec:counterexample}

First, we illustrate Remark\,\ref{rem:low regularity counterexample} with an explicit counterexample.

Consider the case where the function $n$ is piecewise constant with two segments and a discontinuity at $R_1$; that is,	
\begin{equation*}
	n(r) = \begin{cases}
		n_1, & 0 \leq r < R_1, \\
		n_2, & R_1 \leq r < 1,
	\end{cases}
	\nonumber
\end{equation*}
where $n_1$ and $n_2$ are two given constants. In this case, the solution to the governing equation
\begin{equation*}
	\left\{
	\begin{aligned}
		y'' + k^2 n y = 0, & \quad r \in (0, 1), \\
		y(0) = 0, & \quad y'(0) = 1,
	\end{aligned}
	\nonumber
	\right.
\end{equation*}
can be expressed as
\begin{equation}\label{eq:piecewise_constant_ode_solution}
	y(r) = \begin{cases}
	\displaystyle	\frac{\sin(k\sqrt{n_1}r)}{k\sqrt{n_1}}, & 0 \leq r < R_1, \\
	\displaystyle	b_{21} \frac{\sin(k\sqrt{n_2}(r - R_1))}{k\sqrt{n_2}}
            + b_{22} \cos(k\sqrt{n_2}(r - R_1)), & R_1 \leq r < 1.
	\end{cases}
\end{equation}	
The continuity of $y$ at $R_1$ imposes the condition:
\begin{equation}\label{eq:linear_combination_coefficients}
		b_{21} = \cos(k\sqrt{n_1} R_1), \quad
		b_{22} = \frac{\sin(k\sqrt{n_1} R_1)}{k\sqrt{n_1}}.
\end{equation}
Substitution of \eqref{eq:piecewise_constant_ode_solution} and \eqref{eq:linear_combination_coefficients} into \eqref{eq:d(k)} yields	
\begin{eqnarray}\label{eq:piecewise_constant_d(k)}
		d(k) &=& \left\{ \cos(k\sqrt{n_1} R_1) \cos(k\sqrt{n_2}(1 - R_1)) + \frac{\sqrt{n_2} \sin(k\sqrt{n_1} R_1)}{\sqrt{n_1}} \sin(k\sqrt{n_2}(1 - R_1)) \right\} \frac{\sin k}{k} \nonumber\\
		&- & \left\{ \cos(k\sqrt{n_1} R_1) \frac{\sin(k\sqrt{n_2}(1 - R_1))}{k\sqrt{n_2}} + \frac{\sin(k\sqrt{n_1} R_1)}{k\sqrt{n_1}} \cos(k\sqrt{n_2}(1 - R_1)) \right\} \cos k.
\end{eqnarray}

We now define two specific piecewise constant functions:	
\begin{equation}\label{eq:n^{(1)},n^{(2)}}
	n^{(1)}(r) = \begin{cases}
		4, & 0 \leq r < \frac{1}{2}, \\
		16, & \frac{1}{2} \leq r \leq 1,
	\end{cases}
	\quad
	n^{(2)}(r) = \begin{cases}
		16, & 0 \leq r < \frac{1}{2}, \\
		4, & \frac{1}{2} \leq r \leq 1.
	\end{cases}
\end{equation}
Let $d^{(1)}(k)$ and $d^{(2)}(k)$ denote the characteristic functions corresponding to $n^{(1)}$ and $n^{(2)}$, respectively. Substituting the expressions from \eqref{eq:n^{(1)},n^{(2)}} into \eqref{eq:piecewise_constant_d(k)} yields
\begin{equation*}
	d^{(1)}(k) = 3 d^{(2)}(k)  = \frac{9}{16k} (\sin(2k) + \sin(4k)).
\end{equation*}
This relation implies that the zeros of $d^{(1)}(k)$ and $d^{(2)}(k)$ are identical. By the definition in \eqref{eq:d(k)}, this means the associated transmission eigenvalues for $n^{(1)}$ and $n^{(2)}$ also coincide. This provides an explicit counterexample demonstrating the non-uniqueness of the solution of inverse problem.

This non-uniqueness shows that relaxing the regularity of $n$ to piecewise constant prevents its unique determination from special transmission eigenvalues. Therefore, uniquely reconstructing $n$ necessitates additional information.


\subsection{Liouville transform}\label{sec:Liouville transform}

We now introduce the Liouville transform on each interval. Given that $n$ is defined by \eqref{eq:n_piecewise_C2_def}, the solution to \eqref{eq:original_ode} admits the representation
\begin{equation}\label{eq:general_form_ode_solution_y}
	y(r)=\left\{
	\begin{aligned}
		&b_1 g_1(r), \quad R_0 \leq r < R_1, \\
		&b_{l1} g_{l1}(r) + b_{l2} g_{l2}(r), \quad  R_{l-1} \leq r< R_l, \quad 2 \leq l \leq L-1,\\
		&b_{L1} g_{L1}(r) + b_{L2} g_{L2}(r), \quad  R_{L-1} \leq r \leq R_L,
	\end{aligned}
	\right.
\end{equation}
where the functions $g_{l1}$ and $g_{l2}$ are defined as the solutions on the interval $(R_{l-1}, R_l)$ of the initial value problems
\begin{equation}\label{eq:ode_y_n_j}
	\left\{
	\begin{aligned}
		&y'' + k^2 n_l(r) y = 0, \quad  R_{l-1} <r< R_l, \\
		&y(R_{l-1}) = 0, \quad y'(R_{l-1}) = 1,
	\end{aligned}
	\right.
	\quad \mbox{and} \quad
	\left\{
	\begin{aligned}
		&y'' + k^2 n_l(r) y = 0, \quad  R_{l-1} < r< R_l, \\
		&y(R_{l-1}) = 1, \quad y'(R_{l-1}) = 0,
	\end{aligned}
	\right.
\end{equation}
respectively.

To simplify these expressions, we apply the Liouville transform. Let $\delta_0 = 0$ and define Liouville transform for each layer $l=1,2,\cdots,L$:
\begin{equation}\label{eq:Liouville transform}
	\left\{
	\begin{aligned}
		&\xi_l(r) = \int_{R_{l-1}}^r \sqrt{n_l(r)}\,dr+\delta_{l-1} , \quad R_{l-1} \leq r < R_l,\\
		&z(\xi_l(r)) = [n_l(r)]^{\frac{1}{4}} y(r), \quad \delta_{l-1} \leq \xi_l < \delta_l,\\
		&\delta_l = \delta_{l-1} + \int_{R_{l-1}}^{R_l} \sqrt{n_l(r)} \, dr.
	\end{aligned}
	\right.
\end{equation}
Applying these transformations to the equations in \eqref{eq:ode_y_n_j}, the solution \eqref{eq:general_form_ode_solution_y} becomes
\begin{equation}\label{eq:original_ode_solution_y}
	y(r) =
	\left\{
	\begin{aligned}
		&b_1 \frac{z_1(\xi_1(r))}{[n_{1}(r)]^{\frac{1}{4}}}, \quad R_0 \leq r< R_1, \\
		&b_{l1} \frac{z_{l1}(\xi_l(r))}{[n_l(r)]^{\frac{1}{4}}} +b_{l2}\frac{z_{l2}(\xi_l(r))}{[n_l(r)]^{\frac{1}{4}}},  \quad R_{l-1} \leq r<R_l, \quad 2 \leq l \leq L-1, \\
		&b_{L1} \frac{z_{L1}(\xi_L(r))}{[n_L(r)]^{\frac{1}{4}}} +b_{L2}\frac{z_{L2}(\xi_L(r))}{[n_L(r)]^{\frac{1}{4}}},  \quad R_{L-1} \leq r \leq R_L.
	\end{aligned}
	\right.
\end{equation}
Here, $z_{l1}$ and $z_{l2}$ are solutions to the transformed differential equation on $(\delta_{l-1}, \delta_l)$:
\begin{equation*}\label{eq:basis_for_z_j}
	z'' + \left(k^2 - p(\xi_l)\right) z = 0,
\end{equation*}
with the potential $p(\xi_l)$ given by
\begin{equation*}
p(\xi_l) = \frac{n_l''(r)}{4n_l^2(r)} - \frac{5[n_l'(r)]^2}{16n_l^3(r)}, \quad R_{l-1} \leq r<R_l,
\end{equation*}
and satisfy the initial conditions:
\begin{align*}
	&\text{For $z_{l1}$:} & z(\delta_{l-1}) &= 0, & z'(\delta_{l-1}) &= n_l^{-1/4}(R_{l-1}); \\
	&\text{For $z_{l2}$:} & z(\delta_{l-1}) &= n_l^{1/4}(R_{l-1}), & z'(\delta_{l-1}) &= \dfrac{n_l'(R_{l-1})}{4n_l^{3/2}(R_{l-1})}.
\end{align*}
We note that the length of the transformed interval is $\hat{\delta}_l =(\delta_l - \delta_{l-1})$ for $l = 1, \cdots, L$. Furthermore, it follows from \cite{PT87} that
\begin{equation}\label{eq:z1z2 dom}	
	\left\{
	\begin{aligned}
		&z_1(\xi_1)= \frac{1}{[n_{1}(R_{0})]^{\frac{1}{4}}k}\sin{(\xi_1)}+O\left(\frac{1}{|k|^2}\right),\\
		&z_{l1}(\xi_l)= \frac{1}{[n_{l}(R_{l-1})]^{\frac{1}{4}}k}\sin{(\xi_{l}-\delta_{l-1})}+O\left(\frac{1}{|k|^2}\right), \quad 2 \leq l \leq L, \\
		&z_{l2}(\xi_l)= [n_{l}(R_{l-1})]^{\frac{1}{4}}\cos{(\xi_{l}-\delta_{l-1})}+O\left(\frac{1}{|k|}\right), \quad 2 \leq l \leq L.
	\end{aligned}
	\right.
\end{equation}


\subsection{Key functions}\label{sec:key functions}

We introduce a key relation that will be used in the subsequent proofs of Theorems \ref{thm:uniqueness} and \ref{thm:uniqueness_C1}.

Suppose two distinct refractive indices, $n^{(1)}$ and $n^{(2)}$, yield the same set of special transmission eigenvalues. Then, for a given eigenvalue $k$, the corresponding functions $y_1$ and $y_2$ (for $n^{(1)}$ and $n^{(2)}$, respectively) satisfy the following system for $j = 1, 2$:
\begin{equation}\label{eq:y_1,y_2a}
	\left\{
	\begin{aligned}
		y_j^{\prime\prime} + k^{2} n^{(j)}(r) y_j &= 0, \quad &0 < r < 1,\\
		y_0^{\prime\prime} + k^{2} y_0 &= 0, \quad &0 < r < 1,\\
		y_j(1) &= y_0(1), \\
		y_j'(1) &= y_0'(1).
	\end{aligned}
	\right.
\end{equation}
Multiplying the first equation for $j=1$ by $y_2$ and the first equation for $j=2$ by $y_1$, then subtracting the results yields:
\begin{equation*}
	\begin{aligned}
		y_1''(r) y_2(r) - y_1(r) y_2''(r) &= k^2 \left[n^{(2)}(r) - n^{(1)}(r)\right] y_1(r) y_2(r).
	\end{aligned}
\end{equation*}
Integrating both sides from $0$ to $1$ gives:
\begin{equation*}
	\begin{aligned}
		k^2 \int_{0}^{1} \left[n^{(2)}(r) - n^{(1)}(r)\right] y_1(r) y_2(r) \, \mathrm{d} r &= \left\{y_1'(1)y_2(1) - y_1(1)y_2'(1)\right\} - \left\{y_1'(0)y_2(0) - y_1(0)y_2'(0)\right\} \text{.}
	\end{aligned}
\end{equation*}
From the priori condition that the values of $n(r)$ are given for $r \in (\alpha,1)$, it follows that $n_1(r)=n_2(r)$ for $r \in (\alpha,1)$. Therefore, the upper limit of the integral on the left-hand side should be $\alpha$.
The right-hand side is the Wronskian evaluated at the endpoints. Since the solution $y$ of the transmission eigenvalue problem \eqref{eq:transmission eigenvalue problem} belongs to $H^1(D)$, its radial representation \eqref{eq:radial solutions} implies $y_1(0) = y_2(0)$.~Therefore, the Wronskian vanishes at $r=0$, and we obtain the fundamental relation:
\begin{equation*}
	\begin{aligned}
		k^2 \int_{0}^{\alpha} \left[n^{(2)}(r) - n^{(1)}(r)\right] y_1(r) y_2(r) \, \mathrm{d} r &= \left\{y_1'(1)y_2(1) - y_1(1)y_2'(1)\right\}.
	\end{aligned}
\end{equation*}

Now, we define two key functions:
\begin{equation}\label{eq:def f g}
f(k):=k^2 \int_{0}^{\alpha} \left[n^{(2)}(r) - n^{(1)}(r)\right] y_1(r) y_2(r) \, \mathrm{d} r,
\quad		g(k):=\frac{f(k)}{d(k)},	
\end{equation}
where $d$ is defined as in \eqref{eq:d(k)}.


\section{Proof of Theorem \ref{thm:uniqueness}}\label{sec:thm2.1}
In this section, we present the proof of Theorem~\ref{thm:uniqueness}. The proof strategy involves establishing several lemmas that provide estimates for the solution $y$ of~\eqref{eq:original_ode} and for the characteristic function $d$ defined in~\eqref{eq:d(k)}. These estimates hold under the assumption that $n\in C^2_p[0,1]$. We also require some fundamental integral identities.

We begin by analyzing the solution $y$ of~\eqref{eq:original_ode}. In the case where  $n\in C^2_p[0,1]$, the Liouville transform can be applied to~\eqref{eq:original_ode}, reducing it to a standard Sturm-Liouville equation (\cite{CCH16}). For such equations, the estimation of solutions is well-established in \cite{PT87}, with further details available in \cite{AGP11}.


The following lemma extends this result to $n\in C^2_p[0,1]$ with $L$ intervals, which can be proven by induction on $L$.
\begin{lemma}\label{lem:y,y'}
Assume $y$ is a solution of \eqref{eq:original_ode} $n\in C^2_p[0,1] \cap \mathcal{M}$.
Assume further that on each interval, it satisfies $0 < n_* < n(r), n'(r), n''(r) < n^*$.
Then for $|k| \geq 1 $ and $0 \leq r \leq 1$, the following estimates hold:
	\begin{equation}\label{eq:y}
		\left| y(r) \right| \leq C \left( \frac{1}{|k|^2} + \frac{1}{|k|} \right)\exp\left( \left| \Im k \right| \int_{0}^{r} \sqrt{n(t)} \, dt \right),
	\end{equation}
	\begin{equation}\label{eq:y'}
		\left| y'(r) \right| \leq C \left( \frac{1}{|k|} + 1 \right)\exp\left( \left| \Im k \right| \int_{0}^{r} \sqrt{n(t)} \, dt \right),
	\end{equation}	
where $C$ depends only on $L$ and $n$, and $\Im k$ denotes the imaginary part of $k$.
\end{lemma}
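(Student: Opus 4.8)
The plan is to prove the estimates by induction on the number of intervals $L$, using the Liouville transform on each layer to reduce to standard Sturm--Liouville asymptotics and then propagating the bounds layer by layer via the continuity of $y$ and $y'$ at each interface $R_l$.

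\emph{Base case $L=1$.} Here $n=n_1\in C^2[0,1]$, so the Liouville transform \eqref{eq:Liouville transform} applies on all of $[0,1]$ and $y(r) = b_1 z_1(\xi_1(r))/[n_1(r)]^{1/4}$. The asymptotic expansion \eqref{eq:z1z2 dom} (from \cite{PT87}) gives $z_1(\xi_1) = [n_1(R_0)]^{-1/4}k^{-1}\sin\xi_1 + O(|k|^{-2})$, and more precisely, standard Sturm--Liouville theory provides the bound $|z_1(\xi_1)| \le C|k|^{-1}\exp(|\Im k|\,\xi_1)$ together with the corresponding bound on the $\xi$-derivative $|z_1'(\xi_1)| \le C\exp(|\Im k|\,\xi_1)$. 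Since $\xi_1(r) = \int_0^r\sqrt{n_1(t)}\,dt$, and since the conversion between $r$-derivatives and $\xi$-derivatives costs only the bounded factor $\sqrt{n_1(r)}$ (controlled by $n_*,n^*$) plus a lower-order term from differentiating $[n_1(r)]^{-1/4}$ (controlled by $\tilde n^*$ via $n'$), one recovers \eqref{eq:y} and \eqref{eq:y'} after fixing the normalization; the coefficient $b_1$ is determined, up to scaling, by the initial conditions $y(0)=0$, $y'(0)=1$, which pin down $b_1 = [n_1(R_0)]^{1/4}$ (so it contributes only a constant depending on $n$).

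\emph{Inductive step.} Suppose the estimates hold for $L-1$ layers. Given an $L$-layer index, the restriction of $y$ to $[0,R_{L-1}]$ solves the $(L-1)$-layer problem (after the trivial rescaling so the last-interval-of-the-truncated-problem matches), so by induction
\[
|y(R_{L-1})| \le C\Big(\tfrac{1}{|k|^2}+\tfrac{1}{|k|}\Big)\exp\!\Big(|\Im k|\!\int_0^{R_{L-1}}\!\!\sqrt{n(t)}\,dt\Big),\quad
|y'(R_{L-1})| \le C\Big(\tfrac{1}{|k|}+1\Big)\exp\!\Big(|\Im k|\!\int_0^{R_{L-1}}\!\!\sqrt{n(t)}\,dt\Big).
\]
On $[R_{L-1},1]$ write $y = b_{L1}z_{L1}(\xi_L(r))/[n_L(r)]^{1/4} + b_{L2}z_{L2}(\xi_L(r))/[n_L(r)]^{1/4}$. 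Matching $y$ and $y'$ at $R_{L-1}$ expresses $b_{L1},b_{L2}$ as a bounded linear combination of $y(R_{L-1})$ and $y'(R_{L-1})$: from the initial data for $z_{L1},z_{L2}$ at $\delta_{L-1}$ the connection matrix is $\big(\begin{smallmatrix} n_L^{1/4} & 0 \\ * & n_L^{-1/4}\end{smallmatrix}\big)$ evaluated at $R_{L-1}$, invertible with inverse bounded in terms of $n_*,n^*,\tilde n^*$. Hence $|b_{L1}| \le C(|k|^{-2}+|k|^{-1})E_{L-1}$ and $|b_{L2}| \le C(|k|^{-1}+1)E_{L-1}$ where $E_{L-1}$ is the exponential factor above. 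Feeding these into the layer-$L$ bounds $|z_{L1}(\xi_L)| \le C|k|^{-1}\exp(|\Im k|(\xi_L-\delta_{L-1}))$, $|z_{L2}(\xi_L)| \le C\exp(|\Im k|(\xi_L-\delta_{L-1}))$ (and their derivatives, one power of $|k|$ larger), and using $\int_0^{R_{L-1}}\sqrt n + (\xi_L(r)-\delta_{L-1}) = \int_0^r\sqrt{n(t)}\,dt$ to combine the exponentials, one checks that the dominant contributions are $b_{L1}z_{L1}$ giving $|k|^{-2}$ and $b_{L2}z_{L1}$ plus $b_{L1}z_{L2}$ giving $|k|^{-1}$ for $y$, and the analogous terms one power of $|k|$ larger for $y'$ — exactly \eqref{eq:y}, \eqref{eq:y'} with a new constant $C$ depending only on $L$ and $n$ (the $\mathcal{M}$ hypothesis is not needed here and presumably only streamlines bookkeeping of how many layers there are).

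\emph{Main obstacle.} The delicate point is keeping the $|k|$-powers and the exponential weights \emph{simultaneously} sharp through each interface: a naive application of Gronwall on the whole interval would produce the correct exponential but lose the decay in $|k|$, while the layer-by-layer bootstrap must verify that no interface matching inflates the power of $|k|$ (e.g. that $b_{L1}$ really is $O(|k|^{-2}+|k|^{-1})$ and not merely $O(1)$). This requires care with the two cases $\Im k=0$ versus $\Im k\ne0$ and with absorbing the terms arising from differentiating $[n_l(r)]^{-1/4}$ (which involve $n'_l$, hence need the bound $|n_l'|<\tilde n^*$) into the stated estimates; these are routine once organized, but are the crux of the argument. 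The explicit form of the constant $C$ in terms of $L$, $n_*$, $n^*$, $\tilde n^*$, $\tilde n^{**}$ — implicit in the statement — will emerge from tracking the inductive multiplicative factors, and is what later lemmas will need to make the thresholds on $\varepsilon_0$ in Theorem~\ref{thm:uniqueness} explicit.
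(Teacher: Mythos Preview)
Your overall strategy---induction on $L$, Liouville transform on each layer, matching at the interfaces---is exactly the paper's approach. The paper's proof cites the base case from \cite{AGP11} and carries out the inductive step just as you sketch, quoting \cite{PT87} for the bounds on $z_{(m+1)1}$ and $z_{(m+1)2}$.

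There is, however, a genuine bookkeeping slip in your inductive step that breaks the accounting. From the initial conditions in \eqref{eq:ode_y_n_j} one has $g_{L1}(R_{L-1})=0$, $g'_{L1}(R_{L-1})=1$, $g_{L2}(R_{L-1})=1$, $g'_{L2}(R_{L-1})=0$, so matching gives simply
\[
b_{L1}=y'(R_{L-1}),\qquad b_{L2}=y(R_{L-1}),
\]
which is the content of \eqref{eq:b_l+11_b_l+12}. Hence the correct orders are $|b_{L1}|\le C(|k|^{-1}+1)E_{L-1}$ and $|b_{L2}|\le C(|k|^{-2}+|k|^{-1})E_{L-1}$---the opposite of what you wrote. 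With your swapped assignment, the term $b_{L2}z_{L2}$ (which you omitted from your list of ``dominant contributions'') would be of order $(|k|^{-1}+1)\cdot O(1)=O(1)$, destroying the claimed $O(|k|^{-1})$ bound for $y$. With the correct assignment, both $b_{L1}z_{L1}$ and $b_{L2}z_{L2}$ are $O(|k|^{-2}+|k|^{-1})$ and the induction closes; your connection-matrix computation is not needed, since the coefficients are read off directly.
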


\begin{proof}
We proceed by induction on $L$. The base case $L=1$ is established in \cite{AGP11}.
	
Assume that the estimates hold for $L = m$. We now prove them for $L = m+1$.
First, the refractive index $n$ is defined as in \eqref{eq:n_piecewise_C2_def},
with the parameter $L$ in this definition set to $m+1$. Next, we perform the Liouville transformations in Section\,\ref{sec:Liouville transform}. By the induction hypothesis, the estimates \eqref{eq:y} and \eqref{eq:y'} hold for $y$ on the interval $0 \leq r < R_m$. On the interval $R_m \leq r \leq R_{m+1}$, we have
\begin{equation}\label{eq:yr eqn}
	y(r) = b_{(m+1)1} \frac{z_{(m+1)1}(\xi_{m+1}(r))}{[n_{m+1}(r)]^{\frac{1}{4}}}
   + b_{(m+1)2} \frac{z_{(m+1)2}(\xi_{m+1}(r))}{[n_{m+1}(r)]^{\frac{1}{4}}}.
\end{equation}

On one hand, the regularity requirements of the interior transmission problem \cite{CCH16} dictate that solutions of \eqref{eq:original_ode} must be at least continuously differentiable at each interface $R_l$ for $l=1,\cdots,m$. This yields the matching conditions:
\begin{equation*}
	\left\{
	\begin{aligned}
		&b_{l1} g_{l1}(R_l) + b_{l2} g_{l2}(R_l) =b_{(l+1)2} g_{(l+1)2}(R_{l}), \\
		&b_{l1} g'_{l1}(R_l) + b_{l2} g'_{l2}(R_l) =b_{(l+1)1} g'_{(l+1)1}(R_{l}).
	\end{aligned}
	\right.
\end{equation*}
Combined with the initial conditions from \eqref{eq:ode_y_n_j}, we obtain the recurrence relations:
\begin{equation}\label{eq:equation_b_j_c_j}
	\left\{
	\begin{aligned}
		&b_{(l+1)1} = b_{l1} g'_{l1}(R_l) + b_{l2} g'_{l2}(R_l), \\
		&b_{(l+1)2} = b_{l1} g_{l1}(R_l) + b_{l2} g_{l2}(R_l),
	\end{aligned}
	\right.
\end{equation}
with initial values $b_{11}=b_{1}=1$ and $b_{12}=0$. From \eqref{eq:equation_b_j_c_j} and \eqref{eq:general_form_ode_solution_y}, it follows that:
\begin{equation}\label{eq:b_l+11_b_l+12}
	b_{(l+1)1}=y'(R_l), \quad b_{(l+1)2}=y(R_l), \quad l=1,\cdots,m.
\end{equation}
Applying the inductive hypothesis, we conclude:
\begin{equation}\label{eq:bestimation}
	\left\{	
	\begin{aligned}
		&\left|b_{(m+1)1}\right| <C \left( \frac{1}{|k|} + 1 \right)\exp\left( \left| \Im k \right| \int_{0}^{R_m} \sqrt{n(t)} \, dt \right), \\
		&\left|b_{(m+1)2}\right| < C \left( \frac{1}{|k|^2} + \frac{1}{|k|} \right)\exp\left( \left| \Im k \right| \int_{0}^{R_m} \sqrt{n(t)} \, dt \right),
	\end{aligned}
	\right.
\end{equation}
where $C$ is a constant independent of $k$.~
	
On the other hand, Theorem 3 in Chapter 1 of the monograph \cite{PT87} yields
\begin{equation}\label{eq:zestimation}
	\left\{
	\begin{aligned}
		& \left|z_{(m+1)1}\right| \leq C \left\{ \frac{1}{|k|^2} + \frac{1}{|k|} \right\}\exp\left(|\Im k| (\xi_{m+1} - \delta_{m})\right), \\
		& \left|z_{(m+1)2}\right| \leq C \left\{ \frac{1}{|k|}+1 \right\}\exp\left(|\Im k| (\xi_{m+1} - \delta_{m})\right),
	\end{aligned}
	\right.
\end{equation}
where $C$ is a constant independent of $k$.
	
Combining the estimates \eqref{eq:bestimation} and \eqref{eq:zestimation} with equation \eqref{eq:yr eqn} and the condition $0 < n* < n < n^*$ yields the result in \eqref{eq:y}. The estimate \eqref{eq:y'} can be established similarly.
\end{proof}


To evaluate $d(k)$ using \eqref{eq:d(k)}, we must first identify the dominant terms in the asymptotic expansions of $y(1)$ and $y'(1)$. For sufficiently large $k$, we denote these leading terms by $D(y(1))$ and $D(y'(1))$, respectively.

\begin{lemma}\label{lem:y(1)_adequate_expression}
The dominant terms of $y(1)$ and $y'(1)$ are given by
\begin{equation}
D(y(1))=\frac{1}{\left[n_L(1)\right]^{\frac{1}{4}}k}\sum_{j=1}^{2^{L-1}}A_j\sin{\beta_j k},  \quad \quad D(y'(1))=\left[n_L(1)\right]^{\frac{1}{4}}\sum_{j=1}^{2^{L-1}}A_j\cos{\beta_j k},
\end{equation}
where
\begin{eqnarray*}
A_{j}&=&\frac{1}{2^{L-1}}\frac{1}{\left[n_{1}(0)\right]^{\frac{1}{4}}}
\prod\limits_{l=2}^{L}\left(\frac{\left[n_{l-1}(R_{l-1})\right]^{\frac{1}{4}}}{\left[n_l(R_{l-1})\right]^{\frac{1}{4}}}+(-1)^{i'_l} \frac{\left[n_l(R_{l-1})\right]^{\frac{1}{4}}}{\left[n_{l-1}(R_{l-1})\right]^{\frac{1}{4}}} \right), \\
\beta_j&=&\sum\limits_{l=1}^{L-1}(-1)^{i_l}\hat{\delta}_l+\delta_{L},\quad j=1,\cdots,2^{L-1}.
\end{eqnarray*}
Here, the relation between $i'_l$ and $i_l$ is that ${i'_l}={i_l + i_{l-1}}(\hspace{-2mm}\mod 2)$ for $2 \le l \le L$, where $i_L=0$.
\end{lemma}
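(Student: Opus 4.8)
The plan is to compute the leading-order asymptotics of $y(1)$ and $y'(1)$ by propagating the Liouville-transformed solution through all $L$ layers and tracking only the dominant contribution at each interface. First I would set up the iteration: on each layer $l$ we have the representation \eqref{eq:original_ode_solution_y} with $z_{l1}, z_{l2}$ governed by the leading-order expansions in \eqref{eq:z1z2 dom}; dropping the $O(1/|k|)$ and $O(1/|k|^2)$ corrections, the approximate solution on $(R_{l-1},R_l)$ is a linear combination of $\frac{1}{[n_l(R_{l-1})]^{1/4}k}\sin(\xi_l-\delta_{l-1})$ and $[n_l(R_{l-1})]^{1/4}\cos(\xi_l-\delta_{l-1})$ (with the appropriate powers of $[n_l(r)]^{-1/4}$ in front). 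The coefficients $b_{(l+1)1}=y'(R_l)$, $b_{(l+1)2}=y(R_l)$ from \eqref{eq:b_l+11_b_l+12} give the recurrence, so the core task is to understand how the pair $(y(R_l), y'(R_l))$ transforms across the interface $R_l$ to leading order.

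Second, I would establish the single-interface transfer rule. Evaluating the layer-$l$ solution and its derivative at $r=R_l$ using $\xi_l(R_l)-\delta_{l-1}=\hat\delta_l$ and the chain rule $\frac{d}{dr}\xi_l = \sqrt{n_l(r)}$, one finds — to leading order in $k$ — that the ``amplitude'' carried forward gets multiplied by a factor of the form $[n_l(R_l)]^{-1/4}\cos(\hat\delta_l k)$ or $\pm[n_l(R_l)]^{1/4}\sin(\hat\delta_l k)$ depending on which component one is tracking, and crucially the continuity of $y$ but the presence of a $1/4$-power weight $[n(r)]^{1/4}$ in the Liouville substitution produces, at the next interface $R_l$, the mismatch ratio $[n_{l}(R_l)]^{1/4}/[n_{l+1}(R_l)]^{1/4}$ and its reciprocal. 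Expanding the product of cosines and sines across all $L$ layers via the product-to-sum identities turns $\prod_l \cos(\hat\delta_l k)$-type products into a sum of $2^{L-1}$ terms indexed by sign choices $(-1)^{i_l}$, each with phase $\sum_l (-1)^{i_l}\hat\delta_l$; adding the final $+\delta_L$ accounts for the extra $\sqrt{n}$-length accumulated in the definition of $\xi_L$ versus the reference interval, which is where $\beta_j = \sum_{l=1}^{L-1}(-1)^{i_l}\hat\delta_l + \delta_L$ comes from. The amplitude $A_j$ is then the product over layers of the interface factors $\left([n_{l-1}(R_{l-1})]^{1/4}/[n_l(R_{l-1})]^{1/4} + (-1)^{i'_l}[n_l(R_{l-1})]^{1/4}/[n_{l-1}(R_{l-1})]^{1/4}\right)$, with the $2^{-(L-1)}$ prefactor coming from the product-to-sum expansion and $[n_1(0)]^{-1/4}$ from the very first layer's initial condition $z_1 \sim \frac{1}{[n_1(R_0)]^{1/4}k}\sin\xi_1$. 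The bookkeeping relation $i'_l = i_l + i_{l-1} \pmod 2$ arises because the sign attached to a given layer's trigonometric argument in the telescoped product is determined jointly by the choices made at that layer and the previous one — this is the standard phenomenon when composing $2\times2$ transfer matrices whose entries alternate between $\sin$ and $\cos$.

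Third, I would make the induction rigorous: assuming the claimed forms of $D(y(R_{L-1}))$ and $D(y'(R_{L-1}))$ hold after $L-1$ layers (this is the inductive hypothesis, with the base case $L=1$ being the elementary single-layer computation $y(r) = \frac{1}{k}\sin(kr)/\sqrt{n} + \dots$ already in \cite{AGP11}), I would plug $b_{L1}=y'(R_{L-1})$ and $b_{L2}=y(R_{L-1})$ into the layer-$L$ representation, evaluate at $r=1$, substitute the leading-order $z_{L1}, z_{L2}$ from \eqref{eq:z1z2 dom}, expand each $\sin(\beta_j^{(L-1)}k)\cos(\hat\delta_L k)$ and $\cos(\beta_j^{(L-1)}k)\sin(\hat\delta_L k)$ by product-to-sum, and collect terms. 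One must check that all the discarded terms are genuinely lower order; this follows from Lemma~\ref{lem:y,y'} for the $b_{Lj}$ contributions and from \eqref{eq:z1z2 dom} for the $z_{Lj}$ remainders, using $0<n_*<n<n^*$ to bound the weights $[n_L(r)]^{\pm1/4}$ uniformly and to control the exponential factor $\exp(|\Im k|\int_0^1\sqrt{n})$ uniformly on vertical strips.

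The main obstacle I anticipate is purely combinatorial: correctly tracking the sign pattern $(-1)^{i_l}$ versus $(-1)^{i'_l}$ through the telescoping product so that the amplitude indices and the phase indices match up consistently across all $2^{L-1}$ terms. The trigonometric product-to-sum expansion naturally generates two index families — one governing which $\pm\hat\delta_l$ appears in the phase ($i_l$), the other governing which reciprocal-ratio factor appears in the amplitude ($i'_l$) — and the relation $i'_l \equiv i_l + i_{l-1} \pmod 2$ (with $i_L = 0$) is exactly the reconciliation between them. Verifying this relation carefully, and confirming that the inductive step reproduces it for the $(L+1)$-layer case, is the delicate part; everything else is a routine (if lengthy) asymptotic computation.
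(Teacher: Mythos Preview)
Your proposal is correct and follows essentially the same approach as the paper: induction on the number of layers, using the matching relations $b_{(l+1)1}=y'(R_l)$, $b_{(l+1)2}=y(R_l)$ together with the leading-order asymptotics \eqref{eq:z1z2 dom}, and expanding via product-to-sum identities to generate the $2^{L-1}$ terms. The only cosmetic difference is that the paper starts the induction at $L=2$ (the first case where the product and the index relation $i'_l\equiv i_l+i_{l-1}$ are nontrivial) rather than $L=1$, and it simply reads off the index relation from the explicit computation rather than motivating it via transfer matrices.
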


\begin{proof}
We prove this lemma by induction.

For the base case $L=2$, \eqref{eq:original_ode_solution_y} implies
\begin{equation}\label{eq:y(1)}
y(1)=b_{21}\frac{z_{21}\left(\xi_2(1)\right)}{\left[n_{2}(1)\right]^{\frac{1}{4}}}+b_{22}\frac{z_{22}(\xi_2(1))}{\left[n_{2}(1)\right]^{\frac{1}{4}}}.
\end{equation}

From \eqref{eq:b_l+11_b_l+12}, it is clear that $b_{21}=y'(R_1)$ and $b_{22}=y(R_1)$. Together with \eqref{eq:Liouville transform}, this yields
\begin{equation}\label{eq:Db21}
	D(b_{21})=\frac{\left[n_1(R_1)\right]^{\frac{1}{4}}}{\left[n_1(0)\right]^{\frac{1}{4}}}\cos\left(k\hat{\delta}_1\right),\quad
	D(b_{22})=\frac{1}{\left[n_1(0)\right]^{\frac{1}{4}}\left[n_1(R_1)\right]^{\frac{1}{4}}}\frac{\sin\left(k\hat{\delta}_1\right)}{k},
\end{equation}
respectively. It follows from (\ref{eq:z1z2 dom}) that
\begin{equation}\label{eq:Dz21}
	D(z_{21}\left(\xi_2(1)\right))=\frac{1}{\left[n_{2}(R_1)\right]^{\frac{1}{4}}}\frac{\sin\left(k\hat{\delta}_2\right)}{k},\quad
	D(z_{22}\left(\xi_2(1)\right))=\left[n_{2}(R_1)\right]^{\frac{1}{4}}\cos\left(k\hat{\delta}_2\right).
\end{equation}
Combining the estimates \eqref{eq:Db21} and \eqref{eq:Dz21} with equation \eqref{eq:y(1)} yields
\begin{eqnarray*}
D(y(1))&=&\frac{1}{2k}\frac{1}{\left[n_{1}(0)\right]^{\frac{1}{4}}\left[n_{2}(1)\right]^{\frac{1}{4}}}\left(\frac{\left[n_{1}(R_1)\right]^{\frac{1}{4}}}{\left[n_{2}(R_1)\right]^{\frac{1}{4}}}+\frac{\left[n_{2}(R_1)\right]^{\frac{1}{4}}}{\left[n_{1}(R_1)\right]^{\frac{1}{4}}}\right)\sin\left\{k\left(\hat{\delta}_2+\hat{\delta}_1\right)\right\}\\
		&+&\frac{1}{2k}\frac{1}{\left[n_{1}(0)\right]^{\frac{1}{4}}\left[n_{2}(1)\right]^{\frac{1}{4}}}\left(\frac{\left[n_{1}(R_1)\right]^{\frac{1}{4}}}{\left[n_{2}(R_1)\right]^{\frac{1}{4}}}-\frac{\left[n_{2}(R_1)\right]^{\frac{1}{4}}}{\left[n_{1}(R_1)\right]^{\frac{1}{4}}}\right)\sin\left\{k\left(\hat{\delta}_2-\hat{\delta}_1\right)
		\right\}.
\end{eqnarray*}
Similarly, we have
\begin{eqnarray*}
D(y'(1))&=& \frac{1}{2}\frac{\left[n_{2}(1)\right]^{\frac{1}{4}}}{\left[n_{1}(0)\right]^{\frac{1}{4}}}\left(\frac{\left[n_{1}(R_1)\right]^{\frac{1}{4}}}{\left[n_{2}(R_1)\right]^{\frac{1}{4}}}+\frac{\left[n_{2}(R_1)\right]^{\frac{1}{4}}}{\left[n_{1}(R_1)\right]^{\frac{1}{4}}}\right)\cos\left\{k\left(\hat{\delta}_2+\hat{\delta}_1\right)\right\}\\
		&+&\frac{1}{2}\frac{\left[n_{2}(1)\right]^{\frac{1}{4}}}{\left[n_{1}(0)\right]^{\frac{1}{4}}}\left(\frac{\left[n_{1}(R_1)\right]^{\frac{1}{4}}}{\left[n_{2}(R_1)\right]^{\frac{1}{4}}}-\frac{\left[n_{2}(R_1)\right]^{\frac{1}{4}}}{\left[n_{1}(R_1)\right]^{\frac{1}{4}}}\right)\cos\left\{k\left(\hat{\delta}_2-\hat{\delta}_1\right)\right\}.
\end{eqnarray*}
It is immediately that the conclusion of this lemma holds for $L=2$.

Assume that the conclusions hold for $L=m$. We now prove them for $L=m+1$.
It follows from \eqref{eq:original_ode_solution_y} that
\begin{eqnarray}\label{eq:y1m+1}
		y(1)&=&b_{(m+1)1}\frac{z_{(m+1)1}\left(\delta_{m+1}(r)\right)}{\left[n_{m+1}(1)\right]^{\frac{1}{4}}}+b_{(m+1)2}\frac{z_{(m+1)2}(\delta_{m+1}(r))}{\left[n_{m+1}(1)\right]^{\frac{1}{4}}}.
\end{eqnarray}
From \eqref{eq:b_l+11_b_l+12}, there is:
\begin{equation*}
	b_{(m+1)1}=y'(R_m), \quad b_{(m+1)2}=y(R_m), 
\end{equation*}
The induction hypothesis for $L=m$ implies
\begin{equation}\label{eq:Dbm1}
	D(b_{(m+1)1})=\sum_{j=1}^{2^{m-1}}A_j\left[n_m(R_m)\right]^{\frac{1}{4}}\cos{\beta_j k},\quad
	D(b_{(m+1)2})=\sum_{j=1}^{2^{m-1}}\frac{A_j}{\left[n_m(R_m)\right]^{\frac{1}{4}}}\frac{\sin{\beta_j k} }{k},
\end{equation}
respectively. It follows from (\ref{eq:z1z2 dom}) that
\begin{equation}\label{eq:Dzm1}
\hspace{-5mm}	D(z_{(m+1)1}\left(\xi_{(m+1)}(1)\right))=\frac{1}{\left[n_{m+1}(R_{m})\right]^{\frac{1}{4}}}\frac{\sin\left(k\hat{\delta}_{m+1}\right)}{k},~
	D(z_{(m+1)2}\left(\xi_{(m+1}(1)\right))=\left[n_{m+1}(R_{m})\right]^{\frac{1}{4}}\cos\left(k\hat{\delta}_{m+1}\right).
\end{equation}
Combining the estimates \eqref{eq:Dbm1} and \eqref{eq:Dzm1} with equation \eqref{eq:y1m+1} yields
\begin{eqnarray*}
D(y(1))&=&\left( \sum_{j=1}^{2^{m-1}}A_j\left[n_m(R_m)\right]^{\frac{1}{4}}\cos{\beta_j k} \right)\frac{1}{\left[n_{m+1}\left(R_m\right)\right]^{\frac{1}{4}}\left[n_{m+1}(1)\right]^{\frac{1}{4}}}\frac{\sin{k\hat{\delta}}_{m+1}}{k}\\
		&+&\left( \sum_{j=1}^{2^{m-1}}\frac{A_j}{\left[n_m(R_m)\right]^{\frac{1}{4}}}\frac{\sin{\beta_j k} }{k} \right)\frac{\left[n_{m+1}\left(R_m\right)\right]^{\frac{1}{4}}}{\left[n_{m+1}(1)\right]^{\frac{1}{4}}}\cos{k\hat{\delta}_{m+1}}\\
		&=&\left\{\sum_{j=1}^{2^{m-1}}\frac{A_j}{2k\left[n_{m+1}(1)\right]^{\frac{1}{4}}}\frac{\left[n_m\left(R_m\right)\right]^{\frac{1}{4}}}{\left[n_{m+1}\left(R_m\right)\right]^\frac{1}{4}}\left[\sin\left\{k\left(\hat{\delta}_{m+1}+\beta_j\right)\right\}+\sin\left\{k\left(\hat{\delta}_{m+1}-\beta_j\right)\right\}\right]\right\}\\
		&+&\left\{\sum_{j=1}^{2^{m-1}}\frac{A_j}{2k\left[n_{m+1}(1)\right]^{\frac{1}{4}}}\frac{\left[n_{m+1}\left(R_m\right)\right]^{\frac{1}{4}}}{\left[n_{m}\left(R_m\right)\right]^\frac{1}{4}}\left[\sin\left\{k\left(\hat{\delta}_{m+1}+\beta_j\right)\right\}-\sin\left\{k\left(\hat{\delta}_{m+1}-\beta_j\right)\right\}\right]\right\}\\
		&=&\frac{1}{2k\left[n_{m+1}\left(1\right)\right]^{\frac{1}{4}}}\sum_{j=1}^{2^{m-1}}\left\{A_j\left(\frac{\left[n_{m}\left(R_m\right)\right]^{\frac{1}{4}}}{\left[n_{m+1}\left(R_m\right)\right]^\frac{1}{4}}+\frac{\left[n_{m+1}\left(R_m\right)\right]^{\frac{1}{4}}}{\left[n_{m}\left(R_m\right)\right]^\frac{1}{4}}\right)\sin\left\{ k\left(\hat{\delta}_{m+1}+\beta_j\right) \right\}\right.\\
		&+&\left.A_j\left(\frac{\left[n_{m}\left(R_m\right)\right]^{\frac{1}{4}}}{\left[n_{m+1}\left(R_m\right)\right]^\frac{1}{4}}-\frac{\left[n_{m+1}\left(R_m\right)\right]^{\frac{1}{4}}}{\left[n_{m}\left(R_m\right)\right]^\frac{1}{4}}\right)\sin\left\{k\left(\hat{\delta}_{m+1}-\beta_j\right)\right\}\right\}.
\end{eqnarray*}
It is immediate that the conclusion of this lemma for $y(1)$ holds for $L=m+1$.
The reasoning for $y'(1)$ for $L=m+1$ is the same. Therefore, we have completed the induction.
\end{proof}


Next, we estimate the characteristic function $d$. To do so, we first need to calculate the dominant term of $d(k)$.
\begin{lemma}\label{lem:main_term_d(k)}
The dominant term of $d(k)$ is given by
\begin{equation}\label{eq:full_main_term_dk}
	D(d(k)) = \frac{1}{2^L k} \sum_{j=1}^{2^L} \tilde{A}_j \sin(\tilde{\beta}_j k),
\end{equation}
where
\begin{eqnarray*}
	\tilde{A}_j &=& \left( \left[n_L(1)\right]^{1/4}
	+ (-1)^{i'_{L+1}} \frac{1}{\left[n_L(1)\right]^{1/4}} \right)
	\prod_{l=2}^{L} \left( \frac{\left[n_{l-1}(R_{l-1})\right]^{1/4}}{\left[n_l(R_{l-1})\right]^{1/4}}
	+ (-1)^{i'_l} \frac{\left[n_l(R_{l-1})\right]^{1/4}}{\left[n_{l-1}(R_{l-1})\right]^{1/4}} \right),\\
\tilde{\beta}_j &=& 1 + \sum\limits_{l=1}^{L} (-1)^{i_l} \hat{\delta}_l,\quad j=1,\cdots,2^{L}.
\end{eqnarray*}
The relation between $i'_l$ and $i_l$ is given by
\begin{equation*}
i'_{L+1} = \min \{ 1-i_{L+1}, 1-i_{L} \}
\quad \text{and} \quad
{i'_l} = {i_l + i_{l-1}}(\hspace{-3.5mm}\mod 2) \quad \text{for} \quad 2 \leq l \leq L,
\end{equation*}
where $i_{L+1}=0$.
\end{lemma}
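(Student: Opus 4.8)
The plan is to compute $d(k)$ directly from its definition \eqref{eq:d(k)} by substituting the dominant terms of $y(1)$ and $y'(1)$ supplied by Lemma~\ref{lem:y(1)_adequate_expression}, together with the elementary expansions $\sin k/k$ and $\cos k$ for the $v$-part. Writing out \eqref{eq:d(k)},
\begin{equation*}
d(k) = -y(1)\cos k + y'(1)\frac{\sin k}{k},
\end{equation*}
so its dominant term is
\begin{equation*}
D(d(k)) = -D(y(1))\cos k + D(y'(1))\frac{\sin k}{k}.
\end{equation*}
First I would insert the two formulas from Lemma~\ref{lem:y(1)_adequate_expression}; this gives
\begin{equation*}
D(d(k)) = \sum_{j=1}^{2^{L-1}} A_j \left[ \left[n_L(1)\right]^{1/4}\cos(\beta_j k)\frac{\sin k}{k} - \frac{1}{\left[n_L(1)\right]^{1/4}k}\sin(\beta_j k)\cos k \right].
\end{equation*}
Then I would apply the product-to-sum identities $\cos a\sin b = \tfrac12(\sin(a+b)-\sin(a-b))$ and $\sin a\cos b = \tfrac12(\sin(a+b)+\sin(a-b))$ to each summand, with $a=\beta_j k$, $b = k$. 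Each term of the old sum splits into a piece carrying frequency $\beta_j+1$ and a piece carrying frequency $\beta_j-1$; since $\beta_j = \delta_L + \sum_{l=1}^{L-1}(-1)^{i_l}\hat\delta_l$ and $\delta_L = \hat\delta_L + \sum_{l=1}^{L-1}\hat\delta_l$ has the trivial decomposition with $i_L=0$, adding $\pm 1 = \pm\hat\delta_L\cdot(\text{sign})$... more precisely $\beta_j \pm 1$ runs exactly over the $2^L$ values $1 + \sum_{l=1}^{L}(-1)^{i_l}\hat\delta_l = \tilde\beta_j$ as the last bit $i_L$ is chosen, matching the index set in the statement. This is the bookkeeping core of the proof: one must verify that the reindexing $j\in\{1,\dots,2^{L-1}\}$ together with the binary choice of $i_L$ (equivalently, whether the $+1$ or $-1$ branch is taken, after tracking a sign) is exactly the bijection onto $\{1,\dots,2^L\}$ used in Lemma~\ref{lem:main_term_d(k)}, and that the coefficient attached to each frequency is the claimed $\tilde A_j/2^L$.

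For the coefficients: the term at frequency $\beta_j+1$ picks up $A_j\bigl([n_L(1)]^{1/4} - \tfrac{1}{[n_L(1)]^{1/4}}\bigr)\cdot\tfrac12$ wait — the two contributions combine as $\tfrac12 A_j\bigl([n_L(1)]^{1/4} + \tfrac{1}{[n_L(1)]^{1/4}}\bigr)$ at one frequency and $\tfrac12 A_j\bigl([n_L(1)]^{1/4} - \tfrac{1}{[n_L(1)]^{1/4}}\bigr)$ at the other; precisely which of $\beta_j\pm1$ gets the sum and which gets the difference is recorded by the parity index $i'_{L+1}=\min\{1-i_{L+1},1-i_L\}$ with $i_{L+1}=0$, exactly as in the statement. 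Combining the extra factor $\tfrac12$ with the $\tfrac1{2^{L-1}}$ already inside $A_j$ produces $\tfrac1{2^L}$, and the product $\prod_{l=2}^{L}(\cdots)$ in $A_j$ carries over verbatim into the formula for $\tilde A_j$, with the new outermost factor $\bigl([n_L(1)]^{1/4} + (-1)^{i'_{L+1}}\tfrac1{[n_L(1)]^{1/4}}\bigr)$ being precisely the factor just produced. So the new coefficient is exactly $\tilde A_j/2^L$ as claimed.

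The main obstacle is purely combinatorial rather than analytic: making the index correspondence watertight, i.e. showing that the map from (an index $j$ in the $L-1$ problem, a choice of branch $\pm$) to a $2^L$-index is a bijection and that it respects both the frequency labels $\tilde\beta_j$ and the sign conventions defining $i'_l$ and $i'_{L+1}$. I would handle this by writing $j$ in terms of its binary vector $(i_1,\dots,i_{L-1})$ (with $i_L$ fixed $=0$ in the $L-1$-stage), letting the branch choice be the new value of $i_L\in\{0,1\}$, checking that $\beta_j + (-1)^{i_L}\cdot 1 \mapsto 1+\sum_{l=1}^L(-1)^{i_l}\hat\delta_l$ — here using $1 = \hat\delta_L$ only symbolically, the "$1$" in $\tilde\beta_j$ coming from the $v$-equation's wavenumber, not from $\hat\delta_L$ — and that the parity bookkeeping for the leading coefficient matches $\min\{1-i_{L+1},1-i_L\}$. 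A mild secondary point is to confirm that all the discarded terms are genuinely of lower order in $1/|k|$ uniformly for $|k|\ge1$, which follows from the error estimates in \eqref{eq:z1z2 dom} and Lemma~\ref{lem:y,y'} exactly as in the proof of Lemma~\ref{lem:y(1)_adequate_expression}; since that lemma already isolates $D(y(1))$ and $D(y'(1))$ with controlled remainders, no new estimate is needed here. The induction used for Lemma~\ref{lem:y(1)_adequate_expression} is not required again — a single direct substitution suffices.
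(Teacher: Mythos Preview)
Your proposal is correct and follows essentially the same approach as the paper's own proof: substitute the dominant terms of $y(1)$ and $y'(1)$ from Lemma~\ref{lem:y(1)_adequate_expression} into $d(k)$, apply the product-to-sum identities to split each term into frequencies $1\pm\beta_j$, and then read off the coefficients $\tilde A_j$ and frequencies $\tilde\beta_j$. The paper's proof is in fact terser than yours---it stops at the line with $\sin((1+\beta_j)k)$ and $\sin((1-\beta_j)k)$ and simply says this ``together with the expressions for $\beta_j$ and $A_j$ in Lemma~\ref{lem:y(1)_adequate_expression} implies the conclusion''---so your more careful discussion of the index bijection and the parity rule for $i'_{L+1}$ is an elaboration rather than a departure.
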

\begin{proof}
Substituting the dominant terms of $y(1)$ and $y'(1)$ into the expression for $d(k)$ yields
\begin{equation*}
	\begin{aligned}
		D(d(k))&= \left\{\left[n_L(1)\right]^{\frac{1}{4}}\sum_{j=1}^{2^{L-1}}A_j\cos{\beta_j k} \right\}\frac{\sin{k}}{k}-\left\{\frac{1}{\left[n_L(1)\right]^{\frac{1}{4}}}\sum_{j=1}^{2^{L-1}}\frac{A_j\sin{\beta_j k} }{k} \right\}\cos{k}\\
&=\frac{1}{2k}\sum_{j=1}^{2^{L-1}}A_j\left\{\left(\left[n_L(1)\right]^{\frac{1}{4}}-\frac{1}{\left[n_L(1)\right]^{\frac{1}{4}}}\right)\sin{\left(1+\beta_j\right)k}+\left( \left[n_L(1)\right]^{\frac{1}{4}}+\frac{1}{\left[n_L(1)\right]^{\frac{1}{4}}}\right)\sin{\left(1-\beta_j\right)k} \right\},
	\end{aligned}
\end{equation*}
together with the expressions for $\beta_j$ and $A_j$ in Lemma\,\ref{lem:y(1)_adequate_expression} implies the conclusion.
\end{proof}


Consider two distinct refractive indices, $n^{(1)}(r)$ and $n^{(2)}(r)$. The corresponding initial value problems for $j = 1, 2$ yield:
\begin{equation}\label{eq:y_1,y_2b}
	\left\{
	\begin{aligned}
		&y_j^{\prime\prime} - k^{2} n^{(j)}(r) y_j = 0, \quad 0 < r < 1,\\
		&y_j(0) = 0, \quad y_j'(0) = 1.
	\end{aligned}
	\right.
\end{equation}
\begin{lemma}\label{lem:completeness}(\cite{Ram09})
Assume that for $j=1,2$, the coefficients satisfy $n^{(j)} \in \mathcal{M}$ with uniform bounds $0 < n_* \leq n^{(j)}(r) \leq n^*$, and that $y_j$ are solutions of (\ref{eq:y_1,y_2b}). If
\begin{equation*}
	\int_{0}^{1} q(r) y_1(r) y_2(r) \,\mathrm{d}r = 0, \quad \text{for all } k \in \mathbb{R},
\end{equation*}
and $q(r)\in \mathcal{M}$,~then $q(r) \equiv 0$.
\end{lemma}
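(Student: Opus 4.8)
The plan is to argue by contradiction, using the transmutation/transformation-operator representation of the solutions $y_j$ together with the completeness of the family $\{\cos k\xi : k\in\mathbb{R}\}$ (equivalently, an injectivity property of a cosine-type integral transform). First I would normalize: writing $\xi_j(r)=\int_0^r\sqrt{n^{(j)}(t)}\,dt$ for the Liouville variable associated with $n^{(j)}$, the solution $y_j$ of \eqref{eq:y_1,y_2b} with $y_j(0)=0$, $y_j'(0)=1$ admits a representation of the form
\begin{equation*}
 y_j(r)=\frac{1}{k\,[n^{(j)}(r)]^{1/4}}\left(\sin(k\xi_j(r))+\int_0^{\xi_j(r)}K_j(\xi_j(r),s)\sin(ks)\,ds\right),
\end{equation*}
valid for $n^{(j)}\in C_p^2$; for the rougher class $n^{(j)}\in\mathcal{M}$ one uses instead the integral (Volterra) form of the solution and a Paley–Wiener/Gronwall estimate that still gives $y_j(r)=c_j(r)\sin(k\xi_j(r))/k + (\text{lower order})$ with controlled kernels. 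Substituting the two representations into $\int_0^1 q(r)y_1(r)y_2(r)\,dr=0$ and expanding the product $\sin(k\xi_1)\sin(k\xi_2)=\tfrac12[\cos(k(\xi_1-\xi_2))-\cos(k(\xi_1+\xi_2))]$, I would collect everything as a single integral $\int_{-T}^{T} F(\tau)\cos(k\tau)\,d\tau + \int_{-T}^{T}G(\tau)\frac{\sin(k\tau)}{k}\,d\tau = 0$ for all real $k$, where $F,G$ are built from $q$, the weights $[n^{(j)}]^{-1/4}$, the Jacobians of the changes of variable $r\mapsto\xi_j(r)$, and the transmutation kernels, and $T=\xi_1(1)+\xi_2(1)$.

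The second step is to extract, from the vanishing of that expression for all $k\in\mathbb{R}$, the conclusion $F\equiv 0$: since $\int G(\tau)\frac{\sin k\tau}{k}d\tau$ and $\int F(\tau)\cos k\tau\,d\tau$ are the real and (a derivative of the) imaginary parts of an entire function of exponential type that vanishes on the whole real axis, each Fourier-type component must vanish separately — concretely, divide into even/odd parts in $k$, or differentiate in $k$ and take limits $k\to0$, to isolate $\widehat{F}\equiv 0$ hence $F\equiv 0$ a.e. Then I would run a Gronwall/Volterra inversion argument on $F\equiv 0$: the leading (kernel-free) contribution to $F$ near $\tau=T$ is a nonzero multiple of $q$ evaluated at the point where $\xi_1(r)+\xi_2(r)=\tau$, while the kernel contributions only involve $q$ on $\{\xi_1+\xi_2\le \tau\}$; peeling off from $\tau=T$ downward (an induction on the partition points of $\mathcal{M}$ on which $q$ is continuous, moving leftward interval by interval) forces $q\equiv 0$ on each piece, hence $q\equiv 0$ on $[0,1]$.

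The main obstacle — and the point where the hypothesis $q,n^{(j)}\in\mathcal{M}$ does real work — is the low regularity: with only piecewise-continuous $q$ and $n^{(j)}$ merely bounded above and below, the clean $C^2$ transmutation kernel is not available, so I would instead work directly with the Volterra integral equation for $y_j$, obtaining a priori bounds $|y_j(r)|\le C(1+|k|)^{-1}e^{|\Im k|\xi_j(r)}$ (in the spirit of Lemma~\ref{lem:y,y'}) and an exponential-type bound on the entire function $k\mapsto\int_0^1 q y_1 y_2\,dr$, which is exactly what is needed to invoke the uniqueness of its zero set / completeness of $\{\cos k\xi\}$. The delicate part is keeping track of the weights $[n^{(j)}(r)]^{-1/4}$ and the non-smooth Jacobians $dr/d\xi_j=1/\sqrt{n^{(j)}}$ when changing variables in the product integral, and verifying that the composite density $F$ inherits membership in $\mathcal{M}$ (finitely many sign/continuity changes) so that the peeling-off induction terminates after finitely many steps. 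Since this is precisely the content cited from \cite{Ram09}, I would present the argument in this structure and refer to \cite{Ram09,PT87} for the detailed kernel estimates.
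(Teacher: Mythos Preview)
The paper does not prove this lemma at all: it is stated with the citation \cite{Ram09} in the header and then used as a black box (this is Ramm's ``Property~C for ODE''). So there is nothing in the paper to compare your argument against beyond the bare citation.

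Your outline is a reasonable sketch of one route to Property~C, but note one slip: the equation in \eqref{eq:y_1,y_2b} is $y_j'' - k^{2} n^{(j)}(r) y_j = 0$ with real $k$, not $y_j'' + k^{2} n^{(j)} y_j = 0$. Hence the solutions are of hyperbolic type, with leading behaviour $\sinh(k\xi_j(r))/(k[n^{(j)}(0)n^{(j)}(r)]^{1/4})$ rather than $\sin(k\xi_j(r))/k$, and the relevant transform is Laplace-type rather than Fourier-type. This actually makes the ``peeling from $\tau=T$ downward'' step cleaner (the dominant exponential contribution to $\int_0^1 q\,y_1 y_2\,dr$ as $k\to+\infty$ comes from the endpoint $r$ where $\xi_1+\xi_2$ is maximal, i.e.\ $r=1$, and Watson-lemma asymptotics force $q$ to vanish there first), but it does mean your even/odd-in-$k$ separation and the $\cos(k\tau)$ completeness step as written do not apply directly. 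Also, for merely $n^{(j)}\in\mathcal{M}$ (piecewise continuous, bounded away from $0$ and $\infty$) the transmutation kernel $K_j$ need not exist in the classical sense; Ramm's argument in \cite{Ram09} avoids transmutation kernels and works directly with the Volterra integral equation and large-$k$ asymptotics, which is closer to your fallback plan than to your primary plan.
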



Now, we are at the stage to prove Theorem \ref{thm:uniqueness}.
\begin{proof}
We first observe that for all special transmission eigenvalues, the value of $\tilde{\beta}_1 = 1 + \delta_L$ can be obtained from \eqref{eq:full_main_term_dk} and reference \cite{CCH16}. This immediately yields $\tilde{\beta}_{2^L} = 1 - \delta_L$, where $\delta_l$ (with $1 \leq l \leq L$) is defined in \eqref{eq:Liouville transform}.	

We aim to estimate $ \tilde{A}_j $ and $ \sin(\tilde{\beta}_j k) $ for $ j=1,2,\cdots,~2^L $. For $\tilde{A}_1$, given that $n$ is prescribed on $[\alpha,1]$, the quantity $M_1$ is known. Using the inequality $x + \frac{1}{x} \geq 2$ for $x > 0$, we obtain the lower bound
\begin{equation}\label{eq:tilde_A_1}
    |\tilde{A}_1|\geq M_1 2^{L-1},\quad 
\end{equation}
where $M_1$ is given in Theorem\,\ref{thm:uniqueness}.

For $\tilde{A}_j$ with $2 \leq j \leq 2^L$, given the bounds $0 < n_* < n < n^*$, we have
\begin{equation*}
	\left(\frac{n_*}{n^*}\right)^{\frac{1}{4}} \leq \frac{\left[n_{l-1}(R_{l-1})\right]^{1/4}}{\left[n_l(R_{l-1})\right]^{1/4}}\leq 	\left(\frac{n^*}{n_*}\right)^{\frac{1}{4}},~~ 2\leq l\leq L.
\end{equation*}
Note that for any $x>0$,
\begin{equation*}
\left|x - \frac{1}{x}\right| < \left|x + \frac{1}{x}\right|.
\end{equation*}
Consequently,
\begin{eqnarray*}
&& \left|\frac{\left[n_{l-1}(R_{l-1})\right]^{1/4}}{\left[n_l(R_{l-1})\right]^{1/4}}
	+ (-1)^{i'_l} \frac{\left[n_l(R_{l-1})\right]^{1/4}}{\left[n_{l-1}(R_{l-1})\right]^{1/4}}\right|
	=\left|\frac{\left[n_{l-1}(R_{l-1})\right]^{1/4}}{\left[n_l(R_{l-1})\right]^{1/4}}
	+ (-1)^{i'_l} \frac{1}{\frac{\left[n_{l-1}(R_{l-1})\right]^{1/4}}{\left[n_l(R_{l-1})\right]^{1/4}}}\right| \nonumber\\
&\leq& \left|\frac{\left[n_{l-1}(R_{l-1})\right]^{1/4}}{\left[n_l(R_{l-1})\right]^{1/4}}+\frac{1}{\frac{\left[n_{l-1}(R_{l-1})\right]^{1/4}}{\left[n_l(R_{l-1})\right]^{1/4}}}\right|
\leq \left(\frac{n_*}{n^*}\right)^{\frac{1}{4}} +\left(\frac{n^*}{n_*}\right)^{\frac{1}{4}}.
\end{eqnarray*}
Inserting this estimate into the expression for $\tilde{A}_j$ yields
\begin{equation}\label{eq:upper bound for tilde_A_j}
   \left|\tilde{A}_j\right|\leq M_2^L,~2\leq j\leq 2^L,
\end{equation}
where $M_2$ is given in Theorem\,\ref{thm:uniqueness}.

Furthermore, the expression given in Lemma~\ref{lem:main_term_d(k)} contains the product  
\begin{equation*}
  \prod_{l=2}^{L} \left( \frac{\left[n_{l-1}(R_{l-1})\right]^{1/4}}{\left[n_l(R_{l-1})\right]^{1/4}}
	+ (-1)^{i'_l} \frac{\left[n_l(R_{l-1})\right]^{1/4}}{\left[n_{l-1}(R_{l-1})\right]^{1/4}} \right).
\end{equation*}
In this product, at least one factor satisfies $i'_l = 1$, meaning at least one term takes the form 
$\left(\frac{\left[n_{l-1}(R_{l-1})\right]^{1/4}}{\left[n_l(R_{l-1})\right]^{1/4}}-\frac{\left[n_l(R_{l-1})\right]^{1/4}}{\left[n_{l-1}(R_{l-1})\right]^{1/4}} \right)$. For such a term we have  
\begin{equation*}
  \begin{aligned}
     \left|\left( \frac{\left[n_{l-1}(R_{l-1})\right]^{1/4}}{\left[n_l(R_{l-1})\right]^{1/4}}-\frac{\left[n_l(R_{l-1})\right]^{1/4}}{\left[n_{l-1}(R_{l-1})\right]^{1/4}} \right)\right|&=\left|\frac{1}{\left[n_l(R_{l-1})\right]^{1/4}\left[n_{l-1}(R_{l-1})\right]^{1/4}}\frac{n_{l-1}(R_{l-1})-n_l(R_{l-1})}{\left[n_{l-1}(R_{l-1})\right]^{1/2}+\left[n_l(R_{l-1})\right]^{1/2}}\right|\\
     &\leq \frac{\varepsilon_0}{2n_{*}}.~
  \end{aligned}
\end{equation*}
Combining this estimate with inequality~\eqref{eq:upper bound for tilde_A_j} yields
\begin{equation}\label{eq:upper bound for tilde_A_j2}
	\left|\tilde{A}_j\right|\leq M_2^{L-1}\frac{\varepsilon_0}{2n_{*}}, ~ 2 \leq j \leq 2^L-1.
\end{equation}

Finally, through direct calculation, we obtain the lower bound
\begin{equation}\label{eq:estimation for tilde A_2^L}
 |\tilde{A}_{2^L}|>2^{L-1}\widetilde{M}_2, 
\end{equation}
where $\widetilde{M}_2$ is given in Theorem\,\ref{thm:uniqueness}.

On the other hand, since
\begin{equation*}
	\begin{aligned}
		\left|\Re\left\{\sin\left(\tilde{\beta}_j k\right)\right\}\right|=\frac{\exp\left(-|\tilde{\beta}_j|\cdot|\Im k|\right)+\exp\left(|\tilde{\beta}_j|\cdot|\Im k|\right) }{2}\left|\sin\left(\tilde{\beta}_j\Re k\right)\right|,
	\end{aligned}
\end{equation*}
it follows that
\begin{equation}\label{eq:bounds for real part of sin tilde beta_j k}
   \frac{1}{2}\exp\left(|\tilde{\beta}_j|\cdot|\Im k|\right) \left|\sin\left(\tilde{\beta}_j\Re k\right)\right| \leq \left|\Re\left\{\sin\left(\tilde{\beta}_j k\right)\right\}\right|\leq \exp\left(|\tilde{\beta}_j|\cdot|\Im k|\right) \left|\sin\left(\tilde{\beta}_j\Re k\right)\right|.
\end{equation}
Consequently, we obtain the following estimates:
\begin{equation}\label{eq:upper and lower estimation for sin}
  \left\{
  \begin{aligned}
   &\left|\Re \left(\sin\left(\tilde{\beta}_jk\right)\right)\right|\leq \exp{\left(\left|\tilde{\beta}_j\right|\cdot\left|\Im k\right|\right)}\left|\sin\left(\tilde{\beta}_j\Re k\right)\right|,~\\
   &\frac{1}{2}\left| \sin\left(\tilde{\beta}_j \Re k\right) \right|\exp{\left(|\tilde{\beta}_j| \cdot |\Im k|\right)}\leq \left| \Re\left(\sin\left(\tilde{\beta}_jk\right)\right) \right|\leq \left|\sin\left(\tilde{\beta}_jk\right)\right|\leq \exp{\left(\left|\tilde{\beta}_j\right|\cdot \left|\Im k\right|\right)}.~
  \end{aligned}
  \right.
\end{equation}

Let $\beta = \min_{2 \leq j \leq 2^L} \bigl\{ \tilde{\beta}_1 - |\tilde{\beta}_j| \bigr\}$. 
From Weyl's law and the Cartwright--Levinson theorem \cite{CCH16,LV15}, we obtain for some 
constant $C_1 > 0$ the estimate
\begin{equation}\label{eq:R(k)}
	|R(d(k))| \leq C_1 \frac{\exp\{\tilde{\beta}_1|\Im k|\}}{k^2}.
\end{equation}

Case $1$. $\tilde{\beta}_1$ and $\tilde{\beta}_{2^L}$ are rationally linearly independent, and $\varepsilon_0\leq \frac{n_{*}M_1}{24M_2^{L-1}}$. We choose  
\begin{equation}\label{eq:Kronecker theorem linear independence set up}
  \begin{aligned}
   \alpha_1=\frac{1}{4},~ \alpha_{2^L}=0,~ \varepsilon_1=\min\left\{\frac{1}{12}, \frac{M_1}{96\pi M_2^L}\right\},~T_1=\frac{24C_1}{M_1}+1,~
  \end{aligned}
\end{equation}
where $C_1$ is the constant defined in \eqref{eq:R(k)}.

Step $1$. By Lemma~\ref{lem:Kronecker_approximation}, there exist $t_1 > T_1$ and integers $p_1,~p_{2^L}$ such that
\begin{equation}\label{eq:linearly independent Kronecker estimation}
	\begin{aligned}
		\left|\tilde{\beta}_jt_1-2\pi p_j-2\pi\alpha_j\right|<2\pi \varepsilon_1,~ \text{for}~j=1,2^L.
	\end{aligned}
\end{equation}

Let
\begin{equation}\label{eq:C2N1}
    C_2 = \max\left\{ \frac{1}{\beta} \ln\left( \frac{24M_2^L}{M_1} \right), T_1 \right\}, \quad
N_1=\left[\frac{\tilde{\beta}_1}{\pi}\sqrt{t_1^2+C_2^2}-\frac{1}{2}\right]+1.
\end{equation}
Then the circle $\left|k\right|=\left(\frac{N_1+\frac{1}{2}}{\tilde{\beta}_1}\pi\right)$ intersects the line $\Re k=-t_1$ at the points $E_1=\left(-t_1,-\sqrt{\left(\frac{N_1+\frac{1}{2}}{\tilde{\beta}_1}\pi\right)^2-t_1^2}\right)$ and $F_1=\left(-t_1,\sqrt{\left(\frac{N_1+\frac{1}{2}}{\tilde{\beta}_1}\pi\right)^2-t_1^2}\right)$,
and intersects the line $\Re k=t_1$ at $E_2=\left(t_1,-\sqrt{\left(\frac{N_1+\frac{1}{2}}{\tilde{\beta}_1}\pi\right)^2-t_1^2}\right)$ and $F_2=\left(t_1,\sqrt{\left(\frac{N_1+\frac{1}{2}}{\tilde{\beta}_1}\pi\right)^2-t_1^2}\right)$.

Then the simple closed curve $\gamma_1$ is composed of the circular arcs $\stackrel{\frown}{E_1E_2}$ and $\stackrel{\frown}{F_1F_2}$,~together with the line segments $\overline{E_1F_1}$ and $\overline{E_2F_2}$,~as shown in Figure 2.
\begin{figure}[htbp]\label{fig:gamma1}
  \centering
  \includegraphics[width=0.8\textwidth]{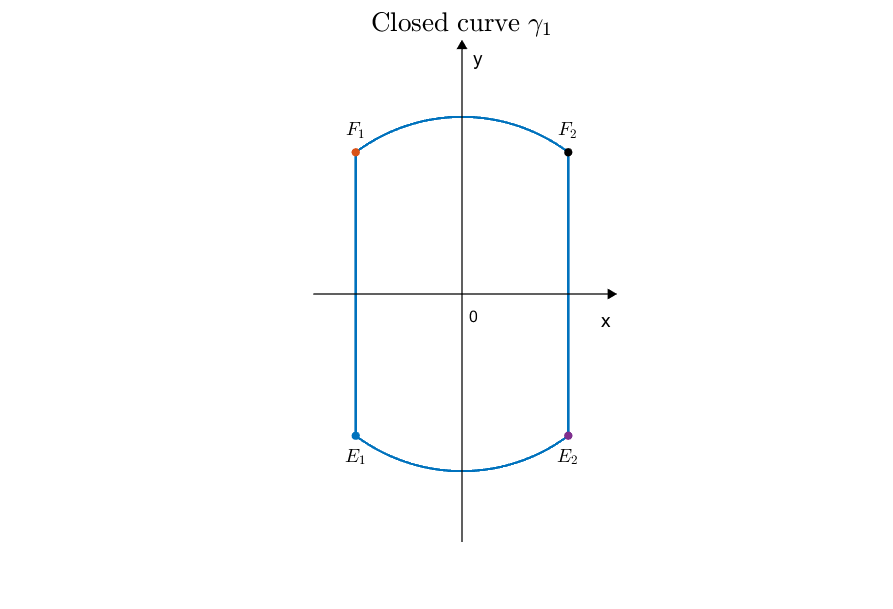}
  \caption{The figure of closed curve $\gamma_1$.}
\end{figure}

Step 2. First, from \eqref{eq:def f g} and Lemma~\ref{lem:y,y'}, we obtain that for any $k \in \mathbb{C}$ with $|k| \geq 1$,
\begin{eqnarray}\label{eq:estimation_f(k)}
	|f(k)| & \leq & \left| k^2 \int_0^{\alpha} \left[n^{(2)} - n^{(1)}\right] y_1(r) y_2(r) \, \mathrm{d} r \right| \nonumber \\
		& \leq & |k|^2 \int_0^{\alpha} |n^{(2)} - n^{(1)}| |y_1(r) y_2(r)| \, \mathrm{d} r \nonumber \\
		& \leq & C |k|^2  \left(\frac{1}{|k|} + \frac{1}{|k|^2} \right)^2 \exp \left( 2|\Im k | \int_{0}^{\alpha} \sqrt{n(s)} \, \mathrm{d} s\right)  \int_0^{1} |n^{(2)} - n^{(1)}| \, \mathrm{d} r  \nonumber \\
		& \leq & C_3 |k|^2  \frac{\exp\left(|\Im k| \left(1 + \delta_L - \varepsilon\right)\right)}{|k|^2}   \nonumber \\
		& \leq & C_3 \exp\left(|\Im k | \left(1 + \delta_L - \varepsilon\right)\right),
\end{eqnarray}
where $C_3$ is a constant that depends only on $n^{(1)}$ and $n^{(2)}$ and is independent of $k$.

Second, we estimate $|d(k)|$ on the circular arcs $\stackrel{\frown}{E_1E_2}$ and $\stackrel{\frown}{F_1F_2}$. Let $d(k) = D(d(k)) + R(d(k))$, where $R(d(k))$ denotes the remainder term. From Lemma~\ref{lem:main_term_d(k)} we obtain
\begin{equation}\label{eq:expression of d(k) on arc independent}
	\begin{aligned}
		\left| d(k) \right|=\left| D(d(k))+R(d(k)) \right|=\left| \frac{1}{2^L k}\sum_{j=1}^{2^L}\tilde{A}_j \sin{\left(\tilde{\beta}_jk\right)}+R(d(k))\right|.
	\end{aligned}
\end{equation}

The following inequality holds (\cite{MP94}):
\begin{equation}\label{eq:estimation of sin on arc}
	\left|\sin\left\{\tilde{\beta}_1k\right\}\right| \geq \frac{1}{4} \exp\left\{|\Im k|\tilde{\beta}_1\right\},
\end{equation}
which is valid for \( |k| = \frac{(N + \frac{1}{2})\pi}{\tilde{\beta}_1} \) with \( N \in \mathbb{Z} \), and consequently on the arcs \(\stackrel{\frown}{E_1E_2}\) and \(\stackrel{\frown}{F_1 F_2}\).

Combining \eqref{eq:upper bound for tilde_A_j}, \eqref{eq:upper and lower estimation for sin}, the choice of $T_1$ in \eqref{eq:Kronecker theorem linear independence set up}, and the choices of $C_2$ and $N_1$ from \eqref{eq:C2N1}, we have
\begin{equation}\label{eq:estimation of part of main term independent case}
  \begin{aligned}
   &\left| \sum_{j=2}^{2^L}\frac{\tilde{A}_j}{2^L k}\sin{\left(\tilde{\beta}_j k\right)} \right|\leq\sum_{j=2}^{2^L}\frac{M_2^L}{2^L |k|}\exp{\left(\left|\tilde{\beta}_j\right||\Im k|\right)}\leq \frac{M_2^L}{|k|}\exp{\left(\max_{2\leq j\leq 2^L}\left|\tilde{\beta}_j\right||\Im k|\right)}<\frac{M_1}{24|k|}\exp{\left(\tilde{\beta}_1 |\Im k|\right)}.~
  \end{aligned}
\end{equation}

On the arcs $\overset{\frown}{E_1E_2}$ and $\overset{\frown}{F_1F_2}$, the definitions of $C_2$ and $N_1$ in \eqref{eq:C2N1} yield
\begin{eqnarray}\label{eq:k and C0 and Im k on arc}
|k|&=&\frac{N_1+\frac{1}{2}}{\tilde{\beta}_1}\pi>\sqrt{t_1^2+C_2^2}>C_2,  \quad |\Im k|\geq \left| \sqrt{\left(\frac{N_1+\frac{1}{2}}{\tilde{\beta}_1}\pi\right)^2-t_1^2}\right|>C_2,    \nonumber\\
C_2 &\geq & \frac{24C_1}{M_1}, \quad C_2\geq \frac{1}{\beta}\ln{\frac{24M_2^L}{M_1}},
\end{eqnarray}
where $C_2$ is the constant defined in \eqref{eq:C2N1}.

On the other hand, combining \eqref{eq:R(k)} with \eqref{eq:k and C0 and Im k on arc} gives
\begin{equation}\label{eq:arc independent parts of first term kill R(d(k))}
	|R(d(k))| \leq \frac{C_1}{|k|}\exp{\left\{\left|\Im k\right|\tilde{\beta}_1\right\}}\frac{1}{|k|}\leq  \frac{M_1}{24|k|}\exp{\left\{\left|\Im k\right|\tilde{\beta}_1\right\}}.
\end{equation}

Substituting \eqref{eq:tilde_A_1}, \eqref{eq:estimation of sin on arc}, \eqref{eq:estimation of part of main term independent case}, and \eqref{eq:arc independent parts of first term kill R(d(k))} into \eqref{eq:expression of d(k) on arc independent} and applying the triangle inequality yields
\begin{eqnarray}\label{eq:tentative estimation of d(k) on arc independent}
		\left| d(k) \right|& \geq & \left| \frac{1}{2^L |k|} \frac{\left|\tilde{A}_1\right|}{4}\exp{\left\{\left|\Im k\right|\tilde{\beta}_1\right\}}-\left|\frac{1}{2^L \left|k\right|}\sum_{j=2}^{2^L}\left|\tilde{A}_j\right|\exp{\left\{\left|\Im k\right| \left|\tilde{\beta}_j\right|\right\}}\right|-\left| R(d(k)) \right|\right| \nonumber \\
		& \geq & \left| \frac{M_1}{8\left|k\right|}\exp{\left\{\left|\Im k\right|\tilde{\beta}_1\right\}}-\frac{M_2^L}{2^L |k|}\sum_{j=2}^{2^L}\exp{\left\{\left|\Im k\right| \max_{2\leq j\leq 2^L}{\left|\tilde{\beta}_j\right|}\right\}}-\frac{C_1}{\left|k\right|^2}\exp{\left\{\left|\Im k\right| \tilde{\beta}_1\right\}} \right| \nonumber\\
&\geq &\frac{M_1}{24|k|}\exp{\left\{\left|\Im k\right| \tilde{\beta}_1\right\}}.
\end{eqnarray}
This gives the lower bound for $|d(k)|$ on the arcs $\overset{\frown}{E_1E_2}$ and $\overset{\frown}{F_1F_2}$.

Third, we estimate $|d(k)|$ from below on the line segments $\overline{E_1F_1}$ and $\overline{E_2F_2}$. From \eqref{eq:linearly independent Kronecker estimation} we obtain
\begin{equation*}
 \sin{\left(\tilde{\beta}_1t_1\right)}>\frac{1}{2},\quad
 \left|\sin{\left(\tilde{\beta}_{2^L}t_1\right)}\right|<\sin{\left(2\pi\varepsilon_1\right)}<\frac{M_1}{24M_2^L}.
\end{equation*}

Combining \eqref{eq:upper bound for tilde_A_j},~\eqref{eq:upper bound for tilde_A_j2},~ \eqref{eq:upper and lower estimation for sin}, the choice of $\varepsilon_1$ and $\alpha_{2^L}$ in \eqref{eq:Kronecker theorem linear independence set up}, and the inequality \eqref{eq:linearly independent Kronecker estimation}, we have
\begin{equation}\label{eq:estimation of parts of main term of dk on line}
  \begin{aligned}
    \left| \Re\left(\sum_{j=2}^{2^L}\frac{\tilde{A}_j}{2^L}\sin{\left(\tilde{\beta}_j k\right)}\right) \right| &\leq \left| \sum_{j=2}^{2^L-1}\frac{M_2^{L-1}\varepsilon_0}{2n_{*}}\frac{\exp{\left(\left|\tilde{\beta}_j\right||\Im k|\right)} }{2^L}\left|\sin{\left(\tilde{\beta}_j\Re k\right)}\right|+M_2^L\exp{\left(\left|\tilde{\beta}_{2^L}\right| |\Im k|\right)}\frac{\sin{\left(\tilde{\beta}_{2^L}\Re k\right)} }{2^L} \right|\\
    &\leq \left| \sum_{j=2}^{2^L-1}\frac{M_2^{L-1}\varepsilon_0}{2n_{*}}\frac{\exp{\left(\left|\tilde{\beta}_j\right||\Im k|\right)} }{2^L}\left|\sin{\left(\tilde{\beta}_jt_1\right)}\right|+M_2^L\exp{\left(\left|\tilde{\beta}_{2^L}\right| |\Im k|\right)}\frac{\left|\sin{\left(\tilde{\beta}_{2^L}t_1\right)}\right| }{2^L} \right|\\
    &\leq M_2^{L-1}\frac{\varepsilon_0}{2n_{*}}\exp{\left( \tilde{\beta}_1 |\Im k|\right)}+M_2^L\frac{\exp{\left( \tilde{\beta}_1 |\Im k|\right)}}{2^L}\sin{\left(2\pi \varepsilon_1\right)}<\frac{M_1}{24}\exp{\left( \tilde{\beta}_1 |\Im k|\right)}
  \end{aligned}
\end{equation}

Moreover, from the choice of $\varepsilon_1$ and $\alpha_1$ in \eqref{eq:Kronecker theorem linear independence set up} together with inequalities \eqref{eq:tilde_A_1},~\eqref{eq:upper and lower estimation for sin} and \eqref{eq:linearly independent Kronecker estimation}, we obtain
\begin{equation}\label{eq:estimation of first term in main term on line independent case}
  \begin{aligned}
    \left| \frac{\tilde{A}_1}{2^L}\sin{\left(\tilde{\beta}_1 k\right)} \right|\geq \left| \Re\left(\frac{\tilde{A}_1}{2^L}\sin{\left(\tilde{\beta}_1 k\right)}\right) \right|\geq \left| \frac{M_1 2^{L-1}}{2^L}\frac{1}{2}\exp{\left(\tilde{\beta}_1|\Im k|\right)}\sin{\left( \tilde{\beta}_1\Re k \right)} \right|\geq \frac{M_1}{8}\exp{\left(\tilde{\beta}_1|\Im k|\right)}.~
  \end{aligned}
\end{equation}
Furthermore, using the choice of $T_1$ in~\eqref{eq:Kronecker theorem linear independence set up}, 
$C_2$ in~\eqref{eq:C2N1}, and inequality~\eqref{eq:R(k)}, we obtain on the line segments 
$\overline{E_1F_1}$ and $\overline{E_2F_2}$ the estimate
\begin{equation*}
\left|R(d(k))\right| \leq \frac{M_1}{24|k|}\exp{\left\{\left|\Im k\right|\tilde{\beta}_1\right\}},
\end{equation*}
which coincides with the bound in~\eqref{eq:arc independent parts of first term kill R(d(k))}.

Substituting \eqref{eq:estimation of parts of main term of dk on line},~\eqref{eq:estimation of first term in main term on line independent case} and \eqref{eq:arc independent parts of first term kill R(d(k))} into \eqref{eq:expression of d(k) on arc independent}, we have
\begin{eqnarray}\label{eq:estimation of d(k) on line independent}
    |d(k)|&=&\left| D(d(k))+R(d(k)) \right|=\frac{1}{|k|}\left| kD(d(k))+kR(d(k)) \right|\geq \frac{1}{|k|}\left| \left| \Re\left(kD(d(k))\right) \right|-\left|kR(d(k))\right| \right| \nonumber\\
    &\geq& \frac{1}{|k|}\left| \left|\Re \frac{\tilde{A}_1}{2^L}\sin\left(\tilde{\beta}_1k\right)\right|-\left| \Re \sum_{j=2}^{2^L}\frac{\tilde{A}_j}{2^L}\sin\left(\tilde{\beta}_jk\right) \right|-\left|kR(d(k))\right| \right| \nonumber\\
    &\geq& \left| \frac{M_1}{8}\exp{\left(\tilde{\beta}_1\left|\Im k\right|\right)}-\frac{M_1}{24}\exp{\left(\tilde{\beta}_1\left|\Im k\right|\right)}-\frac{M_1}{24}\exp{\left(\tilde{\beta}_1\left|\Im k\right|\right)} \right| \nonumber\\
    &\geq& \frac{M_1}{24}\exp{\left(\tilde{\beta}_1\left|\Im k\right|\right)}
  \end{eqnarray}
This gives the lower bound for $|d(k)|$ on $\overline{E_1F_1}$ and $\overline{E_2F_2}$.

Finally, combining \eqref{eq:estimation_f(k)} with the lower bounds \eqref{eq:tentative estimation of d(k) on arc independent} and \eqref{eq:estimation of d(k) on line independent} for $|d(k)|$ on $\gamma_1$, we obtain the estimate
	\begin{equation}\label{eq:estimation of g(k) on gamma1}
			\left|g(k)\right|=\left|\frac{f(k)}{d(k)}\right| \leq \frac{24C_3|k|}{M_1\exp{\left\{\left|\Im k \right|\varepsilon\right\}}}, \quad k\in \gamma_1,
	\end{equation}
where the constant $C_3$ depends only on $n^{(1)}$ and $n^{(2)}$.

Let $G_1$ be the region bounded by $\gamma_1$. Since $f(k)$ and $d(k)$ are analytic and have the same zeros (\cite{CCH16, MP94}), the function $g(k)$ is analytic in the complex plane. In particular, $g(k)$ is continuous on $\overline{G}_1$, and we obtain the estimate
\begin{equation}\label{eq:estimation of |g(k)| on the region bounded by gamma1}
  |g(k)| \leq C (|k|+1), \quad k\in G_1.
\end{equation}

	
Step $3$. Let $T_2 = 2T_1$, keeping $\alpha_1$, $\alpha_{2^L}$, $\varepsilon_1$, and $C_2$ as chosen in Step~1. By Lemma~\ref{lem:Kronecker_approximation}, we can choose $t_2 > \max\{t_1, T_2\}$ such that there exist integers $p^{(2)}_1$, $p^{(2)}_{2^L}$ satisfying
\begin{equation*}
    \left|\tilde{\beta}_j t_2 - 2\pi p^{(2)}_j - 2\pi\alpha_j\right| < 2\pi \varepsilon_1,\quad \text{for } j=1,2^L.
\end{equation*}

Now set $N_2=\left[\frac{\tilde{\beta}_1}{\pi}\sqrt{t_2^2+C_2^2}-\frac{1}{2}\right]+1$. The circle $\left|k\right|=\frac{N_2+\frac{1}{2}}{\tilde{\beta}_1}\pi$ meets the line $\Re k=-t_2$ at the points ${E}_{21}=\left(-t_2,-\sqrt{\left(\frac{N_2+\frac{1}{2}}{\tilde{\beta}_1}\pi\right)^2-t_2^2}\right)$ and ${F}_{21}=\left(-t_2,\sqrt{\left(\frac{N_2+\frac{1}{2}}{\tilde{\beta}_1}\pi\right)^2-t_2^2}\right)$,
and meets the line $\Re k=t_2$ at ${E}_{22}=\left(t_2,-\sqrt{\left(\frac{N_2+\frac{1}{2}}{\tilde{\beta}_1}\pi\right)^2-t_2^2}\right)$ and ${F}_{22}=\left(t_2,\sqrt{\left(\frac{N_2+\frac{1}{2}}{\tilde{\beta}_1}\pi\right)^2-t_2^2}\right)$.
The simple closed curve $\gamma_2$ is then formed by the circular arcs $\stackrel{\frown}{{E}_{21}{E}_{22}}$ and $\stackrel{\frown}{{F}_{21} {F}_{22}}$, together with the line segments $\overline{{E}_{21}{F}_{21}}$ and $\overline{{E}_{22}{F}_{22}}$.

Following the same argument as in Step~2, we obtain the estimate for $|g(k)|$ on $\gamma_2$:
\begin{equation*}
  \left|g(k)\right|<\frac{24{C}_3|k|}{M_1\exp{\left\{\left|\Im k\right|\varepsilon\right\} }}, \quad k\in \gamma_2 .
\end{equation*}

\begin{figure}[htbp]\label{fig:gamma2}
	\centering
	\includegraphics[width=0.6\textwidth]{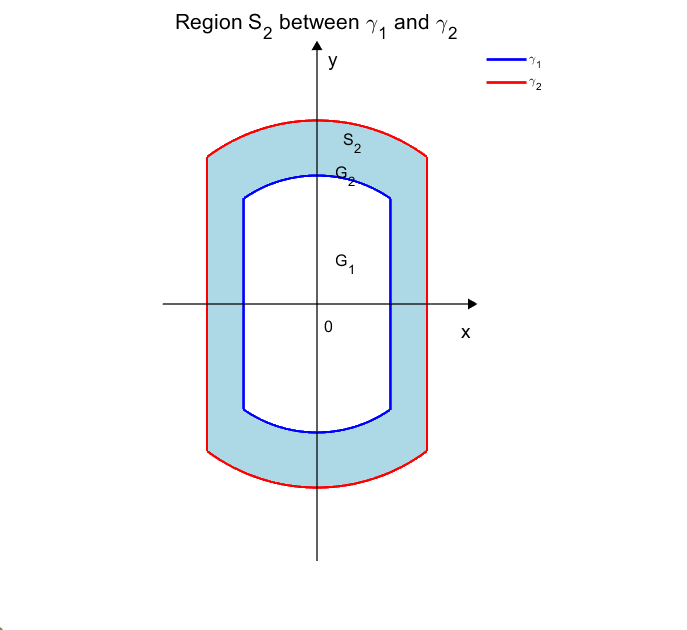}
	\caption{The figure of $S_2$ bounded by $\gamma_1$ and $\gamma_2$.}
\end{figure}
Let ${S}_2$ be the region bounded by $\gamma_1$ and $\gamma_2$, and define $G_2 = G_1 \cup S_2$, as shown in Figure~3. Since $g(k)$ is analytic on $S_2$ and $k$ has no zeros in $S_2$, the function $g(k)/k$ is analytic on the closure of $S_2$. By the maximum modulus principle (\cite{SS10}), the maximum of $|g(k)/k|$ on $S_2$ is attained on the boundary $\gamma_1 \cup \gamma_2$. Combining this result with the estimate \eqref{eq:estimation of |g(k)| on the region bounded by gamma1} on $G_1$, we obtain
\begin{equation}\label{eq:estimation of |g(k)| on region bounded by gamma2}
 |g(k)| \leq C \leq  C(|k|+1), \quad k\in G_2.
\end{equation}

Similarly, for each $s \geq 2$, define $T_s = 2^{s-1}T_1$. Then the estimate \eqref{eq:estimation of |g(k)| on region bounded by gamma2} holds on the closure of the region $G_{s}$ (bounded by $\gamma_s$). Since $G_{s}$ exhaust the complex plane as $s \to \infty$, we conclude that \eqref{eq:estimation of |g(k)| on region bounded by gamma2} is valid on all of $\mathbb{C}$.

	
Step $4$. Since $g(k)$ is an entire function, it admits a Taylor expansion (\cite{SS10}):
\begin{equation*}
	g(k)=\sum_{j=0}^{\infty}\frac{g^{(j)}(0)}{j!}k^j,
\end{equation*}
where $k \in \mathbb{C}$, and $g^{(j)}(0)$ denotes the value of the $j$-th derivative of $g$ at $0$. Furthermore, by the Cauchy integral formula \cite{SS10}, we have
\begin{equation}\label{eq:Cauchy integral formula for gj(0)}
	g^{(j)}(0)=\int_{|\xi|=R}\frac{g(\xi)}{\xi^{j+1}}\mathrm{d}\xi.
\end{equation}
Substituting the estimate \eqref{eq:estimation of |g(k)| on region bounded by gamma2} into \eqref{eq:Cauchy integral formula for gj(0)} yields	
\begin{equation*}
	|g^{(j)}(0)|\leq \int_{|\xi|=R}\left|\frac{g(\xi)}{\xi^{j+1}}\right|\mathrm{d}\xi\leq \int_{|\xi|=R}\frac{C(|\xi|+1)}{R^{j+1}}\mathrm{d}\xi\leq \frac{C\pi}{R^{j-1}},
\end{equation*}
which implies that $|g^{(j)}(0)| \to 0$ as $R \to \infty$ for all $j \geq 2$. Therefore, $g(k)$ must be a polynomial of degree at most one.
	
Next, take $v_j = \mathrm{i} N_j$, where $N_j$ is defined by induction. It follows from \eqref{eq:estimation of g(k) on gamma1} that
\begin{equation*}
    \left|g(v_j)\right|\leq \frac{24{C}_3|\mathrm{i}N_j|}{M_1\exp{\left\{\left|\Im \left(\mathrm{i}N_j\right) \right|\varepsilon\right\}}}=\frac{24{C}_3|N_j|}{M_1\exp{\left\{\left|N_j \right|\varepsilon\right\}}} \to 0, \quad j\to \infty,
\end{equation*}
which implies that $g(k) \equiv 0$. Together with \eqref{eq:def f g}, it follows that $f(k) \equiv 0$ for all $k \in \mathbb{C}$. In particular, $f(\mathrm{i}s) = 0$ for every $s \in \mathbb{R}$. In \eqref{eq:y_1,y_2a}, the equation we study is
$y_{j}'' + k^2 y_{j} = 0$,~$j=1,2$.
Substituting $k = \mathrm{i}s,\ s \in \mathbb{R}$ into (3.10), we obtain the form of the equation appearing in Lemma~\ref{lem:completeness}, then using Lemma~\ref{lem:completeness}, we therefore conclude that $n^{(1)} \equiv n^{(2)}$.\\

	
Case $2$. $\tilde{\beta}_1,~\tilde{\beta}_{2^L}$ are rational linear dependent. We have  
\begin{equation}\label{eq:L layer linear dependence relation}
  \tilde{\beta}_{2^L}=1-\delta_L=\frac{\hat{q}}{\hat{p}}\left(1+\delta_L\right)=\frac{\hat{q}}{\hat{p}}\tilde{\beta}_1,~
\end{equation}
where $0<|\hat{q}|<\hat{p}$.

(a) $\delta_L>1$, $n_L(1)>1$, and $\varepsilon_0\leq \frac{M_1\widetilde{M}_1n_{*}}{12M_2^{L-1}}$. The expression for $\tilde{A}_j$ in Lemma~\ref{lem:main_term_d(k)} gives
\begin{equation*}\label{eq:L layer subcase  first set up}
    \tilde{\beta}_{2^L}=1-\delta_L<0,~0<-\hat{q}<\hat{p},~\tilde{A}_1>0.~
\end{equation*}

Step $1$. Choose  
\begin{eqnarray}\label{eq:L layer linear dependence Kronecker set up}
     \alpha_1&=&1+\frac{1}{\hat{p}+|\hat{q}|}, \quad \varepsilon_1=\min\left\{\frac{1}{2(\hat{p}+|\hat{q}|)},~\frac{4|\hat{q}|(\hat{p}-|\hat{q}|)}{\hat{p}+|\hat{q}|},~\frac{4\hat{p}(\hat{p}-|\hat{q}|)}{\hat{p}+|\hat{q}|}, \frac{|\hat{q}|}{2\hat{p}(\hat{p}+|\hat{q}|)}\right\}, \nonumber~\\
     \widetilde{M}_1&=&\min\left\{ \left|\sin\left(\frac{\hat{p}-|\hat{q}|}{\hat{p}+|\hat{q}|}\pi+2\hat{p}\varepsilon_1\right)\right|,~\left|\sin\left(\frac{\hat{p}-|\hat{q}|}{\hat{p}+|\hat{q}|}\pi-2\hat{p}\varepsilon_1\right)\right|\right\},\\
     \widetilde{T}_1&=&\max\left\{\frac{24C_1}{M_1}+1,~\frac{4C_1}{M_1\widetilde{M}_1}+1\right\}. \nonumber
\end{eqnarray}
From Lemma~\ref{lem:Kronecker_approximation} and \eqref{eq:L layer linear dependence relation}, 
there exist $t_1 > \widetilde{T}_1$ and integers $p_1$ such that
\begin{equation}\label{eq:L layer linear dependence subcase a Kronecker inequality}
\left| \tilde{\beta}_1t_1-2\pi\alpha_1\hat{p}-2\pi p_1\hat{p} \right|\leq 2\pi\varepsilon_1\hat{p},\quad
\left| \tilde{\beta}_{2^L}t_1-2\pi\alpha_1\hat{q}-2\pi p_1\hat{q} \right|\leq 2\pi\varepsilon_1|\hat{q}|.
\end{equation}

Let $\widetilde{N}_1=\left[\frac{\tilde{\beta}_1}{\pi}\sqrt{t_1^2+C_2^2}-\frac{1}{2}\right]+1$, where $C_2=\max\left\{\frac{1}{\beta}\ln\left(\frac{24M_2^L}{M_1}\right)+1,\widetilde{T}_1\right\}$. The circle $|k|=\frac{\widetilde{N}_1+\frac{1}{2}}{\tilde{\beta}_1}\pi$ meets the line $\Re k=-t_1$ at the points  
$\widetilde{E}_1=\left(-t_1,-\sqrt{\left(\frac{\widetilde{N}_1+\frac{1}{2}}{\tilde{\beta}_1}\pi\right)^2-t_1^2}\right)$ and  
$\widetilde{F}_1=\left(-t_1,\sqrt{\left(\frac{\widetilde{N}_1+\frac{1}{2}}{\tilde{\beta}_1}\pi\right)^2-t_1^2}\right)$,  
and meets the line $\Re k=t_1$ at  
$\widetilde{E}_2=\left(t_1,-\sqrt{\left(\frac{\widetilde{N}_1+\frac{1}{2}}{\tilde{\beta}_1}\pi\right)^2-t_1^2}\right)$ and  
$\widetilde{F}_2=\left(t_1,\sqrt{\left(\frac{\widetilde{N}_1+\frac{1}{2}}{\tilde{\beta}_1}\pi\right)^2-t_1^2}\right)$.  
The simple closed curve $\widetilde{\gamma}_1$ is then formed by the circular arcs $\overset{\frown}{\widetilde{E}_1\widetilde{E}_2}$ and $\overset{\frown}{\widetilde{F}_1\widetilde{F}_2}$, together with the line segments $\overline{\widetilde{E}_1\widetilde{F}_1}$ and $\overline{\widetilde{E}_2\widetilde{F}_2}$.

Step 2. On the circular arcs $\overset{\frown}{\widetilde{E}_1\widetilde{E}_2}$ and $\overset{\frown}{\widetilde{F}_1\widetilde{F}_2}$, the same derivation as in Case~1 yields an estimate identical to~\eqref{eq:tentative estimation of d(k) on arc independent}. We now turn to estimating $|d(k)|$ on the line segments $\overline{\widetilde{E}_1\widetilde{F}_1}$ and $\overline{\widetilde{E}_2\widetilde{F}_2}$.
     
From the expression of $\tilde{A}_j$ in Lemma~\ref{lem:main_term_d(k)}, together with~\eqref{eq:L layer linear dependence Kronecker set up} and~\eqref{eq:L layer linear dependence subcase a Kronecker inequality}, we derive
\begin{equation}\label{eq:L layer linear dependence subcase a same sign}
    \begin{aligned}
      \left(\tilde{A}_1\sin\left(\tilde{\beta}_1t_1\right)\right)\cdot\left(\tilde{A}_{2^L}\sin\left(\tilde{\beta}_{2^L}t_1\right)\right)>0.~
    \end{aligned}
\end{equation}
Combining \eqref{eq:upper bound for tilde_A_j2}, \eqref{eq:upper and lower estimation for sin}, and the choice of $\varepsilon_0$, we obtain
\begin{eqnarray}\label{eq:L layer estimation for linear dependence subcase a part of main term}
      &&\left|\Re\left(\sum_{j=2}^{2^L-1}\frac{\tilde{A}_j}{2^L}\sin\left(\tilde{\beta}_j k\right)\right)\right| \leq\left| \sum_{j=2}^{2^L-1}\frac{\tilde{A}_j}{2^L}\exp{\left(\left|\tilde{\beta}_j\right||\Im k|\right)}\sin{\left(\tilde{\beta}_j \Re k\right)} \right| \nonumber\\
      &\leq& \sum_{j=2}^{2^L-1}\frac{M_2^{L-1}\varepsilon_0}{2n_{*}}\frac{1}{2^L}\exp{\left(\left|\tilde{\beta}_j\right|\cdot|\Im k|\right)} \leq \frac{M_1\widetilde{M}_1}{12}\exp{\left(\tilde{\beta}_1|\Im k|\right)}.~
\end{eqnarray}  
Based on the choice of $\widetilde{T}_1$, $C_2$, and $\widetilde{N}_1$, we have
\begin{equation}\label{eq:L layer linear dependence subcase a remainder estimation}
    \begin{aligned}
      \left|R(d(k))\right|\leq \frac{C_1\exp{\left(\tilde{\beta}_1|\Im k|\right)}}{|k|^2}<\frac{M_1\widetilde{M}_1}{12|k|}\exp{\left(\tilde{\beta}_1|\Im k|\right)}.~
    \end{aligned}
\end{equation}
It follows from \eqref{eq:tilde_A_1},  \eqref{eq:upper and lower estimation for sin}, \eqref{eq:L layer linear dependence subcase a Kronecker inequality},  \eqref{eq:L layer linear dependence subcase a same sign}, \eqref{eq:L layer estimation for linear dependence subcase a part of main term}, and \eqref{eq:L layer linear dependence subcase a remainder estimation} that
\begin{eqnarray}\label{eq:L layer dependence subcase a dk estimation on line}
      |d(k)|&=&\frac{1}{|k|}\left|kd(k)\right|=\frac{1}{|k|}\left|kD(d(k))+kR(d(k))\right|\geq \frac{1}{|k|}\left| \left|\Re\left(kD(d(k))\right)\right|-\left|kR(d(k))\right|\right| \nonumber\\
      &=&\frac{1}{|k|}\left| \left|\Re\left(\frac{\tilde{A}_1}{2^L}\sin\left(\tilde{\beta}_1k\right)+\frac{\tilde{A}_{2^L}}{2^L}\sin\left(\tilde{\beta}_{2^L}k\right)+\sum_{j=2}^{2^L-1}\frac{\tilde{A}_j}{2^L}\sin\left(\tilde{\beta}_jk\right)\right)\right|-\left|kR(d(k))\right| \right| \nonumber \\
      &\geq&\frac{1}{|k|}\left| \left|\Re\left(\frac{\tilde{A}_1}{2^L}\sin\left(\tilde{\beta}_1k\right)+\frac{\tilde{A}_{2^L}}{2^L}\sin\left(\tilde{\beta}_{2^L}k\right)\right)\right|-\left|\Re\left(\sum_{j=2}^{2^L-1}\frac{\tilde{A}_j}{2^L}\sin\left(\tilde{\beta}_jk\right)\right)\right|-|kR(d(k))| \right| \nonumber \\
      &\geq& \frac{1}{|k|}\left| \left|\frac{\tilde{A}_1}{2^L}\sin\left(\tilde{\beta}_1t_1\right)\frac{1}{2}\exp{\left(\tilde{\beta}_1|\Im k|\right)}+\frac{\tilde{A}_{2^L}}{2^L}\sin\left(\tilde{\beta}_{2^L}t_1\right)\frac{1}{2}\exp{\left(\tilde{\beta}_{2^L}|\Im k|\right)}\right|\right. \nonumber \\
      &-&\left.\frac{M_1\widetilde{M}_1}{12}\exp{\left(\tilde{\beta}_1|\Im k|\right)}-\frac{M_1\widetilde{M}_1}{12}\exp{\left(\tilde{\beta}_1|\Im k|\right)} \right| \nonumber \\
      &\geq&\frac{M_1\widetilde{M}_1}{12|k|}\exp{\left(\tilde{\beta}_1|\Im k|\right)}.~
\end{eqnarray}
Combining \eqref{eq:tentative estimation of d(k) on arc independent} with \eqref{eq:L layer dependence subcase a dk estimation on line}, we obtain
\begin{equation*}
    |d(k)| \geq \frac{\widetilde{C}}{|k|}\exp{\left(\tilde{\beta}_1|\Im k|\right)},\quad \widetilde{C}=\min\left\{\frac{M_1\widetilde{M}_1}{12},~\frac{M_1}{24}\right\},~
\end{equation*}
for $k\in \tilde{\gamma}_1$. The rest of the proof proceeds similarly to Steps~3 through~5 in Case~1.
      
(b) $\delta_L<1$, $n_L(1)>1$, and $\varepsilon_0\leq \frac{n_{*}M_1}{24M_2^{L-1}}$. The expressions for $\tilde{A}_j$ and $\tilde{\beta}_j$ in Lemma~\ref{lem:main_term_d(k)} yield  
\begin{equation}\label{eq:L layer dependence subcase b first set up}
    \begin{aligned}
        \tilde{A}_1>0,\quad \tilde{A}_{2^L}>0,\quad \tilde{\beta}_1>0,\quad \tilde{\beta}_{2^L}>0,\quad 0<\hat{q}<\hat{p}.~
    \end{aligned}
\end{equation}

Step $1$. Choose  
\begin{equation}\label{eq:L layer dependence contour construction set up}
    \begin{aligned}
        \alpha_1=\frac{1}{4\hat{p}},~\varepsilon_1=\min\left\{\frac{1}{6\hat{p}},~\frac{1}{4}\left(\frac{1}{\hat{q}}-\frac{1}{\hat{p}}\right)\right\},~\widetilde{T}_1=\frac{24C_1}{M_1}+1,~C_2=\max\left\{\frac{1}{\beta}\ln\left(\frac{24M_2^L}{M_1}\right),~\widetilde{T}_1\right\},~
    \end{aligned}
\end{equation}
where $C_1$ is the constant introduced in \eqref{eq:R(k)} and $\beta$ is as defined in Case $1$. By Lemma~\ref{lem:Kronecker_approximation} and \eqref{eq:L layer dependence contour construction set up}, there exist $t_1>\widetilde{T}_1$ and an integer $p$ such that  
\begin{equation}\label{eq:L layer dependence subcase b Kronecker inequality}
\left|\tilde{\beta}_1t_1-2\hat{p}\alpha_1\pi-2p\hat{p}\pi\right|\leq 2\hat{p}\pi\varepsilon_1,\qquad
\left|\tilde{\beta}_{2^L}t_1-\frac{\hat{q}}{\hat{p}}\frac{\pi}{2}-2p\hat{q}\pi\right|\leq 2|\hat{q}|\pi\varepsilon_1.~
\end{equation}

Let $\widetilde{N}_1=\left[\frac{\tilde{\beta}_1}{\pi}\sqrt{t_1^2+C_2^2}-\frac{1}{2}\right]+1$. The circle $|k|=\frac{\widetilde{N}_1+\frac{1}{2}}{\tilde{\beta}_1}\pi$ meets the line $\Re k=-t_1$ at the points $\tilde{E}_1=\left(-t_1,-\sqrt{\left(\frac{\widetilde{N}_1+\frac{1}{2}}{\tilde{\beta}_1}\pi\right)^2-t_1^2}\right)$ and $\tilde{F}_1=\left(-t_1,\sqrt{\left(\frac{\widetilde{N}_1+\frac{1}{2}}{\tilde{\beta}_1}\pi\right)^2-t_1^2}\right)$; it meets the line $\Re k=t_1$ at the points $\tilde{E}_2=\left(t_1,-\sqrt{\left(\frac{\widetilde{N}_1+\frac{1}{2}}{\tilde{\beta}_1}\pi\right)^2-t_1^2}\right)$ and $\tilde{F}_2=\left(t_1,\sqrt{\left(\frac{\widetilde{N}_1+\frac{1}{2}}{\tilde{\beta}_1}\pi\right)^2-t_1^2}\right)$. The simple closed curve $\widetilde{\gamma}_1$ is then formed by the circular arcs $\overset{\frown}{\widetilde{E}_1\widetilde{E}_2}$ and $\overset{\frown}{\widetilde{F}_1\widetilde{F}_2}$, together with the line segments $\overline{\widetilde{E}_1\widetilde{F}_1}$ and $\overline{\widetilde{E}_2\widetilde{F}_2}$.

Step $2$. On the circular arcs $\overset{\frown}{\widetilde{E}_1\widetilde{E}_2}$ and $\overset{\frown}{\widetilde{F}_1\widetilde{F}_2}$, following the same argument as in Case 1, we obtain for $|d(k)|$ an estimate identical to \eqref{eq:tentative estimation of d(k) on arc independent}. We next derive a lower bound for $|d(k)|$ on the line segments $\overline{\widetilde{E}_1\widetilde{F}_1}$ and $\overline{\widetilde{E}_2\widetilde{F}_2}$.

From \eqref{eq:L layer dependence subcase b Kronecker inequality} we obtain
\begin{equation}\label{eq:L layer dependence subcase b beta1 and 2L sin bound}
\sin\left(\tilde{\beta}_{2^L}t_1\right)>0,\quad
\frac{1}{2}<\sin\left(\tilde{\beta}_1t_1\right)<1. 
\end{equation}
Combining \eqref{eq:L layer dependence subcase b first set up} and \eqref{eq:L layer dependence subcase b beta1 and 2L sin bound} yields
\begin{equation}\label{eq:L layer dependence subcase b coefficient sin same sign}
    \begin{aligned}
     \left(\tilde{A}_1\sin\left(\tilde{\beta}_1t_1\right)\right)\cdot\left(\tilde{A}_{2^L}\sin\left(\tilde{\beta}_{2^L}t_1\right)\right)>0.~
    \end{aligned}
\end{equation}
Using \eqref{eq:tilde_A_1}, \eqref{eq:upper and lower estimation for sin} and \eqref{eq:L layer dependence subcase b beta1 and 2L sin bound}, it follows immediately that
\begin{eqnarray}\label{eq:L layer dependence subcase b sin beta1 estimation}
\left|\Re\left(\tilde{A}_1\sin\left(\tilde{\beta}_1k\right)\right)\right|& \geq & \frac{1}{2}\left|\tilde{A}_1\right| \left|\sin\left(\tilde{\beta}_1\Re k\right)\right|\exp{\left(\tilde{\beta}_1|\Im k|\right)}  \nonumber\\
     &=&\frac{1}{2}\left|\tilde{A}_1\right| \left|\sin\left(\tilde{\beta}_1t_1\right)\right|\exp{\left(\tilde{\beta}_1|\Im k|\right)} \nonumber\\
     &\geq &\frac{2^L M_1}{8}\, \exp{\left(\tilde{\beta}_1|\Im k|\right)}.~
\end{eqnarray}
Based on \eqref{eq:upper bound for tilde_A_j2}, \eqref{eq:upper and lower estimation for sin}, and the jump condition for $n$ stated in the theorem, we obtain the estimate
  \begin{eqnarray}\label{eq:L layer dependence estimation for part two of main term on the line}
     \left| \Re\left(\sum_{j=2}^{2^L-1}\tilde{A}_j\sin\left(\tilde{\beta}_j k\right)\right) \right|&\leq &\sum_{j=2}^{2^L-1}\left|\tilde{A}_j\right|\exp{\left(|\tilde{\beta}_j| |\Im k|\right)}\leq \sum_{j=2}^{2^L-1}M_2^{L-1}\frac{\varepsilon_0}{2n_{*}}\exp{\left(|\tilde{\beta}_j| |\Im k|\right)} \nonumber\\
     &\leq &2^L M_2^{L-1}\frac{\varepsilon_0}{2n_{*}}\exp{\left(|\tilde{\beta}_j| |\Im k|\right)}\leq \frac{M_1 2^L}{24}\exp{\left(\tilde{\beta}_1 |\Im k|\right)}.~
  \end{eqnarray}

Following the same derivation as in case $1$, we obtain on $\widetilde{\gamma}_1$ an estimate for $|R(d(k))|$ that coincides with inequality \eqref{eq:arc independent parts of first term kill R(d(k))}.
Putting together \eqref{eq:arc independent parts of first term kill R(d(k))}, \eqref{eq:full_main_term_dk}, \eqref{eq:L layer dependence subcase b coefficient sin same sign}, \eqref{eq:L layer dependence subcase b sin beta1 estimation}, and \eqref{eq:L layer dependence estimation for part two of main term on the line}, we immediately obtain
\begin{eqnarray}\label{eq:L layer dependence subcase b estimation of dk on line}
      |d(k)|&=&\frac{1}{|k|}\left|kd(k)\right|\geq \frac{1}{|k|}\left| \left|\Re\left(kD(d(k))\right)\right|-\left|kR(d(k))\right| \right|  \nonumber\\
      &=&\frac{1}{|k|}\left| \left|\Re\left(\sum_{j=1}^{2^L}\frac{\tilde{A}_j}{2^L}\sin\left(\tilde{\beta}_jk\right)\right)\right|-\left|kR(d(k))\right| \right|  \nonumber \\
      &\geq& \frac{1}{|k|}\left| \left|\Re\left(\frac{\tilde{A}_1}{2^L}\sin\left(\tilde{\beta}_1k\right)+\frac{\tilde{A}_{2^L}}{2^L}\sin\left(\tilde{\beta}_{2^L}k\right)\right)\right|-\left|\Re\left(\sum_{j=2}^{2^L-1}\frac{\tilde{A}_j}{2^L}\sin\left(\tilde{\beta}_jk\right)\right)\right|-|kR(d(k))| \right| \nonumber \\
      &\geq& \frac{1}{|k|}\left| \left|\Re\left(\frac{\tilde{A}_1}{2^L}\sin\left(\tilde{\beta}_1k\right)\right)\right|-\frac{M_1}{24}\exp{\left(\tilde{\beta}_1|\Im k|\right)}-\frac{M_1}{24}\exp{\left(\tilde{\beta}_1|\Im k|\right)} \right| \nonumber \\
      &\geq& \frac{1}{|k|}\left|\frac{M_1}{8}\exp{\left(\tilde{\beta}_1|\Im k|\right)}-\frac{M_1}{24}\exp{\left(\tilde{\beta}_1|\Im k|\right)}-\frac{M_1}{24}\exp{\left(\tilde{\beta}_1|\Im k|\right)}  \right| \nonumber\\
      &=&\frac{M_1}{24|k|}\exp{\left(\tilde{\beta}_1|\Im k|\right)},~
    \end{eqnarray}
which holds on the line segments $\overline{\tilde{E}_1\tilde{F}_1}$ and $\overline{\tilde{E}_2\tilde{F}_2}$.~

Now, from \eqref{eq:estimation_f(k)},~\eqref{eq:tentative estimation of d(k) on arc independent} and \eqref{eq:L layer dependence subcase b estimation of dk on line},~we obtain
  \begin{equation*}
     |g(k)|\leq \frac{C_3\exp{\left( \left(\tilde{\beta}_1-\varepsilon\right)|\Im k| \right)}}{\frac{M_1}{24|k|}\exp{\left(\tilde{\beta}_1|\Im k|\right)}}=\frac{24C_3|k|}{M_1\exp{\left(\varepsilon |\Im k|\right)}}.~
  \end{equation*}
The rest of the proof proceeds similarly to Steps~3 through~5 in Case~1, which completes the proof.
\end{proof}


\section{Proof of theorem \ref{thm:uniqueness_C1}}

To prove Theorem \ref{thm:uniqueness_C1}, we first prove the following lemmas.

\begin{lemma}\label{lem:C2 approximate C1}
	A function $f \in C[0,1]$ is uniformly approximable by $C^{2}[0,1]$ functions with uniformly bounded first and second derivatives if and only if $f\in C^{1,1}[0,1]$.
\end{lemma}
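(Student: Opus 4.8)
The plan is to prove the two implications separately: the ``if'' direction by mollification of a suitable extension, and the ``only if'' direction by Arzel\`a--Ascoli applied to the first derivatives of the approximating family.

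\textbf{From $f\in C^{1,1}[0,1]$ to approximability.} Suppose $f'$ is Lipschitz with constant $M$. First I would extend $f$ to the larger interval $[-1,2]$ while preserving this bound: define $\tilde f'$ on $[-1,2]$ to be $f'(0)$ for $t<0$, $f'(t)$ for $t\in[0,1]$, and $f'(1)$ for $t>1$ (still Lipschitz with constant $M$), and set $\tilde f(x)=f(0)+\int_0^x\tilde f'(t)\,dt$, a $C^{1,1}$ extension of $f$. Fix a nonnegative mollifier $\phi\in C_c^\infty(-1,1)$ with $\int\phi=1$, put $\phi_\varepsilon(t)=\varepsilon^{-1}\phi(t/\varepsilon)$, and define $f_\varepsilon=\tilde f*\phi_\varepsilon$ on $[0,1]$ for $0<\varepsilon<\tfrac12$. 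Each $f_\varepsilon$ is $C^\infty$; from $f_\varepsilon'=\tilde f'*\phi_\varepsilon$ we get $\|f_\varepsilon'\|_\infty\le\|\tilde f'\|_\infty=\|f'\|_\infty$; and since $\tilde f'$ is Lipschitz it is absolutely continuous with a.e.\ derivative bounded by $M$, so $f_\varepsilon''=(\tilde f')'*\phi_\varepsilon$ satisfies $\|f_\varepsilon''\|_\infty\le M$. Finally $f_\varepsilon\to f$ uniformly on $[0,1]$ because $\tilde f$ is uniformly continuous, which exhibits the desired approximating family with uniformly bounded first and second derivatives.

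\textbf{From approximability to $f\in C^{1,1}[0,1]$.} Suppose $f_j\in C^2[0,1]$ with $f_j\to f$ uniformly and $\|f_j'\|_\infty\le K$, $\|f_j''\|_\infty\le K$ for all $j$. Then $\{f_j'\}$ is uniformly bounded, and since $|f_j'(x)-f_j'(y)|\le\|f_j''\|_\infty|x-y|\le K|x-y|$ it is equi-Lipschitz, hence equicontinuous. By Arzel\`a--Ascoli some subsequence $f_{j_k}'$ converges uniformly to a function $g\in C[0,1]$, and $g$ inherits the bound $|g(x)-g(y)|\le K|x-y|$. Passing to the limit in $f_{j_k}(x)=f_{j_k}(0)+\int_0^x f_{j_k}'(t)\,dt$ (using uniform convergence on both sides) gives $f(x)=f(0)+\int_0^x g(t)\,dt$; since $g$ is continuous, $f$ is continuously differentiable with $f'=g$, and $f'$ is Lipschitz, so $f\in C^{1,1}[0,1]$.

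\textbf{Main obstacle.} The argument is essentially routine, and the only point requiring genuine care lies in the ``if'' direction: producing an approximating family whose second-derivative bound is uniform \emph{up to the endpoints} of $[0,1]$. Mollifying $f$ directly on $[0,1]$ would leave $f_\varepsilon$ uncontrolled near $0$ and $1$; the extension step resolves this, because extending $f'$ by constants beyond the interval keeps its Lipschitz constant unchanged and lets the convolution estimates hold globally. Everything else (the uniform convergence, the Lipschitz stability under uniform limits, and the limit passage in the fundamental theorem of calculus) is standard.
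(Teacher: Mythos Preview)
Your proof is correct and follows essentially the same route as the paper: the constant extension of $f'$ followed by mollification for the ``if'' direction, and Arzel\`a--Ascoli plus the fundamental theorem of calculus for the ``only if'' direction. The only cosmetic difference is that the paper mollifies $\tilde f'$ directly and then integrates to obtain $f_j$, bounding $|g_j'|$ via a one-line Lipschitz estimate on the convolution, whereas you mollify $\tilde f$ itself and pass through the absolute-continuity identity $f_\varepsilon''=(\tilde f')'*\phi_\varepsilon$; these are equivalent.
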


\begin{proof} Step 1. Suppose that $f \in C^{1,1}[0,1]$. By definition, this means $f'$ is Lipschitz continuous, i.e., there exists a constant $L > 0$ such that
\begin{equation*}
	|f'(x_1) - f'(x_2)| \leq L |x_1 - x_2|, \quad \text{for all } x_1, x_2 \in [0,1].
\end{equation*}
In particular, this implies that $f'$ is uniformly continuous.

Let $\tilde{f}'$ be an extension of $f'$ on $\mathbb{R}$ as
\begin{equation*}
	\tilde{f}'(x) = 	
	\begin{cases}
		 f'(0),  & x< 0,             \\
		 f'(x),           & 0 \leq x \leq 1, \\
		 f'(1),  & x>1,
	\end{cases}
\end{equation*}
and define a sequence of functions $\{g_j\}$ by
\begin{equation}\label{eq:modified sequence}
	g_j(x) = (\tilde{f}' \ast \eta_{\frac{1}{j}})(x),
\end{equation}
where $\eta_{\frac{1}{j}}(x) = j\eta(jx)$ is a standard mollifier, and $\eta$ is a smooth bump function given by
\begin{equation*}
	\eta(x) =
	\begin{cases}
		C \exp\left(\dfrac{1}{x^2 - 1}\right), & |x| < 1, \\
		0, & |x| \geq 1.
	\end{cases}
\end{equation*}
Here, $C > 0$ is a normalization constant chosen so that $\int_{\mathbb{R}} \eta(x) \, dx = 1$.

Since $f'$ is uniformly continuous on $[0,1]$, the extension $\tilde{f}'$ defined in \eqref{eq:modified sequence} is uniformly continuous on $\mathbb{R}$. By the properties of the mollification (see \cite{Eva10}), the sequence $g_j$ converges uniformly to $\tilde{f}'$ on $\mathbb{R}$. This implies that the sequence $\{g_j\}$ is uniformly bounded; that is, there exists a constant $M$ such that
\begin{equation}\label{eq:bound for g_j}
	|g_j(x)| < M, \quad \text{for all } x \in \mathbb{R} \text{ and all } j.
\end{equation}
Moreover, since $\tilde{f}' = f'$ on $[0,1]$, it follows that $g_j$ converges uniformly to $f'$ on $[0,1]$. Furthermore, we have
\begin{eqnarray}\label{eq:g_j Lipschitz-continuous}
		\left|g_j(x_1) - g_j(x_2)\right|
		&=& \left|\int_{\mathbb{R}} \left(\tilde{f'}(x_1 - z) - \tilde{f'}(x_2 - z)\right) \eta_{\frac{1}{j}}(z) \mathrm{d} z\right| \nonumber\\
		&\leq& \int_{\mathbb{R}} \left|\tilde{f'}(x_1 - z) - \tilde{f'}(x_2 - z)\right| \eta_{\frac{1}{j}}(z) \mathrm{d} z \nonumber\\
		&\leq & L \left|x_1 - x_2\right| \text{.}
\end{eqnarray}
This implies that $g_j(x)$ is Lipschitz-continuous with a Lipschitz constant $L$ independent of $j$. Furthermore, from \eqref{eq:g_j Lipschitz-continuous}, it is evident that
\begin{equation}\label{eq:bound of derivative of g_j}
	|g'_j(x)| \leq L.
\end{equation}

Define $f_j(x) = \int_0^x g_j(t)  dt + f(0)$. Then we have
\begin{equation*}
	|f(x) - f_j(x)| = \left| \int_0^x [f'(t) - g_j(t)]  dt \right|.
\end{equation*}
Since $g_j$ converges uniformly to $f'$ on $[0,1]$, it follows that $f_j$ converges uniformly to $f$ on $[0,1]$. Moreover, by \eqref{eq:bound for g_j} and \eqref{eq:bound of derivative of g_j}, both the first and second derivatives of $f_j$ are uniformly bounded.

Step 2. Assume that $\{f_j\}$ is a sequence in $C^2[0,1]$ converging uniformly to $f$, and that there exists a constant $M > 0$ such that
\begin{equation}\label{eq:bound_of_derivatives}
	|f_j'(x)| < M \quad \text{and} \quad |f_j''(x)| < M, \quad \text{for all } x \in [0,1] \text{ and } j \in \mathbb{N}.
\end{equation}
The above bounds demonstrate that the sequence $\{f_j'\}$ is uniformly bounded and equicontinuous. By the Arzelà--Ascoli theorem (see \cite{Lax14}), there exists a subsequence $\{f_{n_j}'\}$ that converges uniformly to a function $g$. Since each $f_{n_j}'$ is continuous and uniformly bounded, the limit function $g$ is continuous as well. Thus, we have:
\begin{equation}\label{eq:approximation_derivative_approximation}	
		f_{n_j}' \rightarrow g~(\text{uniformly}),\quad
		f_{n_j} \rightarrow f~(\text{uniformly}), \quad
		f_{n_j}(x) = \displaystyle\int_0^x f_{n_j}'(t)  dt + f_{n_j}(0).
\end{equation}
Taking the limit as $j \to \infty$ in the third equation yields
\begin{equation*}
f(x) = \int_0^x g(t)  dt + f(0),
\end{equation*}
which shows that $f$ is differentiable with $f' = g$. Since $g \in C[0,1]$, $f \in C^{1}[0,1]$.
	
On the other hand, using \eqref{eq:bound_of_derivatives} and \eqref{eq:approximation_derivative_approximation}, we have
\begin{equation*}
\begin{aligned}
	\left| f'(x_1) - f'(x_2) \right| & \leq \left| f'_{n_j}(x_1) - f'(x_1) \right| + \left| f'_{n_j}(x_1) - f'_{n_j}(x_2) \right| + \left| f'_{n_j}(x_2) - f'(x_2) \right| \\
	& \leq \left| f'_{n_j}(x_1) - f'(x_1) \right| + M \left| x_1 - x_2 \right| + \left| f'_{n_j}(x_2) - f'(x_2) \right|,
\end{aligned}
\end{equation*}
which shows that $f'$ is Lipschitz continuous.
\end{proof}


\begin{lemma}\label{lem:estimation solution C1}
	Let $n\in C^{1,1}[0,1]$, the solution $ y $ of the equations \eqref{eq:original_ode} satisfies the following estimates:
	\begin{equation}\label{eq:y estimation C1}
		\left\{
		\begin{aligned}
			&\left| y(r) - \frac{1}{\left[n(0)n(r)\right]^{\frac{1}{4}} k} \sin\left(k \int_{0}^{r} \sqrt{n(t)} \mathrm{d} t\right) \right| \leq \frac{C}{|k|^2} \exp\left(\left|\Im k\right| \int_{0}^{r} \sqrt{n(t)} \mathrm{d} t\right), \text{~}\\
			&\left| y'(r) - \left[\frac{n(r)}{n(0)}\right]^{\frac{1}{4}} \cos\left(k \int_{0}^{r} \sqrt{n(t)} \mathrm{d} t\right) \right| \leq \frac{C}{|k|} \exp\left(\left|\Im k\right| \int_{0}^{r} \sqrt{n(t)} \mathrm{d} t\right). \text{~}\\
		\end{aligned}
		\right.
	\end{equation}
\end{lemma}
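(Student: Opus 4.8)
The plan is to prove the estimate first for smooth refractive indices, where the Liouville transform reduces \eqref{eq:original_ode} to a Schr\"odinger equation with an integrable potential and the classical Pöschel--Trubowitz asymptotics apply, and then to descend to the $C^{1,1}$ case by the approximation furnished by Lemma~\ref{lem:C2 approximate C1}.

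\emph{Step 1: the $C^2$ case.} Assume temporarily $n\in C^2[0,1]$ with $0<n_*<n(r)<n^*$ and $|n'|,|n''|\le M$. Apply the $L=1$ Liouville transform of Section~\ref{sec:Liouville transform}: with $\xi(r)=\int_0^r\sqrt{n(t)}\,dt$ and $z(\xi(r))=[n(r)]^{1/4}y(r)$, the function $z$ solves $z''+(k^2-p(\xi))z=0$ on $(0,\delta_1)$ with $p=\frac{n''}{4n^2}-\frac{5(n')^2}{16n^3}$, while the initial data $y(0)=0$, $y'(0)=1$ become $z(0)=0$, $z'(0)=[n(0)]^{-1/4}$. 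Since $\|p\|_{L^\infty[0,1]}\le \frac{M}{4n_*^2}+\frac{5M^2}{16n_*^3}=:P<\infty$, Theorem~3 in Chapter~1 of \cite{PT87} (see also \cite{AGP11}) gives, for $|k|\ge 1$,
\begin{equation*}
\left| z(\xi)-\frac{\sin(k\xi)}{[n(0)]^{1/4}k}\right|\le \frac{C}{|k|^2}\,e^{|\Im k|\xi},\qquad
\left| z'(\xi)-\frac{\cos(k\xi)}{[n(0)]^{1/4}}\right|\le \frac{C}{|k|}\,e^{|\Im k|\xi},
\end{equation*}
with $C$ depending only on $P$, hence only on $n_*$ and $M$. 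Substituting back $y=[n(r)]^{-1/4}z(\xi(r))$ and $y'=-\tfrac14 n^{-5/4}n'\,z(\xi(r))+[n(r)]^{1/4}z'(\xi(r))$, and using that $n^{-1/4}$, $n^{-5/4}n'$, $n^{1/4}$ are bounded by constants depending only on $n_*,n^*,M$ (so that the extra term $-\tfrac14 n^{-5/4}n'z(\xi)=O(|k|^{-1})e^{|\Im k|\xi}$ is absorbed into the error), yields \eqref{eq:y estimation C1} with a constant $C=C(n_*,n^*,M)$ that does not depend on any finer feature of $n$.

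\emph{Step 2: passage to the limit.} Now let $n\in C^{1,1}[0,1]$ with $0<n_*<n(r),n'(r)<n^*$. By Lemma~\ref{lem:C2 approximate C1} there are $n_j\in C^2[0,1]$ with $n_j\to n$ uniformly and $|n_j'|,|n_j''|\le M$ for all $j$; for $j$ large we may also assume $n_j\ge n_*/2>0$. Let $y_j$ solve \eqref{eq:original_ode} with $n$ replaced by $n_j$, for the same $k$ and the same initial data. Step~1 applies to every such $y_j$ with one common constant $C=C(n_*,n^*,M)$. Fix $k$ with $|k|\ge 1$. Since $y_j'(r)=1-k^2\int_0^r n_j(s)y_j(s)\,ds$, a standard Gronwall argument using $\|n_j-n\|_{L^\infty}\to 0$ gives $y_j\to y$ and $y_j'\to y'$ uniformly on $[0,1]$; likewise $\int_0^r\sqrt{n_j}\to\int_0^r\sqrt{n}$, $[n_j(0)n_j(r)]^{-1/4}\to[n(0)n(r)]^{-1/4}$ and $[n_j(r)/n_j(0)]^{1/4}\to[n(r)/n(0)]^{1/4}$ uniformly in $r$. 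Passing to the limit $j\to\infty$ in the two inequalities of Step~1 for $y_j$ establishes \eqref{eq:y estimation C1} for $y$.

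\emph{The main obstacle.} The delicate point is the uniformity in Step~1: one must confirm that the $O(\cdot)$ constants in the Pöschel--Trubowitz asymptotics for $z,z'$ depend on the potential only through $\|p\|_{L^1}$ (equivalently through $n_*$ and $M$), and not through $\|n''\|_{\infty}$ or any modulus of continuity of $n''$. This is precisely why one works with the $L^1$-potential version of the asymptotics rather than a $C^1$- or $C^2$-potential version: although $n_j''$ need not converge as $j\to\infty$, the bound $\|p_j\|_{L^1}\le P$ is uniform, so the estimate survives the limit. The remaining ingredient---continuous dependence $y_j\to y$, $y_j'\to y'$ with uniformity in $r$---is routine because $k$ is held fixed and the equation is linear. (Alternatively, one could run Step~1 directly on $n\in C^{1,1}$, noting that $n''$ then exists a.e. and lies in $L^\infty\subset L^1$, so $p\in L^1$ and the same Pöschel--Trubowitz theory applies verbatim; the approximation route is merely the most transparent way to avoid a.e.-defined potentials.)
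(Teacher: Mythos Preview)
Your proposal is correct and follows essentially the same route as the paper: approximate $n\in C^{1,1}$ by $C^2$ functions with uniformly bounded first and second derivatives via Lemma~\ref{lem:C2 approximate C1}, invoke the known $C^2$ asymptotics from \cite{PT87,AGP11}, and pass to the limit using that the constants depend only on the derivative bounds. The paper's own proof is extremely terse (three sentences), while you have spelled out both the Liouville-transform computation in Step~1 and the continuous-dependence argument in Step~2; in particular, your explicit identification of the ``main obstacle''---that the P\"oschel--Trubowitz constant depends on $p$ only through an $L^1$-type bound, hence only on $n_*$ and $M$---is exactly the point the paper compresses into the phrase ``the uniform boundedness of these derivatives completes the proof.''
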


\begin{proof}
If $n \in C^{1,1}[0,1]$, then Lemma~\ref{lem:C2 approximate C1} guarantees the existence of an approximating sequence $\{n_j\} \subset C^2[0,1]$ that converges uniformly to $n$ and has uniformly bounded first and second derivatives. The estimates required by this lemma for each $n_j$ follow directly from \cite{AGP11,PT87}. The uniform boundedness of these derivatives completes the proof.
\end{proof}


By substituting the estimates for $ y $ and $ y' $ from \eqref{eq:y estimation C1} into the expression for $ d(k) $ given in \eqref{eq:d(k)}, we can immediately estimate the characteristic function $ d(k) $.

\begin{lemma}\label{lem:d(k) estimation C1}
	If $ n\in C^{1,1}[0,1] $, the following estimates hold:
\begin{equation}\label{eq:inequalites d(k) C1}
D(d(k))=\tilde{A}_1\frac{\sin{((1+\delta)k)}}{k}+\tilde{A}_2\frac{\sin{((1-\delta)k)}}{k}, \quad
|R(d(k))|<\frac{C_1}{|k|^2} \exp{\left\{(1+\delta)\left|\Im k\right|\right\}},
\end{equation}
where
\begin{equation*}
\tilde{A}_1=\frac{1}{2\left[n(0)\right]^{\frac{1}{4}}}\left(\left[n(1)\right]^{\frac{1}{4}}-\frac{1}{\left[n(1)\right]^{\frac{1}{4}}}\right), \quad \tilde{A}_2=\frac{1}{2\left[n(0)\right]^{\frac{1}{4}}}\left(\left[n(1)\right]^{\frac{1}{4}}+\frac{1}{\left[n(1)\right]^{\frac{1}{4}}}\right).
\end{equation*}
\end{lemma}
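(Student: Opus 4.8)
The plan is to derive \eqref{eq:inequalites d(k) C1} by feeding the asymptotic formulas of Lemma~\ref{lem:estimation solution C1} into the definition \eqref{eq:d(k)}. Expanding the determinant in \eqref{eq:d(k)} gives the exact identity
\begin{equation*}
d(k)=\frac{\sin k}{k}\,y'(1)-y(1)\cos k .
\end{equation*}
Writing $\delta=\int_0^1\sqrt{n(t)}\,dt$, Lemma~\ref{lem:estimation solution C1} provides, for $|k|\ge 1$, the splittings
\begin{equation*}
y(1)=\frac{1}{[n(0)n(1)]^{1/4}}\,\frac{\sin(k\delta)}{k}+\rho_1(k),\quad
y'(1)=\Bigl[\tfrac{n(1)}{n(0)}\Bigr]^{1/4}\cos(k\delta)+\rho_2(k),
\end{equation*}
with $|\rho_1(k)|\le C|k|^{-2}e^{|\Im k|\delta}$ and $|\rho_2(k)|\le C|k|^{-1}e^{|\Im k|\delta}$.

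First I would isolate the leading part of $d(k)$ by inserting only the principal terms of $y(1)$ and $y'(1)$:
\begin{equation*}
D(d(k))=\Bigl[\tfrac{n(1)}{n(0)}\Bigr]^{1/4}\frac{\sin k\cos(k\delta)}{k}-\frac{1}{[n(0)n(1)]^{1/4}}\,\frac{\cos k\sin(k\delta)}{k}.
\end{equation*}
Applying the product-to-sum identities $\sin k\cos(k\delta)=\tfrac12[\sin((1+\delta)k)+\sin((1-\delta)k)]$ and $\cos k\sin(k\delta)=\tfrac12[\sin((1+\delta)k)-\sin((1-\delta)k)]$, then collecting the coefficients of $\sin((1+\delta)k)$ and $\sin((1-\delta)k)$ and using $[n(1)/n(0)]^{1/4}\pm[n(0)n(1)]^{-1/4}=[n(0)]^{-1/4}\bigl([n(1)]^{1/4}\pm[n(1)]^{-1/4}\bigr)$, I would recover exactly the constants $\tilde A_1,\tilde A_2$ displayed in the lemma, which is the first formula in \eqref{eq:inequalites d(k) C1}.

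Second, the remainder is $R(d(k))=d(k)-D(d(k))=\frac{\sin k}{k}\rho_2(k)-\cos k\,\rho_1(k)$. Using $|\sin k|\le e^{|\Im k|}$ and $|\cos k|\le e^{|\Im k|}$ together with the bounds on $\rho_1,\rho_2$ and the combination $e^{|\Im k|}\cdot e^{|\Im k|\delta}=e^{(1+\delta)|\Im k|}$ gives
\begin{equation*}
|R(d(k))|\le \frac{e^{|\Im k|}}{|k|}\cdot\frac{C}{|k|}e^{|\Im k|\delta}+e^{|\Im k|}\cdot\frac{C}{|k|^2}e^{|\Im k|\delta}\le \frac{C_1}{|k|^2}\,e^{(1+\delta)|\Im k|},
\end{equation*}
so that $C_1=2C$ suffices; since the constant $C$ of Lemma~\ref{lem:estimation solution C1} depends only on $n$ (it is inherited, through Lemma~\ref{lem:C2 approximate C1}, from the uniform bounds on a $C^2$-approximating sequence), so does $C_1$.

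There is no genuinely hard step once Lemma~\ref{lem:estimation solution C1} is in hand: the argument is a direct substitution followed by an elementary trigonometric rearrangement. The only point requiring care is bookkeeping — ensuring that the split $d(k)=D(d(k))+R(d(k))$ matches the convention used in Section~\ref{sec:thm2.1}, so that $D(d(k))$ is the two-term trigonometric polynomial divided by $k$ with exponential growth rate exactly $1+\delta=\tilde\beta_1$, while $R(d(k))$ genuinely carries the $O(|k|^{-2})$ decay with the same growth rate. This is precisely the structure needed for the contour estimates in the proof of Theorem~\ref{thm:uniqueness_C1}.
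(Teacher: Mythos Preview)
Your proposal is correct and follows exactly the approach the paper indicates: the paper's proof consists solely of the remark that the lemma follows immediately by substituting the estimates \eqref{eq:y estimation C1} for $y(1)$ and $y'(1)$ into the definition \eqref{eq:d(k)}, and you have carried out precisely that substitution together with the product-to-sum identities and the remainder bound.
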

~\\


Now, we are at the stage to prove Theorem \ref{thm:uniqueness_C1}.~

\begin{proof}
Let $\tau$ denote the density of special transmission eigenvalues, which corresponds to the density of zeros of $d(k)$. By the Cartwright-Levinson Theorem (\cite{CCH16}), the density of zeros of $d(k)$ is given by $\frac{2(1+\delta)}{\pi}$. Thus, we have the relation
\begin{equation*}
	\frac{2(1+\delta)}{\pi} = \tau,
\end{equation*}
which means that given the density $\tau$ (or $\delta$), we need to determine the piecewise function $n$.	
	
Next, it follows from Lemma~\ref{lem:estimation solution C1} and the bounds $0 < n_{*} < n(r) < n^{*}$ that for $n(r) \in C^{1,1}[0,1]$, we obtain
\begin{eqnarray}\label{eq:y(r) final estimation inequality C1}
		\left|y(r)\right| & \leq & C\left\{\frac{1}{|k|}+\frac{1}{|k|^2} \right\}\exp\left(\left|\Im k\right|\int_{0}^{r}\sqrt{n(t)}\mathrm{d} t\right),
\end{eqnarray}
where $C$ is a generic constant. Then, using \eqref{eq:y(r) final estimation inequality C1} and \eqref{eq:def f g}, we derive
\begin{eqnarray*}
		\left|f(k)\right|& \leq & |k|^2C^2\left\{\frac{1}{|k|}+\frac{1}{|k|^2} \right\}^2\exp\left(2\left|\Im k\right|\int_{0}^{r}\sqrt{n(t)}\mathrm{d} t\right)\int_0^1\left|n^{(2)}(r)-n^{(1)}(r)\right|\mathrm{d}r \nonumber\\
		&\leq & C\exp\left\{(1+\delta_L-\varepsilon)\left|\Im k\right|\right\}.
\end{eqnarray*}
	
Furthermore, for $D(d(k))$, we have (\cite{MP94}):
\begin{equation*}
	\left|\sin((1+\delta)k) \right| > \frac{1}{4}\exp\left\{\left|\Im k\right|\left(1+\delta\right) \right\}, \quad \text{when } |k| = \frac{\left(N+\frac{1}{2}\right)\pi}{1+\delta}, \, N \in \mathbb{Z},
\end{equation*}
and
\begin{equation}\label{eq:estimation of A_1 and A_2}
	\begin{aligned}
		\left|\tilde{A}_1\right|>M_1,\quad	\left|\tilde{A}_2\right|<{M}_2,\\
	\end{aligned}
\end{equation}
where $M_1=\frac{1}{2{n^{*}}^{\frac{1}{4}}}\left|{[n(1)]}^{\frac{1}{4}}-\frac{1}{{[n(1)]}^{\frac{1}{4}}}\right|$ and $M_2=\frac{1}{2{n_{*}}^{\frac{1}{4}}}\max{\left\{{n_{*}}^{\frac{1}{4}}+\frac{1}{{n_{*}}^{\frac{1}{4}}},{n^{*}}^{\frac{1}{4}}+\frac{1}{{n^{*}}^{\frac{1}{4}}} \right\} }$.


Case $1$. $(1+\delta)$ and $(1-\delta)$ are rational linearly independent.\\
Recall the constant $T_1$ in Theorem\,\ref{thm:uniqueness} and let
\begin{equation}\label{eq:C1 independent set up notations}
		\alpha_1=\frac{1}{4}, \quad \alpha_2=0,~\varepsilon=\min{\left\{\frac{1}{12},\frac{1}{2\pi}\arcsin{\frac{M_1}{12M_2}}\right\}}, \quad C_2=\frac{\ln{12}+\ln{\frac{M_2}{M_1}}}{2\delta_L}.
\end{equation}

Since $(1+\delta)$ and $(1-\delta)$ are rationally linearly independent, it immediately follows that $\frac{1+\delta}{2\pi}$ and $\frac{1-\delta}{2\pi}$ are also rationally linearly independent. From Lemma~\ref{lem:Kronecker_approximation}, there exist $t_1 > T_1$ and integers $p_1, p_2 \in \mathbb{Z}$ such that
\begin{eqnarray}\label{eq:C1 independent Kronecker angle estimation}
		\left|\left(1+\delta\right)t_1-2\pi p_1-2\pi\alpha_1\right|<2\pi\varepsilon, \quad
		 \left|\left(1-\delta\right)t_1-2\pi p_2-2\pi\alpha_2 \right|<2\pi\varepsilon.
\end{eqnarray}

Now choose
\begin{equation*}
		N_1=\left[\frac{1+\delta}{\pi}\sqrt{C_2^2+t_1^2}-\frac{1}{2}\right]+1.
\end{equation*}
Consider the circle $|k| = \frac{(N_1 + \frac{1}{2})\pi}{1+\delta}$. This circle intersects the vertical line $\Re k = -t_1$ at two points, which we denote (from bottom to top) as $E_1$ and $F_1$. Similarly, it intersects the vertical line $\Re k = t_1$ at points $E_2$ and $F_2$ (also from bottom to top). Then the simple closed curve $\gamma_1$ consists of circular arcs $\stackrel{\frown}{E_1E_2}$ and $\stackrel{\frown}{F_1F_2}$, and line segments $\overline{E_1F_1}$ and $\overline{E_2F_2}$.

It remains to estimate $d(k)$ on the line segments $\overline{E_1F_1}$ and $\overline{E_2F_2}$. From \eqref{eq:C1 independent set up notations} and \eqref{eq:C1 independent Kronecker angle estimation}, we directly obtain
\begin{equation*}
|\left(1+\delta\right)t_1 - (2p_1\pi +\frac{\pi}{2})| \leq \frac{\pi}{6}, \quad
|\left(1-\delta\right)t_1-2p_2\pi | \leq  2\pi \varepsilon.
\end{equation*}
From the above inequalities, we have
\begin{equation*}
	\left|\sin(\left(1-\delta\right)t_1)\right|\leq \frac{M_1}{12M_2},
\end{equation*}
which, together with \eqref{eq:estimation of A_1 and A_2}, implies
\begin{equation}\label{eq:estimation for second part of the dominant term of d(k) in case1}
		\left|\frac{|\tilde{A}_2|}{|k|}\exp{\left\{(1-\delta)\left|\Im k\right|\right\}}\sin{((1-\delta)t_1)}\right|\leq \frac{M_1}{12|k|}\exp{\left\{(1+\delta)\left|\Im k\right|\right\}}.
\end{equation}
Moreover, on the segments $\overline{E_1F_1}$ and $\overline{E_2F_2}$, it follows from \eqref{eq:C1 independent set up notations} and the choice of $t_1$ that $|\Re (k)| = t_1 \geq T_1 \geq \frac{12C_1}{M_1}$. Therefore, we have $|k| \geq |\operatorname{Re}(k)| = t_1 \geq \frac{12C_1}{M_1}$. Consequently, it follows from Lemma\,\ref{lem:d(k) estimation C1} that
\begin{equation}\label{eq:independent case estimation for remainder term}
|R(d(k))| \leq	\left|\frac{C_1}{|k|^2}\exp{\left\{(1+\delta)\left|\Im k\right|\right\}}\right|\leq \frac{M_1}{12|k|}\exp{\left\{(1+\delta)\left|\Im k\right|\right\}}.
\end{equation}
		
Then, using \eqref{eq:expression of d(k) on arc independent}, \eqref{eq:bounds for real part of sin tilde beta_j k}, \eqref{eq:estimation of A_1 and A_2}, (\ref{eq:estimation for second part of the dominant term of d(k) in case1}), (\ref{eq:independent case estimation for remainder term}), and the triangle inequality, we find that on the line segments $\overline{E_1F_1}$ and $\overline{E_2F_2}$,
\begin{eqnarray*}
		|d(k)|	&\geq &\left|\frac{|\tilde{A}_1|}{2|k|}\exp{\left\{(1+\delta)\left|\Im k\right|\right\}}\sin{\left[(1+\delta)\Re{k}\right]}-\frac{|\tilde{A}_2|}{|k|}\exp{\left\{(1-\delta)\left|\Im k\right|\right\}}\sin{\left[(1-\delta)\Re{k}\right]}\right. \nonumber \\
		&&-\left.|R(d(k))| \right| \nonumber\\
		&\geq &\left|\frac{M_1}{4|k|}\exp{\left\{(1+\delta)\left|\Im k\right|\right\}}-\frac{M_2}{|k|}\exp{\left\{(1+\delta)\left|\Im k\right|\right\}}\frac{M_1}{12M_2}-\frac{C_1}{|k|^2}\exp{\left\{(1+\delta)\left|\Im k\right|\right\}}\right| \nonumber\\
		&\geq &\frac{M_1}{12|k|}\exp{\left\{(1+\delta)\left|\Im k\right|\right\}}.
\end{eqnarray*}
Therefore, $d(k)$ has a positive lower bound on the line segments $\overline{E_1F_1}$ and $\overline{E_2F_2}$.\\
	

Case $2$. $(1+\delta)$ and $(1-\delta)$ are rational linearly dependent.~

We only consider $n(1) > 1$ and $\delta > 1$. By Lemma~\ref{lem:d(k) estimation C1}, we have 
\begin{equation}\label{eq:A1A2>0}
\tilde{A}_1 > 0, \quad \tilde{A}_2 > 0. 
\end{equation}
Therefore, there exist $\hat{p}, \hat{q} \in \mathbb{Z}$ with $\hat{q} < 0$ and $0 < |\hat{q}| < \hat{p}$ such that
\begin{equation}\label{eq:C1 delta_m<1 linear relations}
	\begin{aligned}
		1-\delta=\frac{\hat{q}}{\hat{p}}\left(1+\delta\right).
	\end{aligned}
\end{equation}
Choose
\begin{eqnarray}\label{eq:C1 dependent case delta<1 n(1)<1 set up}
		\alpha_1&=&1+\frac{1}{\hat{p}+|\hat{q}|},~ \varepsilon=\min\left\{\frac{1}{2(\hat{p}+|\hat{q}|)},\frac{4|\hat{q}|(\hat{p}-|\hat{q}|)}{\hat{p}+|\hat{q}|}, \frac{4\hat{p}(\hat{p}-|\hat{q}|)}{\hat{p}+|\hat{q}|},\frac{|\hat{q}|}{2\hat{p}(\hat{p}+|\hat{q})|}\right\}, \nonumber\\
		\widetilde{M}_1&=&\min\left\{\left|\sin\left(\frac{\hat{p}-|\hat{q}|}{\hat{p}+|\hat{q}|}\pi+2\hat{p}\varepsilon\right)\right|,~ \left|\sin\left(\frac{\hat{p}-|\hat{q}|}{\hat{p}+|\hat{q}|}\pi-2\hat{p}\varepsilon\right)\right|\right\},\\
		\widetilde{T}_1&=&\max\left\{\frac{12C_1}{M_1}+1,\frac{4C_1}{M_1\widetilde{M}_1}+1\right\},\nonumber
\end{eqnarray}
where $C_1$ is defined in \eqref{eq:R(k)}.

From Lemma~\ref{lem:Kronecker_approximation} and \eqref{eq:C1 delta_m<1 linear relations},~there are  it follows immediately that there exist $\tilde{t}_1 > \widetilde{T}_1$ and $\tilde{p} \in \mathbb{Z}$ such that
\begin{eqnarray*}
	\left|\left(1+\delta\right)\tilde{t}_1-2\hat{p}\tilde{p}\pi-2\hat{p}\pi\alpha_1\right| \leq 2\hat{p}\pi\varepsilon, \quad  \left| \left(1-\delta\right)\tilde{t}_1-2\hat{q}\tilde{p}\pi-2\hat{q}\pi\alpha_1 \right| \leq 2|\hat{q}|\pi\varepsilon.
\end{eqnarray*}

Denote by $\widetilde{N}_1=\left[\frac{1+\delta}{\pi}\sqrt{\tilde{t}_1^2+C_2^2}-\frac{1}{2}\right]+1$, where $C_2$ is given by (\ref{eq:C1 independent set up notations}). The circle $|k|=\frac{\widetilde{N}_1+\frac{1}{2}}{1+\delta}\pi$ intersects the line segment $\Re k=-\tilde{t}_1$ at points $\widetilde{E}_1=\left(-\tilde{t}_1,-\sqrt{\left(\frac{\widetilde{N}_1+\frac{1}{2}}{1+\delta}\pi\right)^2-\tilde{t}_1^2}\right)$ and $\widetilde{F}_1=\left(-\tilde{t}_1,\sqrt{\left(\frac{\widetilde{N}_1+\frac{1}{2}}{1+\delta}\pi\right)^2-\tilde{t}_1^2}\right)$, and intersects the line segment $\Re k=\tilde{t}_1$ at points $\widetilde{E}_2=\left(\tilde{t}_1,-\sqrt{\left(\frac{\widetilde{N}_1+\frac{1}{2}}{1+\delta}\pi\right)^2-\tilde{t}_1^2}\right)$ and $\widetilde{F}_2=\left(\tilde{t}_1,\sqrt{\left(\frac{\widetilde{N}_1+\frac{1}{2}}{1+\delta}\pi\right)^2-\tilde{t}_1^2}\right)$. Then the simple closed curve $\tilde{\gamma}_1$ consists of circular arcs $\stackrel{\frown}{\widetilde{E}_1\widetilde{E}_2}$ and $\stackrel{\frown}{\widetilde{F}_1\widetilde{F}_2}$, and line segments $\overline{\widetilde{E}_1\widetilde{F}_1}$ and $\overline{\widetilde{E}_2\widetilde{F}_2}$.
	
It remains to estimate $d(k)$ on the line segments $\overline{\widetilde{E}_1\widetilde{F}_1}$ and $\overline{\widetilde{E}_2\widetilde{F}_2}$.
From the definition of $\alpha_1$ and $\varepsilon$ in \eqref{eq:C1 dependent case delta<1 n(1)<1 set up}, we have
\begin{equation*}
\left|\left(1+\delta\right)\tilde{t}_1-\left(2\hat{p}\tilde{p}\pi+\pi+\frac{\hat{p}-|\hat{q}|}{\hat{p}+|\hat{q}|}\pi\right)\right|\leq 2\hat{p}\pi\varepsilon, \quad
\left|\left(1-\delta\right)\tilde{t}_1-\left(-2|\hat{q}|\tilde{p}\pi-\pi+\frac{\hat{p}-|\hat{q}|}{\hat{p}+|\hat{q}|}\pi\right)\right|\leq 2|\hat{q}|\pi\varepsilon,
\end{equation*}
which, together with the choice of $\varepsilon$, implies that
\begin{equation}\label{eq:estimation of 1delta}
\sin((1+\delta)\tilde{t}_1) \cdot \sin((1-\delta)\tilde{t}_1) > 0.
\end{equation}

Moreover, from \eqref{eq:C1 dependent case delta<1 n(1)<1 set up},~it is evident that
	\begin{equation}\label{eq:estimation of sin 1+delta t_1 delta>1 n(1)>1}
		|\sin\left(\left(1+\delta\right)\tilde{t}_1\right)| \geq \widetilde{M}_1.
	\end{equation}
On the segments $\overline{\widetilde{E}_1\widetilde{F}_1}$ and $\overline{\widetilde{E}_2\widetilde{F}_2}$, we have $|k| \geq |\Re k|=\tilde{t}_1 \geq T_0 \geq \frac{4C_1}{M_1\widetilde{M}_1}$, then
\begin{equation}\label{eq:estimation of the remainder term of d(k) case 2b}
	\begin{aligned}
		\left|\frac{C_1}{|k|^2}\exp{\left\{(1+\delta)\left|\Im k\right|\right\}}\right| \leq \frac{M_1\widetilde{M}_1}{4|k|}\exp{\left\{(1+\delta)\left|\Im k\right|\right\}}.~
	\end{aligned}
\end{equation}
From Lemma\,\ref{lem:d(k) estimation C1}, \eqref{eq:bounds for real part of sin tilde beta_j k}, (\ref{eq:A1A2>0}), (\ref{eq:estimation of 1delta}), (\ref{eq:estimation of sin 1+delta t_1 delta>1 n(1)>1}), and \eqref{eq:estimation of the remainder term of d(k) case 2b}, we find that on the line segments  $\overline{\widetilde{E}_1\widetilde{F}_1}$ and $\overline{\widetilde{E}_2\widetilde{F}_2}$,
\begin{equation*}
	\begin{aligned}
		\left|d(k)\right|& \geq \left|\frac{\tilde{A}_1}{2|k|}\exp{\left\{(1+\delta)\left|\Im k\right|\right\} }\sin{((1+\delta)\tilde{t}_1)}+\frac{\tilde{A}_2}{2|k|}\exp{\left\{(1-\delta)\left|\Im k\right|\right\} }\sin{((1-\delta)\tilde{t}_1)}-|R(d(k))|\right|\\
		& \geq \left|\frac{M_1\tilde{M_1}}{2|k|}\exp{\left\{(1+\delta)\left|\Im k\right|\right\}}-\frac{C_1}{|k|^2}\exp{\left\{(1+\delta)\left|\Im k\right|\right\} }\right|\\
		& \geq \left|\frac{M_1\tilde{M_1}}{4|k|}\exp{\left\{(1+\delta)\left|\Im k\right|\right\} }\right|.~
	\end{aligned}
\end{equation*}
Therefore, $d(k)$ has a positive lower bound on the line segments $\overline{\widetilde{E}_1\widetilde{F}_1}$ and $\overline{\widetilde{E}_2\widetilde{F}_2}$.

The remaining arguments parallel those in proof of Theorem\,\ref{thm:uniqueness}, which completes the proof.

\end{proof}


\section{Conclusion and Future Work}\label{conclusion}

In this paper, we study the unique determination of the refractive index $n$ using a special transmission eigenvalue combined with the values of $n$ on a subinterval. We consider two regularity classes: $n \in C_{p}^{2}[0,1] \cap \mathcal{M}$ or $n \in C^{1,1}[0,1] \cap \mathcal{M}$.

Our future goals are twofold: first, to establish uniqueness with minimal additional information---for instance, using only the transmission eigenvalue---and second, to further relax the regularity assumptions on $n$, such as considering $n \in L^{\infty}[0,1]$.


\section*{Acknowledgment}
The work of Y. Jiang is supported in part by the National Key R\&D Program of China (2024YFA1012302) and China Natural National Science Foundation (No. 123B2017). The work of K. Zhang is supported in part by China Natural National Science Foundation (Grant No. 12271207).

\end{document}